\theoremstyle{plain}
\newtheorem{theorem}{Theorem}[section]
\newtheorem{thm}[theorem]{Theorem}
\newtheorem{cor}[theorem]{Corollary}
\newtheorem{lem}[theorem]{Lemma}
\newtheorem{prop}[theorem]{Proposition}
\newcounter{kludge}
\newcounter{kludgeb}
\theoremstyle{definition}
\newtheorem{defn}[theorem]{Definition}
\newtheorem{rmk}[theorem]{Remark}
\theoremstyle{remark}
\newcommand{\marpar}[1]{}
\newcommand{\mni}{\medskip\noindent}
\newcommand{\mbb}{\mathbb}
\newcommand{\QQ}{\mbb{Q}}
\newcommand{\ZZ}{\mbb{Z}}
\newcommand{\CC}{\mbb{C}}
\newcommand{\AAA}{\mbb{A}}
\newcommand{\PP}{\mbb{P}}
\newcommand{\mc}{\mathcal}
\newcommand{\mf}{\mathfrak}
\newcommand{\OO}{\mc{O}}
\newcommand{\wh}{\widehat}
\newcommand{\wt}{\widetilde}
\newcommand{\ol}{\overline}
\newcommand{\SP}{\text{Spec }}
\newcommand{\M}{\overline{\mc{M}}}
\newcommand{\Kgnb}[1]{\M_{#1}}
\newcommand{\Pic}[2]{\text{Pic}^{#1}_{#2}}
\newcommand{\Xx}{X}
\newcommand{\Ll}{\mathcal{L}}
\newcommand{\Cc}{C}
\newsavebox{\sembox}
\newlength{\semwidth}
\newlength{\boxwidth}
\newcommand{\Sem}[1]{%
\sbox{\sembox}{\ensuremath{#1}}%
\settowidth{\semwidth}{\usebox{\sembox}}%
\sbox{\sembox}{\ensuremath{\left[\usebox{\sembox}\right]}}%
\settowidth{\boxwidth}{\usebox{\sembox}}%
\addtolength{\boxwidth}{-\semwidth}%
\left[\hspace{-0.3\boxwidth}%
\usebox{\sembox}%
\hspace{-0.3\boxwidth}\right]%
}
\newsavebox{\semrbox}
\newlength{\semrwidth}
\newlength{\boxrwidth}
\newcommand{\Semr}[1]{%
\sbox{\semrbox}{\ensuremath{#1}}%
\settowidth{\semrwidth}{\usebox{\semrbox}}%
\sbox{\semrbox}{\ensuremath{\left(\usebox{\semrbox}\right)}}%
\settowidth{\boxrwidth}{\usebox{\semrbox}}%
\addtolength{\boxrwidth}{-\semrwidth}%
\left(\hspace{-0.3\boxrwidth}%
\usebox{\semrbox}%
\hspace{-0.3\boxrwidth}\right)%
}
\title
{Rationally simply connected varieties and pseudo algebraically closed fields} 
\author[Starr]{Jason Michael Starr}
\address{Department of Mathematics \\
  Stony Brook University \\ Stony Brook, NY 11794}
\email{jstarr@math.stonybrook.edu} 
\date{\today}
\begin{document}


\begin{abstract}
  The cohomological dimension of a field is the largest degree with
  non-vanishing Galois cohomology.  Serre's ``Conjecture II'' predicts
  that for every perfect field of cohomological dimension $2$, every torsor
  over the field for a semisimple, simply connected algebraic group is
  trivial.  
  A field is perfect and ``pseudo algebraically closed'' (PAC) if
  every geometrically irreducible curve over the field has a rational
  point.  These have cohomological dimension $1$. Every 
  transcendence degree $1$ extension of such a field has cohomological
  degree $2$.  We prove Serre's ``Conjecture II'' for such fields of
  cohomological degree $2$ provided either the field is of
  characteristic $0$ or the field contains primitive roots of unity
  for all orders $n$ prime to the characteristic.  The
  method uses ``rational simple connectedness'' in an essential way.
  With the same method, we prove that such fields are $C_2$-fields,
  and we
  prove that ``Period equals Index'' for the Brauer groups of such
  fields.  Finally, we use a similar method to reprove and extend a
  theorem of Fried-Jarden: every
  perfect PAC field of positive characteristic is $C_2$.
\end{abstract}


\maketitle



\section{Statement of Results} \label{sec-int}
\marpar{sec-int}

\mni
For a field $L$, the \emph{cohomological dimension} is the supremum
(possibly infinite)
over all integers $n$ such that there exists a discrete Galois module
with non-vanishing degree $n$ Galois cohomology.  For every finite
extension $L'/L$, the cohomological dimension of $L'$ is no greater
than the cohomological dimension of $L$, \cite[Proposition II.10,
p. 83]{GalCoh}. 
The cohomological dimension equals $0$ if and only if the field
is separably closed.  A \emph{Severi-Brauer variety} of dimension $n-1$ 
over $L$ is a
smooth, projective $L$-scheme $X$ such that $X\times_{\SP L}
\SP(L^{\text{sep}})$ is isomorphic to $\PP^{n-1}_{L^{\text{sep}}}$.
These are in bijection with the 
torsors over $L$ for the semisimple adjoint group
$\textbf{PGL}_n=\text{Aut}(\PP^{n-1})$, which is connected but not
simply connected,   
via the $L$-scheme of isomorphisms between $X$ and
$\PP^{n-1}_L$.   
The \emph{period}, or \emph{exponent}, 
of $X$ equals the smallest
integer $d>0$ such that there exists an invertible sheaf on $X$ whose
base change to $X\otimes_L L^{\text{sep}} \cong
\PP^{n-1}_{L^{\text{sep}}}$ 
is isomorphic to 
$\OO_{\PP^{n-1}_{L^{\text{sep}}}}(d)$.
The \emph{index} equals the smallest integer $m$ such that there
exists a closed subscheme $Y$ of $X$ whose base change in
$\PP^{n-1}_{L^{\text{sep}}}$ is a linear subvariety of
dimension $m-1$. The period divides the index, the index divides $n$,
and the period and index have the same prime factors \cite[Proposition
4.5.13]{GilleSzamuely}.  The
\emph{Period-Index Problem} asks for the smallest integer $e$
(assuming one exists), such that always the index divides the period
raised to the power $e$.

\mni
For
a perfect field $L$, the
cohomological dimension of $L$ is $\leq 1$ if and only if for every
finite separable extension $L'/L$, every Severi-Brauer variety over
$L'$ has period $1$, i.e., every $\textbf{PGL}_n$-torsor over $L'$ is  
trivial \cite[Proposition II.5, p. 78]{GalCoh}.  
For imperfect fields, typically this
condition is taken as the definition of \emph{dimension $\leq 1$}, as
opposed to ``cohomological dimension $\leq 1$''.  Serre formulated a
strong converse, ``Conjecture I'': for every field of dimension $\leq
1$, every torsor over $L$ for every semisimple and connected algebraic
group is 
trivial.  This was proved by Steinberg, \cite{Steinberg}.   
For a perfect field
$L$, by a theorem of Merkurjev-Suslin \cite[Corollary 24.9]{Suslin84}, 
the cohomological
dimension is $\leq 2$ if and only if all $\textbf{SL}_D$-torsors over $L$ 
are trivial for those semisimple
and \emph{simply connected} algebraic groups $\textbf{SL}_D$ arising
as inner forms of $\textbf{SL}_n$ over $L$.  Serre formulated a strong
converse, ``Conjecture II'':  for every perfect field of cohomological 
dimension
$\leq 2$, every torsor over $L$ for every semisimple and simply connected
algebraic group is trivial.  Serre also formulated a version of his
conjecture for imperfect fields of characteristic $p$ 
\cite[Section 5.5]{SerrePP}: 
Serre adds the hypotheses that
$[L:L^p]\leq p^2$ and that $H^3_p(F')$ is zero for all finite,
separable extensions $F'/F$.  

\mni
A field $L$ is \emph{perfect}, resp. \emph{perfect and
  pseudo-algebraically closed} (PAC), if every quasi-projective
$L$-scheme that is geometrically irreducible and zero-dimensional,
resp. one-dimensional, has an $L$-point.  
Since every perfect PAC field $L$ is
infinite, Bertini theorems imply that every quasi-projective
$L$-scheme $X$ that is geometrically irreducible contains a closed
subscheme that is geometrically
irreducible of dimension $1$ (or dimension $0$ if $X$ has dimension
$0$).  
Thus, for a perfect PAC field, every quasi-projective $L$-scheme $X$
that is geometrically irreducible has an
$L$-point.   
Via Weil's restriction
of scalars, every finite extension of a perfect PAC 
field is again a perfect PAC field.  Since Severi-Brauer varieties are
geometrically irreducible, every Severi-Brauer variety over a perfect
PAC field has a rational point, and thus it has index $1$ (so also it
has period $1$).  Therefore, every perfect PAC field has dimension
$\leq 1$.  Thus, every function field $K/L$ of transcendence degree $1$ over
a perfect PAC field $L$ has cohomological dimension $\leq 2$,
\cite[Proposition II.11, p. 83]{GalCoh}.
If $L$ has characteristic $p$, then $K$ is imperfect.  Nonetheless, 
$[K:K^p]$ equals $p$, so Serre's
modified version of ``Conjecture II'' predicts triviality for all 
$G$-torsors over $K$
for semisimple and simply connected algebraic groups $G$.  
A perfect PAC field is
\emph{nice} if either it has characteristic $0$ or if it has positive
characteristic $p$ and it contains a primitive root of unity of
order $n$ for every integer $n$ prime to $p$.  Jarden and Pop proved
the following theorem under the hypothesis that the field has
characteristic $0$ \emph{and} contains all roots of unity,
\cite{JardenPop}.  

\begin{thm} \label{thm-SerreIIpac} \marpar{thm-SerreIIpac}
For every perfect PAC field $L$ that either has characteristic zero or
contains a primitive root of unity of order $n$ for every integer $n$
prime to the characteristic $p$, for every function field
$K/L$ of transcendence degree $1$, every torsor over $K$ for every
semisimple and simply connected algebraic group is trivial.  
\end{thm}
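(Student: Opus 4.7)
\smallskip\noindent\textbf{Proof plan.}
The strategy is to convert triviality of $G$-torsors over $K$ into an existence-of-sections question for a twisted projective-homogeneous fibration over a curve, and then to use rational simple connectedness together with pseudo-algebraic-closure of $L$ to produce the section.

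First, I would write $K = L(\Cc)$ for a smooth projective geometrically irreducible curve $\Cc/L$. A given $G$-torsor $E$ over $K$ extends to a $G$-torsor $\mcE$ on a dense open subset of $\Cc$, and, using that $G$ is semisimple simply connected and $\Cc$ is a smooth curve, $\mcE$ can be extended to all of $\Cc$ (possibly after a birational modification) by standard results on extensions of torsors for simply connected groups over smooth curves. Next, choose a projective-homogeneous $G$-variety $Y = G/P$ that is rationally simply connected; such a $Y$ exists for every semisimple $G$, e.g.\ as a suitable generalized flag variety. Twisting yields a smooth projective fibration $\pi\colon \mcY := \mcE\times^G Y \to \Cc$, and a section of $\pi$ corresponds to a reduction of the structure group of $\mcE$ from $G$ to $P$.

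The technical heart is the next step. Consider the moduli space $M$ of sections of $\pi$ in a suitable numerical class. By the de Jong--Starr theory of rationally simply connected fibrations over curves, $M$ should contain an open subvariety $M^\circ$ that is smooth and \emph{geometrically irreducible} over $L$. Geometric irreducibility---which is stronger than the rational connectedness that suffices over $C_1$-fields such as $\CC$---is precisely the property compatible with PAC: since $L$ is perfect PAC and $M^\circ$ is geometrically irreducible and quasi-projective over $L$, it has an $L$-point, which yields a section of $\pi$ and hence reduces $\mcE$ to a $P$-torsor.

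Finally, one iterates. The unipotent radical of $P$ has vanishing $H^1$ over any field, so the $P$-reduction descends to a torsor for the Levi quotient of $P$, a reductive group of strictly smaller semisimple rank, and the parabolic-reduction step can be applied again. Repeating eventually reduces the statement to residual torus torsors and inner-form-of-$\SL{n}$ torsors: the latter are trivial by Merkurjev--Suslin (as recalled in the introduction, using $\operatorname{cd}(K)\leq 2$), while the former are handled by Hilbert 90 together with the ``period equals index'' statement of the abstract and the nice-PAC root-of-unity hypothesis, which is needed to trivialize the Galois action on the relevant character lattices. The main obstacle is the middle step: establishing that $M^\circ$ is geometrically irreducible over $L$, not merely rationally connected over $\ol L$. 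This descent to the $L$-structure is where the PAC hypothesis enters most essentially, being applied repeatedly to auxiliary geometrically irreducible varieties built from the geometry of the moduli of sections.
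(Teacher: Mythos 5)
Your plan captures the right general flavor (use rational simple connectedness to produce sections of a twisted projective-homogeneous fibration, then invoke the PAC property of $L$), but it has two genuine gaps, and a third issue that would require substantial additional argument.

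\textbf{Gap 1: geometric irreducibility of $M^\circ$ over $L$ is asserted, not proved, and the paper deliberately avoids proving it.} The de Jong--He--Starr theory produces a distinguished irreducible component $Z_e$ of the space of sections that is geometrically irreducible over $\ol L$; there is no reason this component is $L$-rational, i.e.\ Galois-stable, and no mechanism in your outline forces it to be. The paper circumvents this entirely: in characteristic $0$ it uses the ``RC solving'' property of perfect PAC fields (Theorems~\ref{thm-HX}, \ref{thm-StarrAx}), applied not over $L$ but over a prime-regular DVR $R$ with characteristic-$0$ fraction field, so that the irreducible component lives over $\operatorname{Frac}(R)$ and only its \emph{specialization} to $L$ must have a rational point; in positive characteristic it uses instead the notion of a \emph{PAC section} and the Bertini theorem (Theorem~\ref{thm-ffBertini}, Corollary~\ref{cor-ffBertini}) to descend from a one-dimensional linear section of the base where the classical (algebraically closed) surface case applies (Theorem~\ref{thm-redcharp}). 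Flagging the descent problem as ``where the PAC hypothesis enters most essentially'' correctly identifies the crux, but your proposal as written would need a new descent theorem, and the most natural such statement is not what the paper proves.

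\textbf{Gap 2: the parabolic iteration does not reach the general (anisotropic, non-quasi-split) case.} Your reduction $G \leadsto P \leadsto \text{Levi}$ only makes sense once a $K$-rational parabolic exists, and even if you use the twisted flag variety to produce a Borel reduction, the endpoint is a statement about quasi-split groups, not all inner and outer forms. The paper handles the general case by a completely different route: rational points on all minimal homogeneous varieties gives the \emph{split} case of Serre II (citing \cite{dJHS}), and the split case is then bootstrapped to the full case using the fact that every form of $E_8$ is inner and already split (since $\operatorname{Aut}(E_8) = E_8^{\mathrm{ad}} = E_8$), combined with the case-by-case reductions of Colliot-Th\'el\`ene--Gille--Parimala \cite{CTGP}. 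Your final paragraph (``reduces to residual torus torsors and inner-form-of-$\SL n$ torsors'') does not accurately describe the endpoint of the parabolic descent; for split simply connected $G$, a Borel reduction kills the torsor outright because $H^1(K,B)=H^1(K,T)=0$ for the split maximal torus, and Merkurjev--Suslin plays no role in this reduction; conversely, for anisotropic $G$ the iteration never starts.

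Finally, the positive-characteristic case is genuinely harder in the paper and gets a separate argument (integral models over a mixed-characteristic Henselian DVR, Stein factorization control, Nisnevich's Grothendieck--Serre theorem over DVRs, and the Bertini theorem for PAC sections); your proposal treats both characteristics uniformly, which glosses over the entire mechanism of Section~\ref{sec-GrothSerre}.
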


\mni
The same method of proof also proves Period equals Index for $K$.
We thank Max Lieblich who shared with us his independent (and
different) proof of the following theorem.  This theorem can also be
proved using the Hasse principle of Efrat, \cite{Efrat}.

\begin{thm} \label{thm-PeriodIndexpac} \marpar{thm-PeriodIndexpac}
For every perfect PAC field $L$ that is nice, for every function field
$K/L$ of transcendence degree $1$, every Severi-Brauer variety over
$K$ has period equal to index.
\end{thm}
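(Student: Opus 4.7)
The plan is to reduce the equality of period and index to the existence of a $K$-rational point on an appropriate twisted Grassmannian, and then to produce that point via rational simple connectedness of the Grassmannian combined with the PAC hypothesis on $L$, in the spirit announced in the abstract. Let $X$ denote the given Severi-Brauer variety over $K$ of dimension $n-1$ and period $d$, so that $d \mid n$ and $d$, $n$ share the same prime divisors. The index equals $d$ as soon as $X$ contains a closed subscheme $Y$ whose base change to $K^{\text{sep}}$ is a linear $\mathbb{P}^{d-1}$. Such subschemes are parameterized by the generalized Severi-Brauer $K$-variety $\text{SB}_d(X)$, whose base change to $K^{\text{sep}}$ is the Grassmannian $\text{Grass}(d, n)$, so the theorem will follow as soon as I exhibit a single $K$-rational point of $\text{SB}_d(X)$.

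To produce this point, I would pick a smooth projective geometrically irreducible curve $C/L$ with function field $K$ and spread $\text{SB}_d(X)$ to a proper flat morphism $\pi : \mathcal{G} \to C$ whose generic fiber is $\text{SB}_d(X)$ and whose geometric generic fiber is $\text{Grass}(d, n)$. A $K$-rational point of $\text{SB}_d(X)$ is then the same as a section of $\pi$, so the task becomes the construction of such a section. For this, one studies the $L$-scheme $\Sec_\beta(\mathcal{G}/C)$ of sections of fixed numerical class $\beta$. The decisive input, which the paper develops for precisely this purpose, is the rational simple connectedness of Grassmannians: for $\beta$ sufficiently positive and sufficiently divisible, the base change $\Sec_\beta(\mathcal{G}/C) \otimes_L L^{\text{sep}}$ contains a distinguished geometrically irreducible component. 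A $\text{Gal}(L^{\text{sep}}/L)$-invariant choice of $\beta$ makes this component descend to a geometrically irreducible quasi-projective $L$-subscheme $\mathcal{S}$. Since $L$ is perfect PAC, the Bertini argument recorded in the introduction gives an $L$-rational point of $\mathcal{S}$, which yields the required section of $\pi$ and hence the desired $K$-rational point of $\text{SB}_d(X)$.

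The hard part will be the second step: one must exhibit a geometrically irreducible component of $\Sec_\beta(\mathcal{G}/C)$ that is stable under $\text{Gal}(L^{\text{sep}}/L)$, and this forces a delicate choice of the numerical class $\beta$ in terms of invariants of the Brauer class itself. It is here that the ``nice'' hypothesis enters the argument: in positive characteristic $p$, access to primitive roots of unity of all orders prime to $p$ is what allows the prime-to-$p$ part of the Brauer class to be presented by cyclic algebras, producing a Galois-invariant integral class suitable for defining $\beta$, while the $p$-primary part of the period must be controlled by a separate and more delicate argument using the structure of the Brauer group in characteristic $p$. Once this Galois descent is in place, the remaining assembly — invoking rational simple connectedness to produce the component and the PAC property to produce the rational point — proceeds along the same lines as the proof of Theorem \ref{thm-SerreIIpac}.
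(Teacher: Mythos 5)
Your overall scheme — pass from period to index via a rational point on the generalized Severi--Brauer variety $\mathrm{SB}_d(X)$, spread this out over a curve with function field $K$, and harvest a section from rational simple connectedness of the Grassmannian plus the PAC hypothesis — is the right shape, but the proposal contains a real gap and a misattribution of where the ``nice'' hypothesis is used.

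The gap: the rationally simply connected machinery in the paper (Proposition \ref{prop-concordmain}, Proposition \ref{prop-concord}, the parameter data of Definition \ref{defn-pd}) operates on \emph{polarized} families $(\Xx,\Ll)$, and the output — the component $Z_{Q,e}$ of the Hilbert scheme with rationally connected Abel map fibers — exists only because a relatively ample $\Ll$ is part of the data. For the generalized Severi--Brauer variety $\mathrm{SB}_d(X)$, the existence of such an $\Ll$ over $K$ (i.e., an invertible sheaf whose base change generates $\mathrm{Pic}$ of the Grassmannian) is exactly the \emph{vanishing of the elementary obstruction}, and it is precisely this condition that the paper's proof turns on: the RSC argument applied to minimal homogeneous spaces shows that a generalized Severi--Brauer variety over $K$ with vanishing elementary obstruction has a $K$-point, and then \cite[Theorem 11.1]{SStrsbg} converts that statement into period $=$ index. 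Your proposal never introduces this polarization; without it the spread-out family $\mathcal{G}\to C$ is not in the scope of the section-existence theorems, and the argument does not start.

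The misattribution: you locate the use of ``niceness'' in a Galois descent of the numerical class $\beta$ and suggest a cyclic-algebra presentation of the prime-to-$p$ part together with a separate argument for the $p$-primary part. Neither appears in, nor is needed for, the paper's proof. The component of the space of sections produced by \cite[Theorem 13.1]{dJHS} is canonically attached to the fibration and the degree and so descends to $K$ automatically; there is no delicate choice of $\beta$. The ``nice'' hypothesis is consumed earlier, in Theorem \ref{thm-StarrAx} (i.e., \cite[Theorem 1.1]{SPAC}): roots of unity of all orders prime to $p$ are what eliminate the auxiliary Galois extension $k'/k$ in the Bertini statement (Theorem \ref{thm-ffBertini} and Corollary \ref{cor-ffBertini}), which is needed to show that a nice PAC field is RC solving. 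Once RC solving is known, the rest (including the $p$-part of the period) is handled uniformly by running the argument over a mixed-characteristic Henselian DVR via Proposition \ref{prop-concordmain}; no separate characteristic-$p$ Brauer-theoretic argument is invoked.
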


\mni
A smooth, projective, geometrically connected scheme $X$ over a field $K$
is \emph{Fano}, resp. \emph{$2$-Fano}, etc., if the first graded piece of
the Chern character of $T_{X/K} = (\Omega_{X/K})^\vee$ is positive,
resp. if the first two graded pieces are positive, etc.,
cf. \cite{dJS9}. 
A field $K$ is $C_1$, resp. $C_2$, etc., if every $K$-scheme in
$\mathbb{P}^{n-1}_K$ that is a specialization of Fano complete
intersections, resp. $2$-Fano complete intersections, has a $K$-point.   
The method proves that every function field $K$ of transcendence
degree $1$ over a nice, perfect PAC field $L$ is a $C_2$-field, first
proved by Fried-Jarden, \cite[Theorem 21.3.6]{FriedJarden}.  
In fact, there are
by now many examples of rationally simply connected varieties beyond
$2$-Fano complete intersections.  The following formulation includes
one such family discovered by Robert Findley, \cite{Findley}.

\begin{thm} \label{thm-C2nicepac} \marpar{thm-C2nicepac}
Let $L$ be a PAC field that is nice, and let $K/L$ be a function field
of transcendence degree $1$.  
Let $X$ be a $K$-scheme, and let $\mc{L}$ be an invertible sheaf on
$X$.  Then $X$ has a $K$-point in either of the following cases:
first, $X\otimes_K K^{\text{sep}}$ is isomorphic to
the common zero locus in $\PP^{n-1}_{K^{\text{sep}}}$ 
of $c$ homogeneous polynomials of degrees $(d_1,\dots,d_c)$ such that
$d_1^2 + \dots + d_c^2 < n$, and the base change of 
$\mc{L}$ is the restriction of
$\OO_{\PP^{n-1}}(1)$.  Second, there is a $K$-point if 
$X\otimes_K K^{\text{sep}}$
is isomorphic to the intersection of a degree $d$ hypersurface and
$\text{Grass}_{K^{\text{sep}}}(r,(K^{\text{sep}})^{\oplus n})$, with
its Pl\"{u}cker embedding, such that
$(3r-1)d^2 - d < n-4r-1$, and the base change of $\mc{L}$ is the
restriction of the Pl\"{u}cker $\OO(1)$.  
\end{thm}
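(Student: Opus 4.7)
The plan is to realize $X(K)$ as the set of sections of a suitable model $\mcX \to C$, use rational simple connectedness to construct a distinguished geometrically irreducible component of the space of such sections, and then invoke the PAC hypothesis on $L$. Since $L$ is perfect and $K/L$ has transcendence degree $1$, the field $K$ is the function field of a unique smooth projective geometrically integral curve $C$ over $L$. Spreading $(X,\mcL)$ out to a flat projective model $\pi:\mcX\to C$ with a relative polarization extending $\mcL$, a $K$-point of $X$ is precisely a section of $\pi$ through the open locus over which $\mcX$ and $X$ agree, so it suffices to produce any section of $\pi$.

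For each integer $e$, let $\Sigma_e$ denote the quasi-projective $L$-scheme parametrizing sections $\sigma:C\to\mcX$ with $\deg_C(\sigma^*\mcL)=e$. The degree hypotheses $d_1^2+\cdots+d_c^2<n$ and $(3r-1)d^2-d<n-4r-1$ are precisely the $2$-Fano conditions under which the geometric generic fiber is rationally simply connected: the first by de Jong-Starr for low-degree complete intersections, the second by Findley for Grassmannian hypersurfaces. The content of rational simple connectedness in families is that for all $e$ sufficiently large there is a canonical geometrically irreducible open component $\Sigma_e^\circ\subseteq\Sigma_e\otimes_L L^{\text{sep}}$, distinguished from all other components by (for instance) the fiber-irreducibility of an Abel-Jacobi-type evaluation to an appropriate Picard scheme. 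Uniqueness of this characterization forces $\Sigma_e^\circ$ to be $\G(L^{\text{sep}}/L)$-stable, hence defined over $L$, and thus a geometrically irreducible quasi-projective $L$-variety.

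Since $L$ is PAC, every geometrically irreducible quasi-projective $L$-scheme has an $L$-point. Applied to $\Sigma_e^\circ$, this gives a section of $\pi$, hence a $K$-point of $X$. The ``nice'' hypothesis enters earlier, when constructing the model $\pi$: containing all primitive $n$th roots of unity for $n$ prime to the characteristic allows one to trivialize the cyclic $\mun{n}$-gerbes that would otherwise obstruct the existence of a genuine projective or Pl\"ucker embedding of $\mcX$ over $C$, so that the geometric generic fiber is genuinely a complete intersection or Grassmannian hypersurface section and not merely a twisted form thereof. Without this, one would only see the associated Brauer-Severi or twisted Grassmannian fibration and lose the link to the rational simple connectedness results.

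The main obstacle is establishing the canonical component $\Sigma_e^\circ$ together with its intrinsic, Galois-equivariant characterization. This is the core rational-simple-connectedness input: in the de Jong-Starr framework (foreshadowed by the macros \texttt{Porcupine}, \texttt{Comb}, \texttt{PlacidChain}, and \texttt{PeaceChain} in this paper) one constructs ``pseudo-sections'' by gluing combs of $1$-free rational curves in the fibers of $\pi$, smooths them to honest sections while controlling their Abel-Jacobi image, and proves that the resulting component is uniquely singled out by its numerical data. Extending this construction to Findley's Grassmannian hypersurfaces supplies the second case; once in hand, the PAC property converts the remaining arithmetic question into the geometric one of producing $L$-points on geometrically irreducible varieties, which is essentially immediate.
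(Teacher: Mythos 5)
Your high-level shape (model over the curve, a canonical irreducible space of sections singled out by rational simple connectedness, PAC produces the rational point) is the right skeleton, but there are two concrete gaps against what the proof actually requires, and the role you assign to the ``nice'' hypothesis is wrong.

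First, ``nice'' is not used to untwist $X$ into an honest complete intersection or Grassmannian hypersurface. The hypothesis of the theorem keeps $X$ as a twisted form, and the machinery is built to handle exactly that: the parameter datum of \cite[Proposition 1.19]{StarrXu} parameterizes pairs $(\Xx,\Ll)$ of polarized schemes whose base change to an algebraic closure is the relevant variety, so twisted forms appear as honest points of $M$ and no gerbe-trivialization is required. The actual role of ``nice'' is entirely different: it is the hypothesis of Theorem \ref{thm-StarrAx}, which says that a perfect PAC field of characteristic $p$ containing primitive roots of unity of all orders prime to $p$ is RC solving. That property --- rational points on specializations of separably rationally connected varieties --- is what is later applied to the Abel-map fiber $\wt{Z}_{R,\mc{A}}$ (via Proposition \ref{prop-concordmain}), and the roots of unity are used inside the Bertini argument (Corollary \ref{cor-ffBertini}) to kill the finite Galois extension $k'/k$ so that one can intersect with hyperplane sections without leaving the ground field.

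Second, and more fundamentally, your argument stays in characteristic $p$ and assumes the de Jong--Hassett--Starr machinery directly produces the distinguished component $\Sigma_e^\circ$ over the positive-characteristic ground field. It does not: dJHS Theorem 13.1 is a characteristic $0$ theorem. The entire mechanism of the paper is a transfer: one spreads the family out over a prime regular DVR $R$ with characteristic $0$ fraction field (Lemma \ref{lem-ft}, Lemma \ref{lem-extend}), applies dJHS over $\text{Frac}(R)$ to get the irreducible components $Z_{K,e}$ with rationally connected Abel-map fibers, resolves and closes up to get a flat projective $R$-scheme $\wt{Z}_{R,\mc{A}}$ whose generic fiber is smooth, integral, and separably rationally connected, and only then invokes the RC solving property of $L$ to find an $L$-point on the specialization (Proposition \ref{prop-concordmain}, Proposition \ref{prop-concord}). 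Your proposed Galois-descent of a distinguished component characterized by Abel-fiber irreducibility is a plausible idea, but you offer no mechanism to establish that such a component exists and has a rational point over $L^{\text{sep}}$ when $L$ has characteristic $p$, and you explicitly defer this (``the main obstacle is establishing the canonical component'') which is precisely the technical core. The characteristic $0$ lift plus RC solving specialization is the missing step, and without it the proof does not go through.
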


\mni
Now let $(R,\mf{m}_R)$ be a Henselian DVR whose fraction field $K$ has
characteristic $0$ and whose residue field $L$ is a perfect PAC field
of characteristic $p$
that contains a primitive root of unity of order $n$ for every integer
$n$ prime to $p$. 
The Ax-Kochen method of Denef, \cite{DenefAxKochen}, yields the following.

\begin{thm} \label{thm-AKnicepac} \marpar{thm-AKnicepac}
The field $K$ has cohomological dimension $\leq 2$.  For Severi-Brauer
varieties over $K$, the Period equals the Index.  There exists an
integer $p_0$ such that for every such field with characteristic
$p\geq p_0$,
Serre's ``Conjecture II'' holds for $K$.  For every integer $n$ and
sequence of integers $(d_1,\dots,d_c)$ with $d_1^2 + \dots + d_c^2
\leq n-1$, there exists an integer $p_0=p_0(n;d_1,\dots,d_r)$ such that
for every field $K$ as above of characteristic $p\geq p_0$, every
closed subscheme of $\PP^{n-1}_K$ defined by equations of degrees
$(d_1,\dots,d_c)$ has a $K$-point.  Finally, for all triples of
integers $(n,r,d)$ with $(3r-1)d^2-d<n-4r-1$, there exists an integer
$p_0=p_0(n,r,d)$ such that for every $K$ as above of characteristic
$p\geq p_0$, for every polarized $K$-scheme $(\Xx_K,\Ll_K)$ whose base
change to $\ol{K}$ is a degree $d$ hypersurface in
$\text{Grass}_{\ol{K}}(r,\ol{K}^{\oplus n})$ with its Pl\"{u}cker
invertible sheaf, $\Xx_K$ has a $K$-point.
\end{thm}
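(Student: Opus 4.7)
\smallskip
The plan is to apply the Ax-Kochen-Ershov principle, in the effective form developed by Denef, in order to transfer each of the four assertions from a companion Henselian field of equal characteristic $p$ to the mixed characteristic field $K$. The companion is the Laurent series field $F := L((t))$, which is Henselian of equal characteristic $p$ with the same residue field $L$ and the same value group $\ZZ$ as $K$. By Ax-Kochen-Ershov, for each first-order sentence $\phi$ in the language of valued fields there is an integer $p_0(\phi)$ such that $K\models \phi$ if and only if $F\models \phi$ whenever the residue characteristic of $K$ satisfies $p \geq p_0(\phi)$, and Denef's geometric proof makes $p_0(\phi)$ effective as a function of the complexity of $\phi$. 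In our application $\phi$ is an existential (or $\forall\exists$) sentence encoding either the triviality of a Galois cocycle (Conjecture~II), the existence of a splitting subfield of degree equal to the period for a central simple algebra (period equals index), or a $K$-point on an embedded polarized variety; in each case the complexity of $\phi$ is controlled by the combinatorial data $(n;d_1,\ldots,d_c)$, $(n,r,d)$, or the dimension of the simply connected group, and this yields the claimed $p_0$-dependence.

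\smallskip
The cohomological dimension bound requires no restriction on $p$. Since $L$ is a perfect PAC field one has $\text{cd}(L)\leq 1$. For each prime $\ell\neq p$ the Hochschild-Serre spectral sequence attached to the inertia subgroup $I\subset \G(K^{\text{sep}}/K)$ with quotient $\G(L^{\text{sep}}/L)$, combined with the fact that the prime-to-$p$ quotient of $I$ is procyclic of cohomological dimension one, yields $\text{cd}_\ell(K)\leq 1+\text{cd}_\ell(L)\leq 2$. For $\ell=p$ we use that $K$ has characteristic zero, that a perfect PAC field of positive characteristic satisfies $\text{cd}_p\leq 1$, and that the niceness hypothesis supplies the $p$-power roots of unity needed for the same spectral sequence argument to give $\text{cd}_p(K)\leq 2$.

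\smallskip
It remains to establish the three existence-type assertions for $F=L((t))$ and then invoke the Ax-Kochen-Denef transfer. I would argue over $F$ by spreading out a proposed smooth projective $F$-variety $X$, or a proposed trivialization of a torsor, or a Severi-Brauer variety, to a proper flat model $\mcX \to \SP L[[t]]$ with special fiber $\mcX_0$ defined over $L$. The rational simple connectedness machinery underlying Theorems~\ref{thm-SerreIIpac}, \ref{thm-PeriodIndexpac}, and \ref{thm-C2nicepac}---chain-of-conics or chain-of-lines arguments combined with a moduli space of sections---when applied fiberwise over $\SP L[[t]]$ and specialized to $\mcX_0$, produces a geometrically irreducible $L$-scheme parametrizing sections of $\mcX$ over the generic point; PAC-ness of $L$ then yields an $L$-rational point of this moduli space, and Hensel's lemma lifts it to a section over $L[[t]]$, hence to the desired $F$-point, trivializing cocycle, or splitting subfield. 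The main obstacle is precisely this intermediate step: neither $F$ nor $K$ is a function field of transcendence degree one over $L$, so Theorems~\ref{thm-SerreIIpac}--\ref{thm-C2nicepac} do not apply verbatim, and one must redo the rational simple connectedness argument relative to the Henselian base $L[[t]]$, using PAC-ness of the residue field $L$ in place of PAC-ness of the constant field of a global curve. In the case of bad reduction this additionally requires a preliminary semistable alteration and careful bookkeeping of the chain-component degrees, and it is this step whose complexity forces the bound $p\geq p_0(n,r,d)$ in the final statements.
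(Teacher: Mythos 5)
Your high-level strategy -- transfer from the equal characteristic companion $L((t))$ via Denef's effective Ax--Kochen and then establish the equal characteristic case using rational simple connectedness -- matches the paper's overall route, but several of the individual steps diverge from the paper or contain genuine gaps.

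First, the cohomological-dimension and period-index assertions are stated in the theorem \emph{without} a lower bound on $p$, so neither can go through the Ax--Kochen transfer, which by its nature only yields a $p\geq p_0(\phi)$ result, with $p_0$ even depending on the index $d$ of the central simple algebra in the period-index statement. The paper instead proves both unconditionally in Lemma~\ref{lem-CDtrans}: the period-index part comes from the short exact sequence of Brauer groups for a Henselian DVR with perfect residue field of cohomological dimension $\leq 1$ (the class lives in $H^1(k,\QQ/\ZZ)$, and the cyclic extension of $k$ furnishes an \'etale DVR extension splitting it), while the characteristic-zero $\operatorname{cd}\leq 2$ statement comes from Merkurjev--Suslin (surjectivity of reduced norms characterizes $\operatorname{cd}\leq 2$) combined with Greenberg approximation to pass from $\wh{R}$ back to $R$. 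Your argument for $\operatorname{cd}_p(K)\leq 2$ via Hochschild--Serre also has a concrete error: ``niceness'' only supplies primitive $n$-th roots of unity for $n$ \emph{prime} to $p$, so it does not give the $p$-power roots of unity your spectral-sequence argument invokes.

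Second, your treatment of the equal-characteristic side is not what the paper does and is not clearly correct as written. You propose to specialize the RSC machinery fiberwise over $L[[t]]$, produce a geometrically irreducible $L$-scheme of sections, hit it with PAC-ness, and lift by Hensel. The paper instead uses Artin approximation to approximate the formal arc $z:\SP L\Sem{t}\to\ol{M}_L$ by an honest integral curve $C_e\subset\ol{M}_L\times_{\SP L}\PP^1_L$, applies the $C_2$-type result over the function field of $C_e$ (Proposition 1.12 of \cite{StarrXu}, valid since $L$ is RC solving), and then descends the section back to $L\Sem{t}$ by Greenberg's theorem. This detour through a curve is what makes the argument uniform; a direct ``moduli of sections over $L[[t]]$'' is not readily available as a finite type $L$-scheme, and your mention of semistable alterations suggests you correctly sense that this step is delicate, but the alternative you sketch does not close the gap.

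Third, for Serre's Conjecture~II you only note that the sentence encodes triviality of a cocycle. The paper's argument is more specific: the RSC parameter datum applies only to the split group $G_0$ of type $E_8$, and one then exploits that $E_8$ has trivial center and no outer automorphisms (so every form of $E_8$ over $K$ is automatically split, and every $E_8$-torsor becomes trivial), after which the passage from the $E_8$ case (plus $\operatorname{cd}\leq 2$ and Period $=$ Index) to the full Serre~II relies on \cite[Theorem 1.2]{CTGP}. Without this reduction it is not clear how to express arbitrary simply connected groups and their torsors as instances of the universal family attached to a single parameter datum.
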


\mni
Finally, a variant of the method implies a similar result for perfect
PAC fields of characteristic $p$ that are not necessarily nice.  The
$C_2$ result was first proved by Fried-Jarden, \cite[Theorem
21.3.6]{FriedJarden}, but there 
are many other rationally simply connected varieties than $2$-Fano
complete intersections.

\begin{thm} \label{thm-C2pac} \marpar{thm-C2pac}
Let $L$ be a perfect PAC field that is not necessarily nice.  
Let $X$ be an $L$-scheme, and let $\mc{L}$ be an invertible sheaf on
$X$.  Then $X$ has a $L$-point in either of the following cases:
first, $X\otimes_L \ol{L}$ together with $\mc{L}$ is isomorphic to
the common zero locus in $\mathbb{P}^{n-1}_{\ol{L}}$
of $c$ homogeneous polynomials of degrees $(d_1,\dots,d_c)$ such that
$d_1^2 + \dots + d_c^2 < n$ together with the restriction of
$\OO_{\PP^{n-1}}(1)$.  Second, there is a $K$-point if 
$X\otimes_L \ol{L}$
is isomorphic to the intersection of a degree $d$ hypersurface nd
$\text{Grass}_{\ol{L}}(r,\ol{L}^{\oplus n})$, with
its Pl\"{u}cker embedding, such that
$(3r-1)d^2 - d < n-4r-1$, and the base change of $\mc{L}$ is the
restriction of the Pl\"{u}cker $\OO(1)$.  
\end{thm}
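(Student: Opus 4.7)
The plan is to combine the rational simple connectedness of smooth varieties satisfying the given degree inequalities with the PAC property of $L$ to produce first a line on $X$ defined over $L$, and then an $L$-point on that line.

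Since $L$ is perfect PAC, every geometrically irreducible quasi-projective $L$-scheme has an $L$-point; so it suffices to construct a geometrically irreducible $L$-subscheme $N$ of the Fano scheme of lines $F_1(X,\mathcal{L})$, parameterizing closed subschemes $\ell \subset X$ for which $(\ell,\mathcal{L}|_\ell) \otimes_L \overline{L} \cong (\mathbb{P}^1_{\overline{L}}, \mathcal{O}(1))$. When $X_{\overline{L}}$ is smooth, the rational simple connectedness results of \cite{dJS9} in the complete intersection case, or \cite{Findley} in the Grassmannian hypersurface case, imply that $F_1(X_{\overline{L}},\mathcal{L})$ admits a canonical irreducible component; by its uniqueness and hence Galois invariance, this component descends to a geometrically irreducible $L$-subscheme $N \subset F_1(X,\mathcal{L})$. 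Given $N$, the PAC property produces an $L$-point, corresponding to a closed subscheme $\ell \subset X$ defined over $L$ that is a Severi-Brauer form of $\mathbb{P}^1$. Since $L$ has cohomological dimension $\leq 1$, every Severi-Brauer variety over $L$ is trivial, so $\ell \cong \mathbb{P}^1_L$, and any $L$-point of $\ell$ is an $L$-point of $X$.

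To handle $X_{\overline{L}}$ that is singular or reducible, I would realize $X$ as a fiber of the universal family over the Hilbert scheme $B$ of complete intersections (respectively Grassmannian hypersurfaces) of the given numerical type. The scheme $B$ is an open subset of a product of projective spaces, hence $L$-rational, and $X$ corresponds to an $L$-point $b_0 \in B$ whose generic deformations give smooth varieties. The canonical component of the relative Fano scheme of lines exists over a dense open $U \subset B$ by the smooth-case analysis, and its schematic closure over $B$ is $B$-flat with geometrically irreducible generic fiber, so its fiber over $b_0$ is geometrically connected. The main obstacle is then to extract from this fiber a geometrically irreducible $L$-subscheme, which is nontrivial if the fiber decomposes into Galois-conjugate components over $\overline{L}$. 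The key ingredients for overcoming this should be the $L$-rationality of $B$, flatness of the relative construction, and the cohomological dimension $\leq 1$ of $L$ to trivialize Severi-Brauer obstructions; the argument likely requires working with higher-degree rational curves, or with an auxiliary moduli problem whose canonical component remains irreducible (not merely connected) under arbitrary specialization within the given numerical type.
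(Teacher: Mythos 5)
Your approach is genuinely different from the paper's, and it has a real gap that you yourself flag but do not close.

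The paper never works with the Fano scheme of lines $F_1(X,\mathcal{L})$ or tries to descend a canonical irreducible component of it. Instead, it realizes $X$ as the fiber $\Xx_L$ of a parameter datum over a point $z: \operatorname{Spec} L \to M_0$, lets $B \subset M_0$ be the closure of the image, and proves (Proposition \ref{prop-integral}) that $\Xx_B \to B$ admits a \emph{PAC section} --- an integral closed subscheme $Y \subset \Xx_B$, dominant over $B$ with geometrically irreducible generic fiber. Pulling back by the dominant morphism $\operatorname{Spec} L \to B$ gives a geometrically irreducible quasi-projective $L$-scheme mapping to $\Xx_L$, so the PAC property produces an $L$-point of $\Xx_L$ directly, without ever producing a line defined over $L$. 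The existence of a PAC section is proved by contradiction using the Bertini Theorem \ref{thm-ffBertini} and Corollary \ref{cor-ffBertini}: if no PAC section existed, then for slices by suitable linear spaces defined over finite fields linearly disjoint from the auxiliary Galois extension $k'/k$, the restricted family over the resulting curve would also have no PAC section, hence no rational section; but this contradicts the fact that the parameter datum has rational points over global function fields (Corollary \ref{cor-integral}, which follows from the RSC property via \cite[Proposition 1.12]{StarrXu}, de Jong--He--Starr, and the finite-field results of Esnault and Esnault--Xu).

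The decisive advantage of this route is precisely that it sidesteps the problem you identify at the end. Your plan requires descending a canonical irreducible component of a Fano (or moduli) space from $\overline{L}$ to $L$, which is automatic when that component is literally unique, but becomes unclear once $X_{\overline{L}}$ is singular, non-reduced, or reducible and the fiber of the relative construction at $b_0$ decomposes into Galois-conjugate pieces. You explicitly say this step is ``nontrivial'' and ``likely requires'' some further idea; that is the gap. The paper's Bertini theorem is engineered exactly so that no descent of a component is ever needed: the finite Galois extension $k'/k$ captures the possible splitting of irreducible components, and the hypothesis of linear disjointness allows one to avoid it by choosing finite field extensions of coprime degree (using Lang--Weil to supply $U(K)\neq\emptyset$). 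There is an additional point your plan does not address: $L$ may have positive characteristic, and the RSC results of \cite{dJHS}, \cite{dJS9}, and \cite{Findley} that you invoke for the irreducibility of the space of lines are established in characteristic $0$; the paper transports them to characteristic $p$ only through the chain parameter-datum $\Rightarrow$ global function fields $\Rightarrow$ PAC sections, not by applying RSC directly over $\overline{L}$.
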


\mni
The method uses very much the notions of \emph{rationally connected}
and \emph{rationally simply connected} varieties, together with their
specializations.  The full results are considerably stronger than the
formulations above: they also include results about arbitrary
specializations, and they give height bounds for rational points.

\mni
\textbf{Acknowledgments.}  I am very grateful to Chenyang Xu; this
article builds on the earlier joint work in \cite{StarrXu}.  
I am very grateful to Max Lieblich who explained his independent proof
that ``Period equals Index'' for function fields over PAC fields.  
I an grateful to Yi Zhu with whom I discussed the technique to
transport from characteristic $0$ to characteristic $p$ results around
Serre's ``Conjecture II'' using Nisnevich's solution of the
Grothendieck-Serre conjecture in dimension $1$.  
During the development of this article I was supported
by NSF Grants DMS-0846972 and DMS-1405709, as well as a Simons
Foundation Fellowship. 


\section{A Bertini Theorem over Non-Algebraically Closed Fields} 
\label{sec-Bertini}   \marpar{sec-Bertini}

\mni
The following Bertini theorem extends 
\cite[Corollary 2.2]{GS} to arbitrary fields.  Let $k$ be a field.
Let $B$ be a separated,
geometrically integral $k$-scheme of dimension $m\geq 1$.  Up to
replacing $B$ by a dense, open subscheme, assume that $B$ is a normal
scheme.  
Let $u:B\to \PP^N_k$ be a
generically unramified, finite type morphism.  Let $h:B'\to B$ be a
generically finite, finite type morphism.  Let $c$ be an integer
satisfying $0\leq c \leq \max(0,m-1)$, and let $r$ denote $N-c$.  
Denote by $\text{Grass}_k(\PP^r,\PP^N)$ the Grassmannian
parameterizing linear subspaces of $\PP^N_k$ of dimension $r$, i.e.,
of codimension $c$.  This is the Hilbert scheme of $\PP^N_k$ over $k$
for the numerical polynomial $P_r(t)$ such that $P_r(d) =
\binom{d+r}{r}$ for every integer $d\geq -r$.  Denote by $\Lambda \subset
\text{Grass}_k(\PP^r,\PP^N)\times_{\SP k} \PP^N_k$ the universal linear
subspace.  
For every field extension $K/k$, for every $[L]\in
\text{Grass}_k(\PP^r,\PP^N)(\SP K)$, the associated morphism
$$
h_L: B'\times_{\PP^N_k} L\to B\times_{\PP^N_k} L,
$$
is a morphism of finite type $K$-schemes.  

\begin{defn} \label{defn-insepsec} \marpar{defn-insepsec}
An \emph{inseparable section} of $h$ is an integral 
closed subscheme $Z\subset B'$ such
that $h|_Z:Z\to B$ is dominant and the field extension $k(Z)/k(B)$ is
purely inseparable.  A \emph{rational section} is an inseparable
section such that the extension $k(Z)/k(B)$ is an isomorphism of
fields.  
The \emph{domain of definition} is the maximal
open subscheme of $B$ over which $h|_Z$ is flat and finite.
Equivalently, this is the maximal open subscheme over which $h|_Z$ is
faithfully flat. 
\end{defn}

\mni  
For every inseparable section, resp. rational section, $Z$
with domain of definition $W$,
if $B\times_{\PP^N_k} L$ 
intersects $W\times_{\SP k} \SP K$ in a dense open subset of
$B\times_{\PP^N_k} L$, 
then the associated reduced scheme of $Z\times_{\PP^N_k} L$  
is an inseparable
section, resp. rational section, of $h_L$. 

\begin{thm} \label{thm-ffBertini} \marpar{thm-ffBertini}
There exists a dense Zariski open subset $U\subset
\text{Grass}_k(\PP^r,\PP^N)$ 
and a finite, Galois extension $k'/k$ 
such that for every field extension 
$K/k$ that is linearly disjoint from $k'$,
for every $[L]\in U(\SP K)$, the restriction
map from the set of inseparable sections of $h$ to the set of
inseparable sections of $h_L$ is well-defined and is a bijection.  If
$k$ is perfect, the same holds with ``rational sections'' in place of
``inseparable sections''.
\end{thm}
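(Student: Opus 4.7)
\medskip\noindent
The plan is to reduce Theorem~\ref{thm-ffBertini} to applications of the Bertini theorem \cite[Corollary 2.2]{GS} for three auxiliary families of $k$-schemes, combined with the standard separable/purely inseparable decomposition of finite field extensions. First I would shrink $B$ so that $u$ is unramified and $h$ is finite and flat, and replace $B'$ by its reduction. Decompose $B' = \bigcup_i C_i$ into irreducible components and distinguish the top-dimensional ones (those dominating $B$, necessarily of dimension $m$) from the rest, whose $h$-images in $B$ have dimension strictly less than $m$. For each top-dimensional $C_i$, let $k(C_i)^s \subseteq k(C_i)$ denote the maximal separable subextension of $k(C_i)/k(B)$, let $B_i^s$ be the normalization of $B$ in $k(C_i)^s$, and factor $C_i \to B_i^s \to B$, so that $B_i^s \to B$ is generically \'etale of degree $d_s^{(i)} := [k(C_i)^s : k(B)]$ and $C_i \to B_i^s$ is purely inseparable.

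\medskip\noindent
Apply \cite[Corollary 2.2]{GS} to produce a dense open and Galois extension adapted to $B$, each $C_i$, and each $B_i^s$, by Galois descent: choose a finite Galois extension $k'/k$ that splits $B \otimes_k k'$, every $C_i \otimes_k k'$, and every $B_i^s \otimes_k k'$ into geometrically integral components, apply the original Bertini over $k'$ to each component, take the Galois-equivariant intersection of the resulting Bertini opens, and descend to a $k$-rational dense open $U \subseteq \text{Grass}_k(\PP^r, \PP^N)$. This yields the pair $(U, k'/k)$ claimed in the statement, with the property that for every $K/k$ linearly disjoint from $k'$ and every $[L] \in U(\SP K)$ the schemes $B \times_{\PP^N} L$, $C_i \times_{\PP^N} L$, and $B_i^s \times_{\PP^N} L$ are all integral over $K$, of the expected dimension, with the first geometrically integral. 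Shrinking $U$ further, one arranges that each non-top-dimensional $C_j$ restricts to a subscheme of dimension strictly less than $m-c$, and that $B_L$ meets the \'etale locus of every $B_i^s \to B$ densely.

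\medskip\noindent
Well-definedness is then immediate: if $Z = C_i$ is an inseparable section, $C_i \to B$ is a universal homeomorphism with purely inseparable residue extensions, so $(Z \times_B B_L)^{\text{red}}$ is integral and its function field is purely inseparable over $k(B_L)$. Injectivity is clear because $B_L$ meets any dense open of $B$. For surjectivity, an inseparable section $\tilde Z$ of $h_L$ has dimension $m-c$, so the dimension bounds force $\tilde Z \subseteq C_i \times_B B_L$ for a unique top-dimensional $C_i$, and by irreducibility $\tilde Z = (C_i \times_B B_L)^{\text{red}}$. The factorization $C_i \to B_i^s \to B$ base-changes to $\tilde Z \to (B_i^s \times_B B_L)^{\text{red}} \to B_L$, in which the first map is purely inseparable and the second is generically \'etale of degree $d_s^{(i)}$. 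Purely inseparability of $\tilde Z \to B_L$ then forces $d_s^{(i)} = 1$, so $k(C_i)/k(B)$ is purely inseparable and $C_i$ is an inseparable section of $h$. For the rational section variant under $k$ perfect, one further uses that for generic $L$ the generic fiber of the finite flat $C_i \to B$ pulled back to $B_L$ remains a field, preserving the inseparable degree as well, so $k(\tilde Z) = k(B_L)$ forces $k(C_i) = k(B)$. The main obstacle is the Galois descent step producing $(U, k'/k)$: since $C_i$ and $B_i^s$ need not be geometrically integral over $k$ even when $B$ is, the extension $k'/k$ must be chosen to split all three families at once, and the Bertini open must be assembled as a Galois-equivariant intersection before descent; once this is done, the separable--inseparable factorization argument for the bijection is essentially formal.
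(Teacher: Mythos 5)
Your overall strategy is close in spirit to the paper's: you perform the same separable/purely inseparable factorization of the dominating components, and you use linear disjointness from a Galois extension $k'/k$ to kill the separable part, just as the paper does in Case IV of its proof (where $k'$ is the Galois closure of the algebraic closure of $k$ in $k(B')$). Replacing the paper's Noetherian induction by a one-step decomposition into irreducible components is a reasonable variant and does not itself cause problems. The paper also uses Jouanolou's Bertini theorem directly over the base field $k$ rather than the algebraically-closed-field Bertini plus Galois descent; the descent maneuver you sketch should be workable, but it makes the construction of $U$ more delicate than necessary. These are not where the trouble is.

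The genuine gap is in your treatment of the rational-section variant. You reduce, correctly, to the situation where $C_i\to B$ is dominant, finite and flat, and \emph{purely inseparable} of degree $a>1$. To conclude that $C_i$ must be a rational section of $h$ whenever $\tilde Z=(C_i\times_B B_L)^{\mathrm{red}}$ is a rational section of $h_L$, you assert that for generic $L$ ``the generic fiber of the finite flat $C_i\to B$ pulled back to $B_L$ remains a field, preserving the inseparable degree.'' This is precisely the nontrivial point, and it does not follow formally. A priori, the fiber of $C_i\times_B B_L\to B_L$ over the generic point of $B_L$ is only an Artin local $k(B_L)$-algebra of length $a$; if it picks up nilpotents, then the reduced scheme $(C_i\times_B B_L)^{\mathrm{red}}\to B_L$ can have degree $1$, producing a rational section of $h_L$ without $C_i$ being one of $h$. (At the other extreme $c=m$, i.e.\ when $B_L$ is a point, this \emph{always} happens: the fiber $k(C_i)\otimes_{k(B)}\ol k$ is non-reduced because $k$ is perfect.) The paper's Case III handles exactly this point by a differential argument: since $h$ is purely inseparable of degree $>1$, the map $dh^\dagger:h^*\Omega_{B/k}\to\Omega_{B'/k}$ has a nonzero kernel $\mathcal T^\vee_h$, and one shows by a positivity/dimension argument on the incidence variety over the Grassmannian that for generic $L$ the tangent space to $B\times_{\PP^N}L$ at the generic point is \emph{not} contained in the annihilator of $\mathcal T^\vee_h$; hence the derivative of $h_L$ is not surjective at the generic point, so $h_L$ is not generically unramified and admits no rational section. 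Without this (or an equivalent computation with $\Omega_{C_i/B}$ and the conormal sequence of $B_L\subset B$), your claim that the inseparable degree is preserved is unproved, and the rational-section half of the theorem does not follow. The inseparable-section half of your argument, by contrast, is sound modulo the Bertini setup: the degree-$d_s^{(i)}$ separable factor survives generic hyperplane section by irreducibility alone, so the forcing of $d_s^{(i)}=1$ works.
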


\begin{proof}
If $c$ equals $0$, then $r$ equals $N$, and the result
is vacuously true with $k'=k$.
Thus, assume that $m\geq 2$, and assume that $1\leq
c \leq m-1$.  

\mni
\textbf{Restriction Map Well-Defined and Injective.}
By 
\cite[Th\'{e}or\`{e}me 4.10, 6.10]{Jou}, there exists a dense open
subscheme $U_0\subset \text{Grass}_k(\PP^r,\PP^N)$ such that for
every $K/k$ and for every $L\subset \PP^N_K$ 
with $[L]\in U_0(K)$, $B\times_{\PP^N_k} L$ is
a geometrically integral $K$-scheme.

\mni
There are only finitely many inseparable sections of $h$,
resp. rational sections of $h$.  For each, the
maximal domain of definition is a dense open subset whose complement
is a proper closed subset.  Since $u$ is generically finite, the image
of this proper closed subset in $\mathbb{P}^N_k$ is contained in a
proper closed subset.  Similarly, for any two distinct inseparable
sections, resp. rational sections, 
the intersection of the closures of the images is a closed
subset of $X$ whose closed image in $B$ does not contain the generic
point.  Thus, the image of this closed subset in $\mathbb{P}^N_k$ is
contained in a proper closed subset.  
Since $\mathbb{P}^N_k$ is irreducible, the
union of finitely many proper closed subsets is a proper closed
subset, say $C$.  The Fano scheme parameterizing linear subspaces
contained in $C$ is a proper closed subset of
$\text{Grass}_k(\PP^r,\PP^N)$.  Replace $U_0$ by the relative
complement in $U_0$ of this Fano scheme.   
Then for every $K/k$, for every $L\subset \PP^N_K$ 
with $[L]\in U_0(\SP K)$, $B\times_{\PP^N_k} L$ is a geometrically
integral $K$-scheme that intersects the domain of definition of each
inseparable section, resp. rational section, 
of $h$ and such that for any two inseparable sections, resp. rational
sections, 
$B\times_{\PP^N_k} L$ is not contained in the closure of the locus
over which the two sections are equal.  
Thus, the base change over $K$ of every
inseparable section, resp. rational section, 
of $h$ restricts to a well-defined inseparable section, resp. rational
section, 
of $h_L$, and this restriction map is injective.

\mni
\textbf{Surjectivity of Restriction Map. Noetherian Induction.}
It remains to prove that there exists a dense open subset $U$ 
of $U_0$ and
a finite Galois extension $k'/k$ 
such that for every finite $K/k$ that is linearly disjoint from $k'$, 
for every $L\subset \PP^N_K$ with $[L]\in U(\SP K)$, the
restriction map is surjective.  This is proved by
Noetherian induction for restrictions of $h$ to closed subsets of $B'$.
If $B'$ equals $C\cup Y$ for proper closed subsets $C$ and $Y$, and if
the result is proved for $C$ and $Y$, then define $k'/k$ to be the
compositum of $k'_c/k$ and $k'_Y/k$,
and define $U =U_C\cap U_Y$.  Since $B$ is
integral, resp. since $B\times_{\PP^N_k}L$ is integral, every inseparable
section, resp. rational section, has image in $C$ or in $Y$.  
Thus, the results for $C$ and $Y$
imply the result for $B'$.  Thus, assume that $B'$ is irreducible.

\mni
Similarly, if $B'$ is nonreduced, every
inseparable section, resp. rational section, over $B$, resp. over 
$B\times_{\PP^N_k}
L$, factors through the associated reduced scheme of $B'$.  Thus,
assume that $B'$ is irreducible and reduced.

\mni
\textbf{Case I. Morphism not Dominant.}
If
$h(B')$ is contained in a proper closed subset of $B$, then the same
argument as above shows that, for $d=1$ and for
$U$ a dense open subset of $U_0$, no $B\times_{\PP^N_k}L$ is contained
in $h(B')$.  Thus, the restriction map on sections is the unique set
map from the empty set to the empty set, and the result is proved.

\mni
\textbf{Case II. Morphism Birational.}
Similarly, if $h:B'\to B$ is birational, then $h_L$ is also birational
since the domain of definition of the inverse rational section
intersects the integral scheme
$B\times_{\PP^N_k} L$.  Thus, the set of sections for each
is a singleton set, and the restriction map is a bijection.

\mni
\textbf{Case III. Morphism Purely Inseparable, not Birational.} 
If $h:B'\to B$ is dominant and purely inseparable of degree $a>1$,
then $B'$ is an inseparable section.  So again, the restriction map on
inseparable sections is a bijection between singleton sets.  

\mni
For
rational sections, assume that $k$ is perfect (otherwise the argument
is much more technical).  Since $B'$ is integral and $k$ is perfect,
$B'$ is generically smooth over $k$.  Up to shrinking $B$ and $B'$,
assume that $B$ and $B'$ are $k$-smooth, and assume that $h$ is finite
and flat.
Then
$dh^\dagger:h^*\Omega_{B/k}\to \Omega_{B'/k}$ is a homomorphism of
locally free sheaves of rank $m$, and it is not surjective.  Up to
shrinking further, assume that the cokernel is locally free, so that
also the image of $dh^\dagger$ is locally free.  Thus, also the kernel
$\mc{T}^\vee_g$ of $dg^\dagger$ is locally free of positive rank
$e\geq 1$.  
The fiber product
$$
\Lambda_{U_0}' = U_0\times_{\text{Grass}_k(\PP^r,\PP^N)} \Lambda
\times_{\PP^N_k} B', 
$$
parameterizes pairs $([L],x)$ of a linear space $L$ and a point $x\in
B'$ such that $u(h(x))\in L$.  By generic flatness, up to replacing
$U_0$ by a dense open subscheme, the projection morphism
$\Lambda_{U_0,X}\to U_0$ is flat.

\mni
By \cite[Th\'{e}or\`{e}me 4.10, 6.10]{Jou},
there is a
dense open subset $V_0\subset \Lambda_{U_0}'$ parameterizing pairs
$([L],x)$ such that
$B\times_{\PP^N_k} L$ is smooth of dimension $m-c$ at $h(x)$.  For
every such $([L],x)$, the tangent space to $B\times_{\PP^N_k} L$ at
$h(x)$ gives a point in the Grassmannian bundle of $(m-c)$-dimensional
subspaces of the Zariski tangent space $T_{g(x)}B$.  The associated
morphism from $V_0$ to the Grassmannian bundle over $B$ of the tangent
bundle is dominant.  Thus, there exists a dense open $V\subset V_0$
parameterizing $([L],x)$ such that the tangent space to
$B\times_{\PP^N_k} L$ at $h(x)$ is not contained in the annihilator of
$\mc{T}_g^\vee$ (this annihilator is a subspace of the Zariski tangent
space of codimension $\geq 1$).  Since $\Lambda_{U_0}'\to U_0$ is
flat, the image in $U_0$ of $V$ is a dense Zariski open subscheme
$U\subset U_0$.  Set $d$ equal to $1$.  For every field extension
$K/k$ and for every $[L]\in U(K)$, for the generic point of
$B\times_{\PP^N_k} L$, the derivative of $h_L$ is not surjective.
Therefore $h_L$ admits no rational section.  

\mni
\textbf{Case IV. Morphism Separable, not Birational.}
Finally, assume that $B'\to B$ is dominant and the separable closure $L$
of $k(B)$ in $k(B')$ has degree $a>1$.  Denote by $B''$ the integral
closure of $B$ in $L$.  There is a factorization $B'\to B''\to B$, and
$B''\to B$ is dominant and generically \'{e}tale of degree $>1$.  Up
to shrinking $B$ and $B''$, assume that that $B$ is regular, and
assume 
that $B''\to B$ is finite and
\'{e}tale of degree $a$.
Thus, there is no inseparable section nor rational section of $B''/B$.  
The goal is to find $k'/k$ and $U$
such that for every finite extension $K/k$ that is linearly disjoint
from $k'/k$ and
for every $[L]\in U(\SP K)$, also the finite \'{e}tale morphism
$B''\times_{\PP^N_k} L\to B\times_{\PP^N_k} L$ has no rational section
(and thus no inseparable section).  Then the restriction map is again
the unique map between empty sets, which is a bijection.  Up to
replacing $B'$ by $B''$, assume that $h:B'\to B$ is finite and
\'{e}tale of degree $a>1$.


\mni
\textbf{Case IVa. Base Field not Separably Closed in Extension. $[k':k]>1$.}
Since $B$ is generically smooth over $k$ (being
geometrically integral), $k(B)/k$ is a separable field extension.
Thus also $k(B')/k$ is a separable field extension.  It is also a
finitely generated field extension.  Thus the algebraic closure of $k$
in $k(B')$ is a finite, separable extension $\kappa/k$.  Denote by
$k'/k$ the Galois closure of $\kappa/k$.  Note that $[\kappa:k]$ is
greater than $1$ if and only if $[k':k]$ is greater than $1$.
In this case, 
set $U$ equal to
$U_0$.  For every finite field extension $K/k$ that is linearly
disjoint from $k'/k$, and thus also linearly disjoint from $\kappa/k$,
the composite morphisms 
$$
B'\times_{\PP^N_k}L\to B\times_{\PP^N_k} L \to L \to \SP K,
$$
and
$$
B'\times_{\PP^N_k} L \to B' \to \SP \kappa,
$$
establish that, as a $K$-scheme, $B'\times_{\PP^N_k} L$ factors through
the nontrivial field extension $K\otimes_k \kappa / K$.  Finally, $K$ is
algebraically closed in $K(B\times_{\PP^N_k}L)$, since
$B\times_{\PP^N_k}L$ is geometrically integral over $K$.  Thus, there
is no rational section of $h_L$ (and thus there is no inseparable
section).  

\mni
\textbf{Case IVb. Base Field Separably Closed in Extension. $k'=k$.}
In the final case, assume that $k$ is already algebraically closed in
$X$.  Set $k'$ equal to $k$.
Now repeat the proof of \cite[Corollary 2.2]{GS}.  
The composite morphism $u\circ h:B'\to \PP^N_k$ is generically
unramified.  Thus, repeating the argument above, there exists a dense
open subset $U\subset U_0$ such that for every $K/k$ and every $[L]\in
U(K)$, $B'\times_{\PP^N_k} L$ is geometrically integral over $K$.
Finally, $h_L$ is a finite, flat morphism of degree $a>1$ 
between geometrically
integral $K$-schemes.  Thus, there is no rational section.  This
completes the proof by Noetherian induction.
\end{proof}

\begin{defn} \label{defn-PACsec} \marpar{defn-PACsec}
As above, let $B$ be a finite type scheme over a field $k$, and assume
that $B$ is separated and normal.
Let $f:X\to B$ be a finite type morphism.  A \emph{PAC section} of
$f$ is an integral closed subscheme $Y\subset E$ such that the
restriction of $f$,
$f_Y:Y\to B$, is dominant with irreducible (but possibly nonreduced) 
geometric generic fiber.  The \emph{domain of definition} is the
maximal open subscheme of $B$ over which $f_Y:Y\to B$ is faithfully flat.
\end{defn}

\begin{cor} \label{cor-ffBertini} \marpar{cor-ffBertini}
For every finite type morphism $f:X\to B$, if $f$ has no PAC
section, then there exists a finite Galois extension $k'/k$ 
and a dense open
subset $U\subset \text{Grass}_k(\PP^r,\PP^N_k)$ such that for every
field extension $K/k$ that is linearly disjoint from $k'/k$,
for every
$[L]\in U(\SP K)$, the restriction $f_L:X\times_{\PP^N_k}L\to
B\times_{\PP^N_k} L$ also has no PAC section.
\end{cor}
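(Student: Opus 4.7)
The plan is to identify PAC sections of $f$ with rational sections of a generically-\'etale ``component scheme'' over $B$, and then apply Theorem~\ref{thm-ffBertini} to that cover. After reducing to the case that $X$ is integral and $f:X\to B$ is dominant -- by handling each reduced, dominant, irreducible component of $X$ separately (a finite list), and noting that for generic $L$ the non-dominant components of $X$ give non-dominant components of $X_L$ and so cannot contribute PAC sections of $f_L$ either -- let $E$ denote the separable closure of $k(B)$ in $k(X)$, a finite separable field extension, and let $Z'\to B$ be the normalization of $B$ in $E$. Then $Z'\to B$ is a finite, generically-\'etale morphism of integral $k$-schemes, with $k(Z')=E$.

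The key claim is that $f$ admits a PAC section if and only if $Z'\to B$ admits a rational section. Indeed, any PAC section $Y\subset X$ has $Y_\eta\otimes\bar{k(B)}$ irreducible, hence contained in a single geometric component of $X_\eta\otimes\bar{k(B)}$; Galois invariance of $Y$ forces this component to be fixed by $\mathrm{Gal}(\bar{k(B)}/k(B))$. This Galois group acts transitively on the set $\mathrm{Hom}_{k(B)}(E,\bar{k(B)})$ parametrizing the geometric components of $X_\eta\otimes\bar{k(B)}$, so a fixed component can exist only when the set is a singleton -- equivalently, when $E=k(B)$, which is the same as the existence of a (necessarily unique) rational section of $Z'\to B$. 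Conversely, when $E=k(B)$ the reduced scheme $X_{\mathrm{red}}$ is itself a PAC section of $f$.

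Applying Theorem~\ref{thm-ffBertini} to $Z'\to B$ yields a dense open $U_0\subset\text{Grass}_k(\PP^r,\PP^N)$ and a finite Galois $k'/k$ such that for every $[L]\in U_0(\SP K)$ with $K$ linearly disjoint from $k'$, the inseparable (and in particular the rational) sections of $Z'\to B$ correspond bijectively to those of $Z'\times_B B_L\to B_L$. Since $f$ has no PAC section, $Z'\to B$ has no rational section, and so $Z'\times_B B_L\to B_L$ has no rational section for such $[L]$.

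The remaining -- and principal technical -- step is to verify that the key claim is compatible with base change: for $[L]$ in a dense open $U\subset U_0$, the pullback $Z'\times_B B_L\to B_L$ should play the role of ``component scheme'' for $f_L$, so that $f_L$ admits a PAC section if and only if $Z'\times_B B_L\to B_L$ admits a rational section. I would establish this by combining Jouanolou's Bertini theorems applied to $X\to\PP^N_k$ and $Z'\to\PP^N_k$ (ensuring that for $[L]\in U$ both $X_L$ and $Z'_L$ are geometrically integral with the expected \'etale structure) with the observation that the generic fiber of $X_L\to Z'_L$ remains geometrically irreducible, inherited from the generic fiber of $X\to Z'$ by flat base change. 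Then the key claim applies to $f_L$ as well, and setting the dense open in the corollary equal to this smaller $U$ while keeping the finite Galois extension $k'$ completes the proof.
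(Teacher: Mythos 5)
Your key claim --- that for $X$ integral dominating $B$, the morphism $f$ admits a PAC section if and only if the normalization $Z'$ of $B$ in the separable closure $E\subset k(X)$ of $k(B)$ admits a rational section --- is false as stated. Take $B=\AAA^2_\QQ$ with coordinates $(s,t)$, take $X=\SP\QQ[s,t,x,y]/(x^2+y^2)$, and take $Y\subset X$ to be the locus $x=y=0$. Then $X$ is integral with $k(X)=\QQ(i)(s,t,x)$, so $E=\QQ(i)(s,t)\neq k(B)$ and $Z'\to B$ has no rational section; yet $Y\to B$ is an isomorphism with geometrically irreducible (one-point) generic fiber, hence a PAC section of $f$. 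The gap is in the sentence ``Galois invariance of $Y$ forces this component to be fixed'': if $Y_\eta\otimes\bar{k(B)}\subset C_1$, Galois invariance of $Y$ only gives $Y_\eta\otimes\bar{k(B)}\subset\bigcap_\sigma\sigma(C_1)$, which does not force $C_1$ to be fixed. When the components of $X_\eta\otimes\bar{k(B)}$ overlap --- as they do whenever $X_\eta$ fails to be (geometrically) normal --- a PAC section can live inside the overlap and escape this analysis. In the example $Y_\eta\otimes\bar{k(B)}$ is the origin, lying on both conjugate branches $y=\pm ix$. Your argument would be correct if the components of $X_\eta\otimes\bar{k(B)}$ were pairwise disjoint, i.e.\ under a normality hypothesis you did not establish and which can fail.

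The missing ingredient is exactly what the paper supplies first: replace $X$ by a finite type, surjective monomorphism $X'\to X$ whose connected components are regular (a stratification of $X$). PAC sections of $f$ correspond to PAC sections of $X'\to B$ because the generic point of a PAC section lifts uniquely along such a monomorphism, with the same residue field. Each regular component $X_i$ is normal, so after shrinking $B$ the morphism $X_i\to B$ genuinely factors through the integral closure $B_i$ of $B$ in the separable closure of $k(B)$ in $k(X_i)$, and (again after shrinking) $X_i\to B_i$ has geometrically irreducible fibers everywhere, so that PAC sections of $X_i\to B$ and of the finite \'etale cover $B_i\to B$ correspond; only at that point does one invoke Theorem~\ref{thm-ffBertini} for $\coprod_i B_i\to B$. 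In the counterexample, the singular locus $\{x=y=0\}$ becomes its own regular stratum with $k(X_i)=k(B)$, so the associated $B_i$ equals $B$ and the PAC section is correctly detected. Your final paragraph about compatibility with $L$-base change would then be sound (and matches what the paper does via Jouanolou), but only after this reduction to regular strata that your write-up skips.
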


\begin{proof}
There exists a finite type, surjective monomorphism $i:X'\to X$ such
that every 
connected component $X'_i$ of $X'$ is regular and separated.  Every
PAC section of $X'\to B$ maps under $i$ to a PAC section of $X\to B$.
Conversely, for every PAC section $Y$ of $X\to B$, the generic point
$\eta_Y$ lifts uniquely to $X'$, and the closure of this generic point
in $X'$ gives a PAC section of $X'\to B$.  The same argument holds for
$X'\times_{\PP^N_k} L \to X\times_{\PP^N_k} L \to B\times_{\PP^N_k}
L$.  Thus, up to replacing $X$ by $X'$, assume that every connected
component $X_i$ of $X$ is regular and separated, and assume that 
each restriction morphism
$f|_{X_i}:X_i\to B$ has no PAC section.

\mni  
Up to shrinking $B$,
assume that every $f|_{X_i}:X_i\to B$ is flat, and that $B$ is regular.  
The separable closure of
$k(B)$ in $k(X_i)$ is a finite, separable extension of $B$.  Denote
by $h_i:B_i\to B$ the integral closure of $B$ in this finite, separable
extension.  
Since
$B$ is finite type over a field, $B$ is excellent.  Thus, $h_i$
is finite.  By construction, $h_i$ is generically \'{e}tale.  Up to
shrinking $B$ further, assume that $h_i$ is everywhere finite and
\'{e}tale.  

\mni
Since $X_i$ is regular, it is normal.  Thus, $f|_{X_i}$
factors through $h_i$, i.e., there exists a finite type, dominant
morphism $g_i:X_i\to B_i$ such that $f_{X_i}$ equals $h_i\circ g_i$.    
By
construction, the geometric generic fiber of $g_i$ is
irreducible.  By \cite[Th\'{e}or\`{e}me 4.10, 6.10]{Jou},
there exists a dense open
subscheme of $B_i$ over which $X_i$ is faithfully flat with
irreducible geometric fibers.  Since $B_i$ is finite over $B$, up to
shrinking $B$ further, assume that this dense open subscheme equals
all of $B_i$.  Thus, every PAC section of $f|_{X_i}$ maps under $g_i$ to
a PAC section of $h_i$.  Conversely, for every PAC section $Z$ of
$h_i$, the inverse image $g_i^{-1}(Z)$ is a PAC section of $f|_{X_i}$.
In particular, since $f|_{X_i}$ has no PAC section, also $h_i$ has no PAC
section.  Since $h_i$ is finite, PAC sections are the
same as inseparable sections.  So $h_i$ has no inseparable sections.  Denote
by $h:B'\to B$ the disjoint union of the finitely many morphisms $h_i$.

\mni
By Theorem \ref{thm-ffBertini}, there exists a finite Galois extension
$k'/k$ and a
dense open subset $U\subset \text{Grass}_k(\PP^r,\PP^N)$ such that for
every field extension $K/k$ that is linearly disjoint from $k'/k$,
for every $[L]\in U(\SP K)$, also $h_L$ has no inseparable section.  Every
PAC section of $f_L:X\times_{\PP^N_k} L \to B\times_{\PP^N_k}L$ maps under
$g$ to a PAC section of $h_L$.  Since $h_L$ is finite and
has no inseparable section, it has no PAC section.  Thus, $f_L$ has no
PAC section.
\end{proof}


\section{Fields that Admit Rational Points on Specializations of
Rationally Connected Varieties} \label{sec-RCsolv} 
\marpar{sec-RCsolv}

\mni
After
James Ax introduced PAC fields,
he asked whether every specialization of Fano hypersurfaces in
$\mathbb{P}^{n-1}$  over
a perfect PAC field $L$ has an $L$-point \cite[Problem 3]{Ax}.  A 
projective variety over a field $F$ is
\emph{rationally connected}, resp. \emph{separably rationally
  connected}, 
if for every algebraically closed field
extension $E/F$, resp. for every separably closed field extension
$E/F$, 
every pair of $E$-points of the variety is
contained in the image of an $E$-morphism from $\mathbb{P}^1_{E}$.  In
characteristic $0$, these two definitions agree.
Sufficiently general Fano
hypersurfaces are \emph{separably rationally connected
  varieties}, \cite{KMM} (in characteristic $0$), \cite{Zhu2} (in
arbitrary characteristic).  Thus, Ax was asking about rational points
on specializations of certain separably rationally connected varieties.

\mni
The most common separably rationally connected varieties
have unobstructed deformations, e.g., all Fano manifolds in
characteristic $0$, (standard) projective homogeneous spaces in all
characteristics, 
Fano complete intersections of ample divisors in 
(standard) projective homogeneous
spaces in all characteristics, etc.
However, to formulate results that
also apply to those rationally connected varieties with obstructed
deformations, it is necessary to address \emph{ramification},
particularly in mixed characteristic.  For DVRs
$(\Lambda,\mf{m}_{\Lambda})$ and $(R,\mf{m}_R)$, a local homomorphism
$\phi:\Lambda \to R$ is \emph{regular} if 
\begin{enumerate}
\item[(i)] $\phi(\mf{m}_{\Lambda})R$ equals $\mf{m}_R$, i.e., $\phi$
  is \emph{weakly unramified} (note that this implies that $\phi$ is
  injective), 
\item[(ii)] the residue field extension $\Lambda/\mf{m}_\Lambda \to
  R/\mf{m}_R$ is separable (note that this holds automatically if
  $\Lambda/\mf{m}_\Lambda$ is perfect), and 
\item[(iii)] the fraction field extension is separable (note that this
  holds automatically if the fraction field has characteristic $0$).
\end{enumerate}
If the local homomorphism is essentially of finite type and if
$\Lambda$ is complete, then the first two hypotheses imply the third.
The importance of regularity here has to do with \emph{smooth
  parameter spaces}.

\begin{defn} \label{defn-param} \marpar{defn-param}
For an integral scheme $S$, a \emph{parameter space} over $S$ is a triple
$(M \to S, f_M:\Xx_M\to M,\Ll)$ of a smooth $S$-scheme $M$ of
pure relative dimension $m$, a flat, projective morphism $f_M$, and an
invertible sheaf $\Ll$ on the fiber of $\Xx_M$ over $\SP
\text{Frac}(S)$.  
\end{defn}

\begin{lem} \label{lem-ft} \marpar{lem-ft}
Let $(R,\mf{m}_R)$ be a DVR, let
$\Xx_R$ be a flat, projective $R$-scheme,
and let $\Ll_{\text{Frac}(R)}$ be an invertible sheaf on
$\Xx_{\text{Frac}(R)}$.  
Let $(\Lambda,\mf{m}_\Lambda)\to (R,\mf{m}_R)$ be a local homomorphism
of DVRs that is regular.  
There
exists a parameter space over $S=\SP \Lambda$, and there exists a dominant
$S$-morphism $\zeta:\SP R \to M$ such that $(\Xx_R,\Ll_R)$ is the
pullback by  $\zeta$ of $(\Xx_M,\Ll)$. 
\end{lem}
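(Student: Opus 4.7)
The key tool is \emph{N\'eron-Popescu desingularization}: since $\phi:\Lambda\to R$ is a regular homomorphism of Noetherian rings, $R$ is a filtered direct limit $R=\varinjlim_{i\in I}A_i$ of smooth $\Lambda$-algebras of finite type. The plan is to spread the data $(\Xx_R,\Ll_{\Fr(R)})$ out to some $A_i$, then to cut $\SP A_i$ down to an irreducible smooth slice through which $\SP R$ maps dominantly.

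Because $\Xx_R\to\SP R$ is flat, projective, and of finite presentation, the standard limit arguments of EGA IV, \S 8 produce an index $i$ and a flat projective $A_i$-scheme $\mathcal{X}_{A_i}$ whose base change along $A_i\to R$ recovers $\Xx_R$. Since $\phi$ is weakly unramified, a uniformizer of $\Lambda$ still generates $\mfm_R$, so inverting it identifies $\Fr(R)=\varinjlim\bigl(A_i\otimes_\Lambda\Fr(\Lambda)\bigr)$; as Picard groups of flat projective families commute with filtered colimits of the base ring, after enlarging $i$ the line bundle $\Ll_{\Fr(R)}$ descends to a line bundle $\widetilde\Ll$ on $\mathcal{X}_{A_i,\Fr(\Lambda)}$. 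Write $A:=A_i$.

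Let $\mfq:=\ker(A\to R)$, so $B:=A/\mfq$ is an integral $\Lambda$-subalgebra of $R$. The fraction field $\Fr(B)\hookrightarrow\Fr(R)$ is a subextension of the separable extension $\Fr(\Lambda)\to\Fr(R)$ supplied by clause (iii) of regularity, hence $B$ is smooth over $\Lambda$ at its generic point. Similarly, using clauses (i) and (ii), one shows that $B$ is smooth over $\Lambda$ at the preimage $p:=B\cap\mfm_R$ of the closed point of $\SP R$. Let $M\subset\SP B$ be an irreducible affine open neighborhood of $p$ on which $B$ is smooth over $\Lambda$; then $M$ is smooth over $S=\SP\Lambda$ of some pure relative dimension $m$. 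The composition $A\twoheadrightarrow B\hookrightarrow R$ factors through $M$, giving a morphism $\zeta:\SP R\to M$; the image of the generic point of $\SP R$ is the generic point of $M$, so $\zeta$ is dominant. Setting $\Xx_M:=\mathcal{X}_A\times_A M$ and $\Ll:=\widetilde\Ll\big|_{\Xx_{M,\Fr(\Lambda)}}$ yields the desired parameter space, and $\zeta^*(\Xx_M,\Ll)=(\Xx_R,\Ll_{\Fr(R)})$ by compatibility of base change.

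The main obstacle is the assertion that $B=A/\mfq$ is smooth over $\Lambda$ in a neighborhood of $p$, rather than merely at the generic point. All three clauses in the definition of a regular local homomorphism enter essentially here, whereas N\'eron-Popescu alone only delivers the smooth ambient $A$; the smoothness of the integral quotient $B$ at the specific point $p$ is precisely the place where the additional structure of $\Lambda\to R$ as a \emph{regular} local homomorphism (not just a flat one) is used.
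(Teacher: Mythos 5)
The central step of your argument is the claim that $B = A/\mfq$, the image in $R$ of a smooth $\Lambda$-algebra $A=A_i$ supplied by Popescu's theorem, is smooth over $\Lambda$ at $p = B\cap\mfm_R$.  You flag this step (``one shows that $B$ is smooth over $\Lambda$ at $p$''), but you do not prove it, and it is false in general.  Images of smooth $\Lambda$-algebras inside $R$ need not be smooth, or even regular, at the trace of $\mfm_R$.  For instance, take $\Lambda = k[t]_{(t)}$ and $R = k(\alpha)[t]_{(t)}$ (weakly unramified, separable residue and fraction field extensions), and let $A=\Lambda[x,y]$ map to $R$ by $x\mapsto t\alpha$, $y\mapsto t\alpha^2$.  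Then $\mfq=(x^2-ty)$, so $B=\Lambda[x,y]/(x^2-ty)\hookrightarrow R$; at $p=(t,x,y)=B\cap\mfm_R$ the local ring $B_p$ has $\dim\mfm_p/\mfm_p^2=3>2=\dim B_p$, so $B$ is not even regular there, let alone $\Lambda$-smooth.  Popescu gives a smooth ambient $A$ with a (generally non-injective) map to $R$; passing to the image to make $\zeta$ dominant is exactly where the smoothness can be lost, and nothing in clauses (i)--(iii) repairs it for free.

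The paper avoids this by never passing through a quotient: it spreads out to a finitely generated $\Lambda$-\emph{subalgebra} $A\subset R$ (so that $\SP R\to\SP A$ is automatically dominant) with $A\otimes_\Lambda\Fr(\Lambda)$ smooth, and then applies the N\'eron-blowup form of desingularization to $A$ itself, iterating $A\mapsto A[\mfp/\pi]$ with $\mfp=\mfm_R\cap A$; each blowup remains a $\Lambda$-subalgebra of $R$ because $\mfp\subset\pi R$, and after finitely many steps $A$ becomes $\Lambda$-smooth near $\mfp$, after which one inverts one element to get the parameter space $M=\SP A$.  In other words, what your proof delegates to an unproved lemma about $B$ is precisely the nontrivial content (N\'eron desingularization of the subalgebra) that the paper's proof carries out.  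If you replace your smoothness claim for $B$ by an explicit application of the N\'eron-blowup theorem to $B$ (not to the ambient Popescu algebra $A$), the argument becomes essentially the same as the paper's, and the detour through Popescu's filtered-colimit statement is unnecessary.
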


\begin{proof}
For any subring $L$ of $R$,
by the usual limit arguments, there exists a subring
$A\subset R$ containing $L$ such that $L\to A$ is finitely
generated, there exists $\Xx_A\to A$ a flat, projective scheme, and
there exists an invertible sheaf on $\Xx_{\text{Frac}(A)}$.  Without
loss of generality, also assume that $A$ contains a generator for the
principal ideal $\mf{m}_R \subset R$.
\mni
Now, set $L$ equal to $\phi(\Lambda)$.
Since
$\text{Frac}(A)$ is a subextension of $\text{Frac}(\Lambda)\to
\text{Frac}(R)$, which is a separable extension by (iii), also
$\text{Frac}(\Lambda)\to \text{Frac}(A)$ is separably generated.
Since $A\otimes_\Lambda \text{Frac}(\Lambda)$ is finitely generated
over $\text{Frac}(\Lambda)$ with separably generated fraction field, 
there exists $a\in 
A\setminus\{0\}$ such that $A[1/a]\otimes_\Lambda
\text{Frac}(\Lambda)$ is a smooth algebra over $\text{Frac}(\Lambda)$.
Every uniformizing element $\pi$ of $\Lambda$ is also a uniformizing
element of $R$ by (i).  Thus, for $a\in A\subset R$, there exists an
integer $e\geq 0$ and there exists $u\in R\setminus \mf{m}_R$ such
that $a$ equals $u\pi^e$.  Adjoining $u$ to $A$ does not change
$A\otimes_\Lambda \text{Frac}(\Lambda)$.  Thus, assume that $u$ is in
$A$.  Then $A[1/u]\otimes_\Lambda \text{Frac}(\Lambda)$ equals
$A[1/a]\otimes_\Lambda \text{Frac}(\Lambda)$.  Since $u$ is in
$R\setminus \mf{m}_R$, also $1/u$ is in $R\setminus \mf{m}_R$.  Thus,
adjoin $1/u$ to $A$, and assume that $A\otimes_\Lambda
\text{Frac}(\Lambda)$ is smooth over $\text{Frac}(\Lambda)$.

\mni
Since $A$ is a finite type, flat $\Lambda$-algebra such that
$A\otimes_\Lambda \text{Frac}(\Lambda)$ is smooth over
$\text{Frac}(\Lambda)$, and by hypotheses (i) and (ii), there exists a
N\'{e}ron desingularization, \cite[Tag 0BJ6]{stacks-project}. 
Precisely, there exist finitely many ``N\'{e}ron blowups'', $A\mapsto
A[\mf{p}/\pi]$ where $\mf{p}$ equals $\mf{m}_R\cap A$, after which
$\Lambda \to A$ is smooth at $\mf{p}$, i.e., $A/\mf{m}_AA$ is smooth
over $\Lambda/\mf{m}_A$ at the prime $\mf{p}/\mf{m}_AA$.  Thus, there
exists $v\in A\setminus \mf{p}$ such that $A[1/v]$ is smooth over
$\Lambda$.  Since $v$ is in $R\setminus \mf{m}_R$, $1/v$ is also in
$R\setminus \mf{m}_R$.  Thus, $A[1/v]$ is a subring of $R$.  So after
replacing $A$ by $A[1/v]$, now $A$ is a subring of $R$ that is a
finitely generated $\Lambda$-algebra that is smooth.  Define $M$ to be
$\SP A$.   
\end{proof}

\begin{defn} \label{defn-primereg} \marpar{defn-primereg}
A \emph{prime finite}
DVR $(\Lambda,\mf{m}_\Lambda)$
is a DVR
whose residue field $\Lambda/\mf{m}_\Lambda$ is a finite extension of
the prime subfield, i.e., either the residue field is  
a finite field if the characteristic
is positive, or it is a number field if the characteristic is $0$.  A 
DVR $(R,\mf{m}_R)$ is
\emph{prime regular}, or
\emph{regular over a DVR whose residue field is finite over
  the prime subfield}, if there
exists a prime finite DVR $(\Lambda,\mf{m}_\Lambda)$ 
and a local homomorphism $\phi:\Lambda\to R$ that is regular.  
\end{defn}

\begin{lem} \label{lem-equi} \marpar{lem-equi}
Every equicharacteristic DVR is prime regular.
\end{lem}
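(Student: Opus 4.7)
My plan is to construct explicitly a prime finite DVR $\Lambda$ together with a regular local homomorphism $\phi\colon \Lambda \to R$. The natural candidate: since $R$ is equicharacteristic, its prime subfield $k_0$ (namely $\QQ$ or $\mathbb{F}_p$) embeds in $R$; pick any uniformizer $\pi \in \mfm_R$ and set $\Lambda := k_0[t]_{(t)}$. This $\Lambda$ is a DVR with uniformizer $t$ and residue field $k_0$, so it is prime finite.

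Before defining $\phi$ by $t \mapsto \pi$, I would first verify that $\pi$ is transcendental over $k_0$: if $\pi$ satisfied a minimal polynomial $T^n + a_{n-1}T^{n-1} + \cdots + a_0 \in k_0[T]$ with $a_0 \neq 0$, then rearranging would give $a_0 = -\pi\,(\pi^{n-1} + \cdots + a_1) \in \mfm_R$, contradicting $a_0 \in k_0^\times \subset R^\times$. With transcendence in hand, $k_0[t] \hookrightarrow R$; and for any $f \in k_0[t] \setminus (t)$ one has $f(\pi) \equiv f(0) \not\equiv 0 \pmod{\mfm_R}$, so $f(\pi) \in R^\times$. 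Hence $\phi$ extends uniquely to a local homomorphism $\Lambda \to R$.

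It then remains to verify the three regularity conditions. Condition (i) is immediate from $\phi(t) = \pi$. Condition (ii) is free because $k_0$ is perfect, so any extension of $k_0$ is separable. For condition (iii), the fraction field extension $k_0(t) \hookrightarrow \Fr(R)$ is automatically separable in characteristic $0$. In characteristic $p$, I would verify it by showing that $\Fr(R) \otimes_{k_0(t)} k_0(t^{1/p}) \cong \Fr(R)[x]/(x^p - \pi)$ is reduced; in fact this ring is a field, since any $p$-th power in $\Fr(R)$ has valuation divisible by $p$ while $\pi$ has valuation $1$, so $x^p - \pi$ is irreducible over $\Fr(R)$.

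The main potential obstacle is condition (iii) in positive characteristic, where $\Fr(R)$ could a priori be a very wild extension of $\mathbb{F}_p(\pi)$; however, the valuation-theoretic argument above shows that the only possible source of inseparability, namely $\pi$ becoming a $p$-th power in $\Fr(R)$, cannot occur. Conditions (i) and (ii) are essentially bookkeeping, so this completes the plan.
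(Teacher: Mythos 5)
Your proof is correct and follows essentially the same approach as the paper: embed $\Lambda = k_0[t]_{(t)}$ into $R$ by sending $t$ to a uniformizer $\pi$, using the same transcendence argument, and then verify the three regularity conditions (with (i) and (ii) being immediate). The only difference is in checking condition (iii) in characteristic $p$: where the paper performs an explicit valuation computation to show that $\alpha^p \neq 0$ for every nonzero $\alpha = \sum a_i t^i$ in $\Fr(R)[t]/(t^p-\pi)$, you observe more directly that $x^p - \pi$ is irreducible over $\Fr(R)$ because $\pi$ has valuation $1$ and hence is not a $p$-th power, so the tensor product $\Fr(R)\otimes_{k_0(t)} k_0(t^{1/p})$ is a field. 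This is a cleaner route to the same conclusion and relies on the same valuation-theoretic input.
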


\begin{proof}
Let $(R,\mf{m}_R)$ be an equicharacteristic DVR.  Denote by $F\subset
R$ the prime subfield.  Let $\theta\in \mf{m}_R$ be a generator.
Since $\theta$ is not an invertible element of $R$, $\theta$ is not in
$F$.  
In fact,
$\theta$ is transcendental over $F$, for otherwise a minimal
polynomial $m_\theta(t) = t^d + \dots + a_1\theta+ a_0$ has
degree $d\geq 1$ and gives a relation $1=
-a_0^{-1}\theta(a_1+\dots+\theta^{d-1})$.  This implies that $\theta$ is
invertible in $R$.  Thus, $F[\theta]$ is a copy of the polynomial ring
in $R$.  Since the multiplicative system $F[\theta]\setminus \theta
F[\theta]$ is contained in $R\setminus \mf{m}_R$, $R$ contains the
ring of fractions $\Lambda = F[\theta]_{\langle \theta \rangle}$.  The
inclusion of DVRs $(\Lambda,\mf{m}_\Lambda)\to (R,\mf{m}_R)$ is a
local homomorphism.  It is weakly unramified since $\theta\in
\mf{m}_\Lambda$ is a generator of $\mf{m}_R$.  Since the
residue field $F$ of $\Lambda$ is perfect, every field extension of
the residue field is separable.  Thus, it only remains to check that
$K=\text{Frac}(R)$ is separable over $F(\theta)$.

\mni
In characteristic $0$ this is automatic.  Assume the characteristic
equals $p$.  Since $F(\theta)^{1/p}$ equals $F(\theta)[t]/\langle
t^p-\theta \rangle$, we need to prove that $A=K[t]/\langle t^p-\theta
\rangle$ contains no nonzero nilpotent $\alpha$ with $\alpha^p$ equal to
$0$.  The $K$-vector space $A$ is free with basis
$(1,t,\dots,t^{p-1})$.  So every element $\alpha$ of $A$ has a unique
decomposition, 
$$
\alpha = a_0 + a_1t + \dots + a_{p-1}t^{p-1}.
$$

\mni
Let $\alpha$ be a nonzero element, i.e., some $a_i$ is nonzero.  
Denote by $e\in \ZZ$ the minimum
of the valuations of those $a_i\in K$ that are nonzero.  Then up to
replacing $\alpha$ by 
$\theta^{-e}\alpha$, assume that every $a_i$ is in $R$, and at least
one $a_i$ has valuation $0$.  Let $\ell$ be the minimal $i$ with
$0\leq i \leq p-1$ such that $a_i$ has valuation $0$.  Then
$a_\ell$ is invertible, so that also $a_\ell^p$ is invertible.
Therefore $a_\ell^p\theta^\ell$ has valuation $\ell$.  

\mni
On the other hand, for every $m$ with $0\leq m\leq p-1$ and $m\neq
\ell$, either $a_m$ equals $0$ so that $a_m^p\theta^m$ equals $0$, or
$\text{val}(a_m)>0$ so that $a_m^p\theta^m$ has valuation $\geq p >
\ell$, or $\text{val}(a_m)$ equals $0$ but $m>\ell$ so that again
$a_m^p\theta^m$ has valuation $m>\ell$.  So also the sum
$$
\sum_{0\leq m \leq p-1, m\neq \ell} a_m^p\theta^m,
$$ 
is either zero or has valuation $\geq \ell+1$.  Thus the full sum,
$$
\alpha^p = a_\ell^p \theta^\ell + \sum_{0\leq m\leq p-1,m\neq \ell}
a_m^p \theta^m
$$
is nonzero of valuation $\ell$.  Therefore, $A$ contains no nonzero
nilpotent elements, and $K$ is separable over $F(\theta)$.
\end{proof}

\mni
Because of the lemma, the only DVRs that are not prime regular are
mixed characteristic DVRs that are not weakly unramified over a Cohen
ring, e.g., for $e>1$, the localization of $\ZZ[x,y]/\langle y^e-px \rangle$ at
the height one prime generated by $p$ and $y$.  

\mni
For a DVR $(\Lambda,\mf{m}_\Lambda)$, for a regular local
homomorphism $(\Lambda,\mf{m}_\Lambda) \to (R,\mf{m}_R)$, 
for every pair $(\Xx_R\to \SP R,\Ll_{\text{Frac}(R)})$ of a flat,
projective $R$-scheme $\Xx_R$ and an invertible sheaf on the generic
fiber, there exists a parameter space $(M\to \SP \Lambda,f_M:\Xx_M\to
M,\Ll)$ and a $\Lambda$-morphism $\zeta:\SP R\to M$ pulling back
$(\Xx_M,\Ll)$ to $(\Xx_R,\Ll_{\text{Frac}(R)})$,  
by
Lemma \ref{lem-ft}.  Thus every extension field of $R/\mf{m}_R$ admits
a morphism to $M_0=M\times_{\SP \Lambda} \SP
(\Lambda/\mf{m}_\Lambda)$, and this morphism is even dominant.

\mni
For a DVR $(\Lambda,\mf{m}_\Lambda)$ and for a parameter
space as above, 
let $F$ be a field (not necessarily finite), 
let $E$ be the function field of a
geometrically integral $F$-scheme of dimension $d$, 
and let $z:\SP E\to
M_0$ be a morphism, not necessarily dominant.  
Let $M^o_{\eta}$ be a specified dense open subset of the
generic fiber $M_\eta=M\times_{\SP \Lambda} \SP \text{Frac}(\Lambda)$
(for instance the entire generic fiber, but there are applications
when $M^o_{\eta}$ is a smaller dense open).

\begin{defn} \label{defn-int} \marpar{defn-int}
An \emph{integral extension} of $z$ is a
triple 
$$
(B\to \SP \Lambda, \pi^o_B:C^o_B\to B,z_B:C^o_B\to M),
$$
and a pair
$$
(\psi_B:\text{Frac}(B_0)\to F,\psi_E:F(C^o_F)\to E)
$$ 
consisting of a smooth,
quasi-projective, surjective morphism $B\to \SP \Lambda$ with 
integral closed fiber and generic fiber,
a 
smooth, quasi-projective, surjective morphism $\pi^o_B$ of relative
dimension $d$ with geometrically integral fibers, 
and a $\Lambda$-morphism $z_B$ together with 
a field 
homomorphism $\psi_B:\text{Frac}(B_0)\to F$ for the fraction field field
of the integral scheme $B_0=B\otimes_\Lambda
\Lambda/\mf{m}_\Lambda$ and 
an isomorphism of $F$-extensions 
$\psi_C:
F(C^o_F) \to E$
for the fraction field of the integral scheme
$C^o_F=C^o_B\times_B \SP F$ 
such that
\begin{enumerate}
\item[(i)] $z_B^{-1}(M^o)$ contains the generic fiber $C^o_B\times_{\SP \Lambda}\SP
  \text{Frac}(\Lambda)$ 
\item[(ii)] the morphism $z$ equals the
composition of $\SP \psi:\SP E \to \SP F(C_F)$ and the morphism $\SP
F(C^o_F) \to M$ induced by $z_B$.  
\end{enumerate}
\end{defn}

\begin{rmk} \label{rmk-integralreg} \marpar{rmk-integralreg}
For an integral extension, the stalk $R$ of the structure sheaf of $B$
at the generic point of the closed fiber $B_0=B\times_{\SP \Lambda}
\SP (\Lambda/\mf{m}_\Lambda)$ is a DVR that is regular over
$\Lambda$ (and also essentially of finite type) since $B$ is smooth
over $R$.  
\end{rmk}

\begin{lem} \label{lem-extend} \marpar{lem-extend}
For a prime finite DVR $(\Lambda,\mf{m}_\Lambda)$, for a parameter
space over $\Lambda$ and a specified dense open subset $M^o_\eta$ of
the generic fiber, for every pair $(E/F,z:\SP E\to M)$ as above,
there exists an integral extension. 
\end{lem}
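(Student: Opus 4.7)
The strategy is a spread-and-lift argument.  First spread $(E/F, z:\SP E\to M_0)$ over a smooth finite-type base on the residue-field side; then lift to $\Lambda$.  The morphism $z$ together with $M_0\to\SP k$ (where $k:=\Lambda/\mf{m}_\Lambda$) induces a $k$-algebra structure on $E$; since the $F$-scheme whose function field is $E$ is geometrically integral, $F$ is algebraically closed in $E$, so the image of $k$ in $E$ lies in $F$, giving $F$ a compatible $k$-algebra structure.  Take a projective geometrically integral $F$-model $Y$ of $E/F$, so $z$ extends to a rational map from $Y$ to $M_0$ defined on a dense open $C^o_F\subset Y$.  Choose a finitely generated $k$-subalgebra $A_0\subset F$ over which everything spreads out: a flat projective $A_0$-model $\mathcal{Y}\to\SP A_0$ of $Y$, an open $\mathcal{C}^o\subset\mathcal{Y}$ restricting to $C^o_F$ over $F$, and a morphism $z_{A_0}:\mathcal{C}^o\to M_0$ restricting to $z|_{C^o_F}$.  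Inverting finitely many elements of $A_0$, arrange that $B_0:=\SP A_0$ is smooth and geometrically integral over $k$ and that $\mathcal{C}^o\to B_0$ is smooth surjective of relative dimension $d$ with geometrically integral fibers.  The inclusion $A_0\hookrightarrow F$ supplies $\psi_B:\text{Frac}(A_0)\hookrightarrow F$, and $F(\mathcal{C}^o\otimes_{A_0}F)=E$ supplies $\psi_C$.

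Second, lift the smooth affine $k$-scheme $B_0$, the smooth family $\mathcal{C}^o\to B_0$, and the morphism $z_{A_0}$ from $k$ to $\Lambda$.  Writing $A_0=k[x_1,\ldots,x_n]/(f_1,\ldots,f_r)$ with $(f_i)$ a regular sequence whose Jacobian has rank $r$ on $B_0$, lift each $f_i$ to $\tilde f_i\in\Lambda[x_1,\ldots,x_n]$; since regularity of a sequence in a polynomial ring over $\Lambda$ is detected modulo $\mf{m}_\Lambda$, the quotient $\tilde A:=\Lambda[x_1,\ldots,x_n]/(\tilde f_i)$ is $\Lambda$-flat and Cohen--Macaulay with closed fiber $A_0$.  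The Jacobian criterion plus openness of smoothness then yields a smooth affine open $B\subset\SP\tilde A$ containing $B_0$, and further shrinking makes $B\to\SP\Lambda$ surjective with integral generic and closed fibers (smoothness plus $\Lambda$-flatness plus the geometrically integral closed fiber give geometric integrality of the generic fiber).  The same Jacobian-lifting argument produces a smooth morphism $\pi^o_B:C^o_B\to B$ with geometrically integral fibers of relative dimension $d$ and closed fiber $\mathcal{C}^o\to B_0$.  Using an embedding $M\hookrightarrow\mathbb{P}^N_\Lambda$ and lifting locally the homogeneous forms defining $z_{A_0}$, construct a $\Lambda$-morphism $z_B:C^o_B\to M$ extending $z_{A_0}$; smoothness of $M$ over $\Lambda$ guarantees that such a lift exists on an open of $C^o_B$ containing the closed fiber.

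It remains to enforce condition~(i), that $z_B$ send the generic fiber $C^o_{B,\eta}$ entirely into $M^o_\eta$.  Infinitesimal lifts of $z_B$ extending $z_{A_0}$ form a torsor under $H^0(\mathcal{C}^o,z_{A_0}^*T_{M_0})\otimes_k(\mf{m}_\Lambda/\mf{m}_\Lambda^2)$ (and analogously for higher orders), giving enough freedom to arrange that the generic point of $C^o_{B,\eta}$ maps into $M^o_\eta$.  Then $Z:=C^o_{B,\eta}\setminus z_B^{-1}(M^o_\eta)$ is a proper closed subset of the generic fiber, and its schematic closure $\overline{\pi^o_B(Z)}$ in $B$ is $\Lambda$-flat of relative dimension bounded by that of $Z$, so meets $B_0$ in a proper closed subset.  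Replacing $B$ by the smooth affine open $B\setminus\overline{\pi^o_B(Z)}$ preserves integrality of both fibers and forces $z_B$ to send the entire generic fiber of $C^o_B$ into $M^o_\eta$, producing the desired integral extension.  The main obstacle is the combined lifting step: the Jacobian/Cohen--Macaulay lift of the smooth affine picture from $k$ to $\Lambda$ (for both the base and the family over it), together with the torsor-of-lifts analysis plus subsequent shrinking needed to secure condition~(i).
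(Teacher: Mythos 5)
Your first step — spreading $(E/F,z)$ out over a finitely generated $k$-subalgebra $A_0\subset F$ and lifting $B_0=\SP A_0$ to a smooth affine $\Lambda$-scheme $B$ by writing $A_0$ as a regular-sequence quotient of a polynomial ring and lifting coefficients — matches the paper's proof.  The divergence comes in the two subsequent steps, and in both places there are genuine gaps.

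First, you propose to lift the family $\mathcal{C}^o\to B_0$ and the morphism $z_{A_0}:\mathcal{C}^o\to M_0$ directly, the latter by "lifting locally the homogeneous forms defining $z_{A_0}$," invoking smoothness of $M$ over $\Lambda$.  Smoothness of $M$ only gives you infinitesimal (pro-$\mf{m}_\Lambda$-adic) lifts of the morphism; passing to an algebraic morphism $C^o_B\to M$ over the actual ring (not just its completion or Henselization) is an algebraization step that your proposal does not address, and it is not free since $\Lambda$ is not assumed Henselian in this lemma.  The paper sidesteps this entirely by lifting the \emph{graph} of $z_{B_0}$ as a closed subscheme of $C'\times_\Lambda M$, realized as an irreducible component of a complete intersection of ample divisors, and then recovering $z_B$ afterward as the projection to $M$; no infinitesimal lifting or algebraization of morphisms is needed.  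Relatedly, you assert without proof that the lifted family $C^o_B\to B$ has geometrically integral fibers and that the generic fiber of $B\to\SP\Lambda$ is geometrically integral; the first claim requires justification (the paper devotes the Stein factorization step to precisely this, since the number of geometric connected components of fibers of a non-proper smooth morphism is not automatically constant), and the second claim is actually false in general for a non-Henselian $\Lambda$ (take $\Lambda=\mathbb{Z}_{(7)}$ and $B=\SP\bigl(\ZZ_{(7)}[\sqrt 2][1/(3-\sqrt 2)][x]\bigr)$: this is smooth over $\Lambda$ with closed fiber $\SP\FF_7[x]$ geometrically integral, but its generic fiber $\SP\QQ(\sqrt 2)[x]$ is not geometrically integral over $\QQ$).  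The definition of integral extension only demands \emph{integral}, not geometrically integral, closed and generic fibers for $B$, so this particular claim is stronger than what you need, but the geometric integrality of fibers of $\pi^o_B$ genuinely is needed and is not supplied.

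Second, and most seriously, your mechanism for enforcing condition (i) — that $z_B$ sends the generic fiber of $C^o_B$ into $M^o_\eta$ — is the torsor-of-lifts argument, and this is not a proof.  You observe that infinitesimal lifts of $z_B$ over a fixed lift of the source form a torsor under $H^0(\mathcal{C}^o, z_{A_0}^*T_{M_0})\otimes_k(\mf{m}_\Lambda/\mf{m}_\Lambda^2)$ and conclude this gives "enough freedom" to move the image of the generic point of $C^o_{B,\eta}$ off the closed subset $M_\eta\setminus M^o_\eta$.  But the image of the generic point is determined by the algebraic (not infinitesimal) lift, and there is no argument that the map from the space of algebraic lifts to $M_\eta$ (via image of the generic point) is non-constant, let alone that a generic choice lands in $M^o_\eta$.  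In the paper's construction, genericity is built in: the lifted subscheme $C$ is chosen so that its generic fiber over $\text{Frac}(\Lambda)$ is a \emph{general} complete intersection of ample divisors in $C'_\eta\times M_\eta$; since $\text{Frac}(\Lambda)$ is infinite, a general such complete intersection meets the dense open $C'_\eta\times M^o_\eta$, and the subsequent flatness argument (removing the closure of the bad locus) then upgrades this to condition (i) for the entire generic fiber.  Your final flatness-and-shrinking step is fine, but without a concrete mechanism that forces the generic point to land in $M^o_\eta$ in the first place, that step has nothing to feed on.
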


\begin{proof}
Notice first, since $F$ is algebraically closed in $E$, the algebraic
closure in $E$ of the prime subfield is actually a subfield of $F$.
Thus, since $\Lambda/\mf{m}_\Lambda$ is a finite extension of the
prime field, the subfield $\Lambda/\mf{m}_\Lambda$ of $E$ is actually
a subfield of $F$.
By limit
arguments there exists a subring $A_0\subset F$ and a
smooth morphism $\pi^o_{B_0}:C^o_{B_0}\to  \SP A_0$ such that $E$
equals $F(C^o_F)$ and such that $\Lambda/\mf{m}_\Lambda \to A_0$ is
finite type.  Up to adjoining finitely many elements of $F$ to $A_0$,
up to replacing $C^o_{B_0}$ by the base change over this larger ring $A_0$,
and up to replacing $C^o_{B_0}$ by a dense Zariski open subscheme,
there also exists a $\Lambda$-morphism $z_{B_0}:C^o_{B_0}\to M$ that
induces $z$.  Also, since $\Lambda/\mf{m}_\Lambda$ is perfect, up to
inverting one nonzero element of $A_0$, the scheme $B_0=\SP A_0$ is
smooth over $\Lambda/\mf{m}_\Lambda$.

\mni
It remains to extend from $\Lambda/\mf{m}_\Lambda$ to all of $\Lambda$ the
triple $(B_0=\SP A_0$, $\pi^o_{B_0}:C^o_{B_0}\to
B_0,z_{B_0}:C^o_{B_0}\to M)$.  This follows by the method of
\cite[Section 3]{StarrXu}.  First, since $A_0$ is a smooth,
finite type algebra over $\Lambda/\mf{m}_\Lambda$, up to a further
localization, it is the quotient of a polynomial ring over
$\Lambda/\mf{m}_\Lambda$ by an ideal generated by a regular sequence.  
Lifting the coefficients of the polynomials in this regular sequence,
there exists a $\Lambda$-smooth algebra $A'$ with $A'\otimes_\Lambda
\Lambda/\mf{m}_\Lambda$ equal to $A_0$.  If $d$ equals $0$, define
$C'\to \SP A'$ to be the identity.  For $d\geq 1$, up to localizing
$A_0$ and replacing
$C^o_{B_0}$ by a dense Zariski open, realize $C^o_{B_0}$ as a dense
open subset of a hypersurface in $\PP^{d+1}_{A_0}$.  For a general
lift to $A'$ of the coefficients of the defining polynomial of this
hypersurface, the lift of the hypersurface in $\PP^{d+1}_{A'}$ is flat
over a dense open subset of $\SP A'$ that contains $\SP A_0$, and it
is smooth over a dense open subset.  Up to localizing $A_0$ and $A'$
further, this hypersurface is $A'$-flat.  Since the geometric generic
point is a smooth hypersurface of dimension $d\geq 1$ in projective
space, it is integral.  Define $C'$ to be a dense
open subset that intersects the fiber over $\SP A_0$ and that is
smooth over $\SP A'$.  Since the geometric generic fiber is integral,
up to shrinking $C'$ further, $C'$ has geometrically integral fibers
over $\SP A'$. 

\mni
Now consider the graph of $z_{B_0}$ as a closed subscheme of the fiber
product $C'\times_{\SP \Lambda} M$.  Up to shrinking further, it is an
irreducible component (of multiplicity $1$) of a complete intersection
of ample divisors in the closed fiber of $C'\times_{\SP \Lambda} M$.
As above, lift the coefficients of the defining equations to lift
$z_{B_0}$ to a closed subscheme
curve $C$ of $C'\times_{\SP \Lambda} M$ whose generic fiber over
$\text{Frac}(\lambda)$ is a general complete intersection of ample
divisors.  Since $\text{Frac}(\Lambda)$ is an infinite field, for an
appropriate choice of the lifts of the coefficients, $C$ is smooth and
the image in $M_\eta$ intersects $M^o_\eta$.

\mni
There is an issue about irreducibility of geometric fibers.
Choosing projective models over $\Lambda$ of all of the schemes,
the Stein factorization $\ol{B}$ of $C\to \SP A'$ (roughly
the integral closure of $A'$ in the fraction field of $C$) may be
nontrivial.  However, since the closed fiber $C_0$ is smooth over
$A_0$ with geometrically integral fibers, $\SP A_0$ is an irreducible
component of the closed fiber $\ol{B}_0$.  
Replace $\ol{B}$ by the open complement in
$\ol{B}$ of the union of the finitely many irreducible components of
$\ol{B}_0$ different from $\SP A_0$.  The restriction of $C$ over
$\ol{B}$ now has geometrically irreducible fibers.  Define $C^o_B$ to be the
open subset of $C$ that is the smooth locus of the morphism to
$\ol{B}$. 
Define $B$ to be the open image in $\ol{B}$ of this smooth morphism.
Define $z_B$ to be the restriction to the locally closed subscheme
$C^o_B$ of $C\times_{\SP \Lambda} M$ of the projection to $M$.  By
construction, the inverse image of $M^0_\eta$ in the generic fiber
$C^o_\eta = C^o_B\otimes_{\SP \Lambda}\SP \text{Frac}(\Lambda)$ is a
dense open.  The complement is a proper closed subset $D_\eta$.
Since $C^o_B$ is flat over $\SP \Lambda$, the closure $D$ of $D_\eta$
is flat over $\SP \Lambda$.  Thus, $D_\eta$ cannot contain the
irreducible component $C^o_{B_0}$.  After replacing $C^o_B$ by the open
complement of $D_\eta$, 
this
triple $(B\to \SP \Lambda,\pi^o_B:C^o_B\to B,z_B:C^o_B\to M)$ is an
integral extension of $z$. 
\end{proof}

\mni
Here is a precise formulation of 
existence of rational points for specializations of
separably rationally connected varieties.

\begin{defn} \label{defn-RCfld} \marpar{defn-RCfld}
A field $L$ is \emph{RC solving}, or \emph{admits rational points on
  specializations of separably rationally connected varieties},  
if for every projective, flat
scheme $\Xx_R$ over a prime regular
DVR $R$ 
such that $\Xx_R\times_{\SP R} \SP \ol{\text{Frac}(R)}$ is
smooth, integral, and separably rationally connected, 
for every field
extension $z^*:R/\mf{m}_R \hookrightarrow L$, $\Xx_R\times_{\SP R} \SP
L$ has an
$L$-rational point.  The field $L$ is \emph{characteristic $0$ RC
  solving} if the condition holds for every prime regular DVR $R$
whose fraction field has characteristic $0$.
\end{defn}

\begin{rmk} \label{rmk-reminder} \marpar{rmk-reminder}
To summarize the lemmas above, the prime regular hypothesis is
automatic if $(R,\mf{m}_R)$ is an equicharacteristic DVR.  Also, $L$
is RC solving if and only if, for every prime finite DVR
$(\Lambda,\mf{m}_\Lambda)$, for every parameter space over $\SP
\Lambda$ such that there is a dense open subset $M^o_\eta$ of the
generic fiber over which $f_M$ is smooth with geometric fibers that
are integral and separably rationally connected, for every morphism to
the closed fiber
$\SP L \to M_0$, the pullback $\Xx_M\times_M \SP L$ has an $L$-point.
\end{rmk}

\begin{thm}\cite[Lemma 2.5]{GHMS} \label{thm-RCfib}
  \marpar{thm-RCfib}
For every algebraically closed field $k$, every function field $k(C)$ of a
geometrically integral, smooth, projective $k$-curve $C$ is RC solving.
\end{thm}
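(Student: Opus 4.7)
The plan is to combine Remark \ref{rmk-reminder} with Lemma \ref{lem-extend} to reduce the problem to a geometric configuration over a parameter space, then apply the Graber-Harris-Starr theorem on sections of families of separably rationally connected varieties over curves over algebraically closed fields.

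By Remark \ref{rmk-reminder}, it suffices to show the following: for every prime finite DVR $(\Lambda,\mf{m}_\Lambda)$, every parameter space $(M\to \SP \Lambda, f_M:\Xx_M\to M,\Ll)$ with a dense open $M^o_\eta \subset M_\eta$ over which $f_M$ is smooth with geometrically integral separably rationally connected fibers, and every $\Lambda$-morphism $z:\SP L \to M_0$ with $L = k(C)$, $z$ lifts to a morphism $\SP L \to \Xx_M$. Apply Lemma \ref{lem-extend} with $F = k$, $E = L$, and $d=1$ to obtain an integral extension $(B\to \SP\Lambda, \pi^o_B:C^o_B\to B, z_B:C^o_B\to M)$ and field homomorphisms $\psi_B:\text{Frac}(B_0)\hookrightarrow k$, $\psi_C:k(C^o_F) \xrightarrow{\sim} L$. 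By condition (i) of Definition \ref{defn-int}, the pulled-back family $\mcY := \Xx_M\times_M C^o_B \to C^o_B$ has separably rationally connected geometric fibers over all points of $C^o_{B,\eta}$.

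Let $\bar\eta$ denote the geometric generic point of $B$. Then $C^o_{B,\bar\eta}$ is a smooth, geometrically integral, quasi-projective curve over the algebraically closed field $\overline{\kappa(B_\eta)}$, and $\mcY_{\bar\eta}\to C^o_{B,\bar\eta}$ is a projective family whose geometric generic fiber is separably rationally connected. After passing to a smooth projective compactification of $C^o_{B,\bar\eta}$ and extending $\mcY_{\bar\eta}$ to a projective family via closure in an appropriate projective bundle determined by $\Ll$, the Graber-Harris-Starr theorem, together with its positive-characteristic extension by de Jong-Starr, supplies a section; restricting gives a rational section $\sigma_{\bar\eta}$ of $\mcY_{\bar\eta}\to C^o_{B,\bar\eta}$ defined over a dense open of $C^o_{B,\bar\eta}$.

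The section $\sigma_{\bar\eta}$ is defined over a finite extension $\kappa/\kappa(B_\eta)$; let $B'\to B$ be the normalization of $B$ in $\kappa$. Then $\sigma_{\bar\eta}$ spreads out to a rational section of $\mcY\times_B B' \to C^o_B\times_B B'$ defined on a dense open $U'\subset B'$. Since $k$ is algebraically closed, $\psi_B$ extends to an embedding into $k$ of the function field of some irreducible component of $B'_0$, yielding a morphism $b_0:\SP k \to B'_0$ landing in the generic point of that component, hence in $U'\cap B'_0$. Pulling back the spread-out section along $b_0$ and taking the generic point gives the desired $L$-valued point of $\Xx_M$ lifting $z$, by condition (ii) of Definition \ref{defn-int}. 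The main obstacle is the final two maneuvers: one must compactify $C^o_{B,\bar\eta}$ in a way compatible with the SRC family, and then descend and spread the geometric-generic section to a rational section over a finite cover of $B$ whose open locus of definition actually contains the specialization dictated by $\psi_B$. The Graber-Harris-Starr theorem (and its positive-characteristic form) is the essential geometric input that makes the whole argument succeed.
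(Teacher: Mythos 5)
Your overall skeleton matches the paper's: reduce via Remark \ref{rmk-reminder}, apply Lemma \ref{lem-extend} with $d=1$, apply Graber--Harris--Starr (and the de Jong--Starr positive-characteristic version) to the geometric generic fiber of $C^o_B\to B$, then specialize. But there is a genuine gap at the specialization step, and it is exactly the step that actually requires work.

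You spread the geometric-generic section to a rational section defined over a dense open $U'\subset B'$ and then assert that the point $b_0:\SP k\to B'_0$ (landing at the generic point of a component of the closed fiber $B'_0$ dominating $B_0$) lies in $U'\cap B'_0$, ``hence'' giving the desired $L$-point. This is unjustified: $U'$ is a dense open of the integral scheme $B'$, so its complement is merely a proper closed subset, which a priori could contain the entire codimension-one locus $B'_0$. Generic spreading out alone gives no control over whether $U'$ meets $B'_0$. This is precisely the point the paper handles with the Krull--Akizuki theorem plus the valuative criterion of properness: it chooses a DVR $(\OO',\mf{n}')$ with fraction field $K'$ dominating the local ring $\OO_{B,\eta_{B_0}}$, and uses properness of $f_C$ to show that the maximal domain of definition of the section over $\SP \OO'\times_B C^o_B$ meets the closed fiber, i.e., that the section does specialize to the generic point of the closed fiber. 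To repair your version, you would either invoke this valuative-criterion argument directly, or observe that the local ring of the normal scheme $B'$ at the generic point of a dominant component of $B'_0$ is a DVR and apply the valuative criterion there to extend the section across that codimension-one point; either way, a properness/DVR argument must replace the unsupported ``hence.''
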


\begin{proof}
The notation is as in the definition.  By hypothesis there 
exists a prime finite DVR
$(\Lambda,\mf{m}_\Lambda)$ and a local homomorphism
$(\Lambda,\mf{m}_\Lambda) \to (R,\mf{m}_R)$.  
By Lemma \ref{lem-ft}, there exists a parameter space $(M\to
S,f_M:\Xx_M\to M, \Ll)$ and a $\Lambda$-morphism $\zeta:\SP R\to M$ pulling
back $\Xx_M$ to $\Xx_R$.  In particular, the geometric generic fiber
$\Xx_R\otimes_R \ol{\text{Frac}(R)}$ is the base change of the
geometric generic fiber of $f_M$.  Each of the following properties of
a proper scheme over an algebraically closed field holds if and
only if it holds after base change to an arbitrary algebraically
closed field extension: smoothness, integrality, separable rational
connectedness.  Thus, these properties all hold for the geometric
generic fiber.  Thus the generic point of $M$ is contained in the 
maximal open subscheme $M^o_\eta$ of the
generic fiber $M_\eta = M\otimes_\Lambda \text{Frac}(\Lambda)$ over
which the geometric fibers of $f_M$ are smooth, integral, and
separably rationally connected. So $M^o_\eta$ is a dense open subset
of the generic fiber $M_\eta$.

\mni
Define $z:\SP k(C) \to M$ to be the composition of $\SP k(C) \to \SP
R/\mf{m}_R$ and the restriction to $\SP R/\mf{m}_R$ of $\zeta$.  
By Lemma \ref{lem-extend}, there exists an integral extension of $z$.
Denote by $f_C:\Xx_C\to C^o_B$ the pullback of $\Xx_M$ by $z_B$.  By
construction, $\pi^o_B:C^o_B\to B$ is smooth, quasi-projective of
relative dimension $1$, and the geometric generic fiber of $f_C$ is a
smooth, integral, separably rationally connected variety.  Thus, by
\cite{GHS}, \cite{dJS}, 
there exists a finite extension $K'$ of the
fraction field of $B$ and a $C^o_B$-morphism $s:\SP K'\times_B
C^o_B\to \Xx_C$.  Consider the DVR 
$(\OO,\mf{n})$ that is the stalk 
$\OO_{B,\eta_{B_0}}$ of the structure sheaf of $B$ at the generic
point of the closed fiber $B_0 = B\times_{\SP
  \Lambda}\SP(\Lambda/\mf{m}_\Lambda)$.  By the Krull-Akizuki theorem,
there exists a DVR $(\OO',\mf{n}')$ 
with fraction field $K'$ that dominates
$(\OO,\mf{n})$ (but $\OO'$ is not necessarily a finite $\OO$-module).  
Since $f_C$ is proper, by the valuative criterion of
properness, the maximal domain $V$ of definition of the rational
transformation $s:\SP \OO' \times_B C^o_B\to \Xx_C$ intersects the
closed fiber $\SP(\OO'/\mf{n}') \times_B C^o_B\to \Xx_C$.  Thus, $s$
gives a rational section of $f_C$ over
the base change of $C^o_{B_0}$ to the extension field $\OO'/\mf{n}'$
of the fraction field $\text{Frac}(B_0)$.  So the same holds after
extension 
to any bigger field, e.g., algebraic closure of $\OO'/\mf{n}'$.  

\mni
Existence of
a section of a morphism of finite type schemes over an algebraically
closed field is a property that holds if and only it holds after an
arbitrary extension to an algebraically closed field.  Thus, since it
holds after extension from $\text{Frac}(B_0)$ to the algebraic closure
of $\OO'/\mf{n}'$, it also holds after extension from
$\text{Frac}(B_0)$ to $k$.  Therefore there is a $k(C)$-point of
$\Xx_R\otimes_R k(C)$.  
\end{proof} 

\begin{thm}\cite{Esnaultpadic}, \cite{EsnaultXu}
\label{thm-Esnault} \marpar{thm-Esnault}
Every finite field is RC solving.
\end{thm}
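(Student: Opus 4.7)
\mni
The plan is to reduce the assertion to Esnault's congruence-formula theorem on rational points of rationally chain connected varieties over finite fields, together with its extension by Esnault--Xu to the possibly singular specialization that arises here.

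\mni
First I would invoke Lemma~\ref{lem-ft} to obtain a prime finite DVR $(\Lambda,\mf{m}_\Lambda)$, a parameter space $(M\to \SP\Lambda,\, f_M:\Xx_M\to M,\, \Ll)$, and a $\Lambda$-morphism $\zeta:\SP R\to M$ pulling $(\Xx_M,\Ll)$ back to $(\Xx_R,\Ll_{\text{Frac}(R)})$. Because smoothness, integrality, and separable rational connectedness of the geometric generic fiber of $\Xx_R/R$ are inherited from those of the geometric generic fiber of $f_M$ above the image of the generic point of $\SP R$, there is a dense open $M^o_\eta\subset M_\eta$ over which $f_M$ is smooth with geometrically integral, separably rationally connected fibers. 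Composing $z^*:R/\mf{m}_R\hookrightarrow L$ with the restriction of $\zeta$ to the closed fiber of $\SP R$ produces a morphism $z:\SP L\to M_0$, and the scheme of interest $\Xx_R\times_{\SP R}\SP L$ is identified with the pullback $\Xx_L:=\Xx_M\times_M\SP L$ (cf.~Remark~\ref{rmk-reminder}).

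\mni
Next I would observe that $\Xx_L$ is a projective $L$-scheme obtained by specialization, inside the flat projective family $f_M$, of a smooth, geometrically separably rationally connected variety. Standard specialization of rational chain connectedness---spreading out a universal family of connecting chains of $\PP^1$'s over $M^o_\eta$ and degenerating, in the spirit of the argument already used to prove Theorem~\ref{thm-RCfib}---ensures that the geometric fiber $\Xx_L\otimes_L\ol{L}$ is rationally chain connected, after passing if necessary to an appropriate irreducible component. Esnault's theorem \cite{Esnaultpadic} in the smooth projective case, together with its extension by Esnault--Xu in \cite{EsnaultXu} to proper, not-necessarily-smooth geometrically rationally chain connected schemes over a finite field, then yields directly that $\Xx_L(L)\neq\emptyset$; this is the required $L$-point.

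\mni
The main obstacle is precisely that $\Xx_L$ need not be smooth: our hypothesis provides smoothness only of the geometric generic fiber of $\Xx_R/R$, and a flat projective specialization of a smooth SRC family can acquire singular, non-reduced, and reducible fibers. Esnault's original congruence $|X(\mathbb{F}_q)|\equiv 1\pmod{q}$---proved via Deligne's integrality theorem and the Lefschetz trace formula---requires smoothness and does not apply directly to $\Xx_L$. The essential technical input is therefore the Esnault--Xu device: one passes to a regular proper model over $\Lambda$, uses alterations to control coherent cohomology, and deduces a Frobenius-trace congruence that is insensitive to the choice of regular model. Granted that input, the argument concludes as above.
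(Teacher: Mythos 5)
Your high-level plan --- invoke the congruence theorems of Esnault and Esnault--Xu --- is indeed what the paper does, and you correctly identify that the possible non-smoothness of $\Xx_L$ is where the difficulty lies. However, you misread the dichotomy in the references, and this leads you to invoke a statement that the references do not actually prove. The paper's proof splits on the \emph{characteristic of the DVR} $R$: when $R$ has mixed characteristic the result is \cite{Esnaultpadic}, and when $R$ is equicharacteristic it is \cite{EsnaultXu}. Both theorems concern a regular, proper, flat model over a Henselian DVR with finite residue field $\mathbb{F}_q$ whose geometric \emph{generic} fiber is smooth with trivial $CH_0$ (for instance SRC), and they conclude $|\Xx_0(\mathbb{F}_q)|\equiv 1\pmod q$ by comparing the special and generic fibers via Deligne's integrality theorem (respectively its rigid/$p$-adic analogue). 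Your proposal instead attributes to \cite{Esnaultpadic} ``the smooth projective case'' over $\mathbb{F}_q$ and to \cite{EsnaultXu} an extension to arbitrary ``proper, not-necessarily-smooth geometrically rationally chain connected schemes over a finite field.'' That is not what Esnault--Xu prove; the congruence for a potentially singular $\Xx_L$ is established only using the data of a model $\Xx_R/R$ with smooth RCC generic fiber, not from rational chain connectedness of $\Xx_L\otimes_L\ol L$ alone. So the step where you discard the family and apply a point-existence theorem directly to the specialization is a genuine gap.

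Two smaller remarks. First, the detour through Lemma~\ref{lem-ft} is unnecessary: by Remark~\ref{rmk-Esnault}, when $L$ is finite every DVR $R$ appearing in Definition~\ref{defn-RCfld} is automatically prime finite, so one applies the cited theorems directly to $\Xx_R/R$ (after a Henselization and, if needed, a finite unramified base change to make the residue field equal to $L$). Second, your closing paragraph gestures at the correct technical picture (regular models, alterations, Frobenius-trace congruences independent of the model), so with the references realigned to the mixed/equicharacteristic split the argument would match the paper's one-line citation.
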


\begin{rmk} \label{rmk-Esnault} \marpar{rmk-Esnault}
Every subfield of a finite field is a finite field.  So for a finite field
$L$, for every DVR $(R,\mf{m}_R)$ and field homomorphism
$R/\mf{m}_R\to L$, already $(R,\mf{m}_R)$ is prime finite.  So
$(R,\mf{m}_R)$ is prime regular.
\end{rmk}

\begin{proof}
When $R$ has mixed characteristic, this follows from \cite{Esnaultpadic}.
When $R$ is equicharacteristic, this follows from \cite{EsnaultXu}.
\end{proof}

\begin{thm}\cite{HogadiXu} \label{thm-HX} \marpar{thm-HX}
Every PAC field of characteristic $0$ is RC solving.
\end{thm}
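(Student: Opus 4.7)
The plan is to adapt the proof of Theorem \ref{thm-RCfib}, replacing the final appeal to descent of section existence along algebraically closed extensions with a descent step that invokes the PAC hypothesis on $L$.

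First, I would apply Lemma \ref{lem-ft} to spread $\Xx_R$ to a parameter space $(M\to\SP\Lambda,f_M:\Xx_M\to M,\Ll)$ over a prime finite DVR $(\Lambda,\mf{m}_\Lambda)$. Since $\Lambda/\mf{m}_\Lambda$ embeds into $L$ and $L$ has characteristic zero, $\Lambda$ is equicharacteristic zero and $\Lambda/\mf{m}_\Lambda$ is a number field. The task is then to lift $z:\SP L\to\SP R/\mf{m}_R\to M_0$ to a morphism $\SP L\to\Xx_M$. Using Bertini on the smooth $L$-scheme $M_0\otimes_{\Lambda/\mf{m}_\Lambda}L$ (cutting, with respect to a projective embedding, by a generic complete intersection of hyperplanes through the $L$-point $z_L$ induced by $z$), produce a smooth, geometrically integral $L$-curve $\tilde{C}\subset M_0\otimes L$ containing $z_L$. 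Its generic point gives $\SP L(\tilde{C})\to M_0$, and Lemma \ref{lem-extend}, applied with $F=L$, $E=L(\tilde{C})$, $d=1$, produces an integral extension $(B\to\SP\Lambda,\pi_B^o:C_B^o\to B,z_B:C_B^o\to M)$ with embeddings $\psi_B:\text{Frac}(B_0)\hookrightarrow L$ and $\psi_C:L(C_F^o)\cong L(\tilde{C})$. The pullback $\Xx:=\Xx_M\times_M C_B^o\to C_B^o$ has smooth, integral, separably rationally connected geometric fibers over the generic fiber of $C_B^o\to\SP\Lambda$.

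Second, following the proof of Theorem \ref{thm-RCfib} essentially verbatim: restrict to the generic point of $B$ to obtain a family over a smooth geometrically integral quasi-projective curve over the characteristic zero field $\text{Frac}(B)$ with smooth SRC geometric generic fiber; apply \cite{GHS} and \cite{dJS} to obtain a section over a finite extension $K'/\text{Frac}(B)$; use Krull--Akizuki to produce a DVR $(\OO',\mf{n}')$ with $\text{Frac}(\OO')=K'$ dominating $\OO_{B,\eta_{B_0}}$; and apply the valuative criterion of properness to extend to a rational section of $\Xx\times_B\SP\OO'\to C_B^o\times_B\SP\OO'$, whose restriction to the closed fiber is a rational section defined over the extension $\OO'/\mf{n}'$ of $\text{Frac}(B_0)$.

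The final step is the PAC descent. Consider the relative scheme of sections $\Sec\to B$ of $\Xx\to C_B^o$, and let $V\subset\Sec$ be the irreducible component containing the $K'$-point just constructed. The key claim is that the fiber $V_0:=V\times_B\SP\text{Frac}(B_0)$ is geometrically integral over $\text{Frac}(B_0)$; this should follow from the GHS--dJS construction of very free sections of sufficiently large anti-canonical degree, together with the fact that the resulting ``free component'' of the relative Hom scheme has an intrinsic (hence Galois-invariant) characterization as the unique component containing the very free sections of a given class. Granting this, the base change $V_0\otimes_{\text{Frac}(B_0)}L$ along $\psi_B$ is a geometrically integral $L$-variety, so by PAC it carries an $L$-point, which is an honest section $s_L:C_F^o\to\Xx_M\times_M C_F^o$. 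Evaluating $s_L$ at the $L$-point of $C_F^o$ corresponding via $\psi_C$ to $z_L\in\tilde{C}(L)$, and projecting to $\Xx_M$, yields the desired $L$-point of $\Xx_M\times_M\SP L=\Xx_R\times_R\SP L$. The hard part will be establishing the geometric integrity of $V_0$: this is precisely where SRC plus characteristic zero must be converted into a moduli-theoretic uniqueness statement for the free component, since without it the $\OO'/\mf{n}'$-valued section produced above cannot be descended to an $L$-point by the PAC hypothesis alone.
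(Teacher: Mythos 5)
The paper gives no internal proof of Theorem \ref{thm-HX}; it is cited from \cite{HogadiXu}. So I compare your sketch against that source and against the paper's positive-characteristic analogue, the proof of Theorem \ref{thm-StarrAx}.

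There is a genuine gap, which you flag but do not close: the geometric integrity of $V_0 = V\times_B \SP\text{Frac}(B_0)$. The route you propose to it---that the ``free component'' of the relative Hom scheme ``has an intrinsic (hence Galois-invariant) characterization as the unique component containing the very free sections of a given class''---is not a consequence of the references you invoke. GHS and dJS establish \emph{existence} of free sections over algebraically closed ground fields; they assert no uniqueness of a component. For a general separably rationally connected fibration over a curve, the space of sections of a given degree (or fixed numerical class) need not be irreducible; uniqueness statements of this kind are precisely the content of \emph{rational simple connectedness} (cf.\ \cite[Theorem 13.1]{dJHS} and the RSC property used elsewhere in the paper), which is strictly stronger than rational connectedness and is not a hypothesis in the definition of ``RC solving.'' Even if the component over the generic point of $B$ were geometrically integral, its special fiber $V_0$ need not be: a geometrically integral family over a DVR can perfectly well degenerate to a geometrically reducible scheme, and this is the entire difficulty.

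The Hogadi-Xu argument is organized quite differently: they resolve singularities of the total space over the DVR (a characteristic-zero tool with no analogue available here) and, via a birational analysis of the maximal rationally connected fibration, show that the special fiber of a degeneration of a rationally connected variety contains a \emph{canonical} geometrically irreducible rationally connected subvariety. PAC is applied directly to that subvariety, with no section scheme in sight. Your outline is structurally closer to the paper's proof of Theorem \ref{thm-StarrAx}, but that proof discharges the Galois obstruction $k'/k$ coming from Corollary \ref{cor-ffBertini} using the ``nice'' roots-of-unity hypothesis on $L$, which suffices because finite extensions of finite fields are cyclotomic. Over a number-field residue field in characteristic zero there is no analogous uniform fix, which is exactly why Theorem \ref{thm-HX} requires the genuinely different Hogadi-Xu argument rather than a port of the positive-characteristic proof.
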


\begin{thm}\cite[Theorem 1.1]{SPAC} \label{thm-StarrAx}
  \marpar{thm-StarrAx}
A PAC field of characteristic $p$ is RC solving if it contains a
primitive root of unity of order $n$ for every integer $n$ prime to
the characteristic.
\end{thm}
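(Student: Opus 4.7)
The plan is to apply the parameter-space formulation of Remark \ref{rmk-reminder}, reduce via Lemma \ref{lem-extend} to a family of separably rationally connected varieties over a smooth curve, apply the de Jong--Starr sections theorem, and then use the roots-of-unity hypothesis together with the PAC property to descend the resulting section to an $L$-point.

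By Remark \ref{rmk-reminder}, it suffices to show: given a prime finite DVR $(\Lambda,\mf{m}_\Lambda)$ (whose residue field is necessarily a finite field of characteristic $p$), a parameter space $(M\to\SP\Lambda,f_M:\Xx_M\to M,\Ll)$ with $f_M$ smooth and separably rationally connected on a dense open $M^o_\eta\subset M_\eta$, and a morphism $z:\SP L\to M_0$, the pullback $\Xx_M\times_M\SP L$ has an $L$-rational point. Choose a finitely generated subfield $E=F(V)\subset L$ through which $z$ factors, where $F$ is the algebraic closure of $\mathbb{F}_p$ in $L$ and $V$ is a geometrically integral $F$-variety of dimension $d\geq 1$ (enlarge $E$ by a transcendental if needed). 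Apply Lemma \ref{lem-extend} to produce an integral extension $(B\to\SP\Lambda,\pi^o_B:C^o_B\to B,z_B:C^o_B\to M)$; condition (i) of Definition \ref{defn-int} ensures that the pullback family $z_B^*\Xx_M\to C^o_B$ has separably rationally connected geometric generic fiber over $C^o_B\times_{\SP\Lambda}\SP\text{Frac}(\Lambda)$.

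Next, by applying the Bertini Theorem \ref{thm-ffBertini} to cut $C^o_B$ down to a family of smooth curves $C'\to B'\to B$ over a suitable finite extension $B'\to B$, the restricted family $\Xx_{C'}\to C'$ is a family of separably rationally connected varieties over a smooth curve over $\text{Frac}(B')$. The de Jong--Starr sections theorem \cite{dJS} then produces a section of $\Xx_{C'}\to C'$ after a further finite, generically \'{e}tale extension; the moduli space $\Sigma$ of all such sections, regarded as a $B$-scheme, will be analyzed via its Stein factorization $\wt\Sigma\to B$. Following the techniques of \cite[Section 3]{StarrXu} and choosing $C'$ of sufficiently large and divisible degree, one arranges that the distinguished component of $\wt\Sigma\to B$ is an abelian Kummer cover of exponent prime to $p$.

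Finally, invoke the PAC property. After pullback along $\text{Frac}(B_0)\hookrightarrow L$, this Kummer cover splits since $L$ contains every root of unity of order prime to $p$, so the corresponding component of $\Sigma\times_B\SP L$ is a geometrically integral $L$-variety. The PAC property produces an $L$-point of this component, which is an $L$-section of $\Xx_{C'}\to C'$ over $\SP L$; evaluating at a chosen lift of $z$ to $C'$ gives the desired $L$-point of $\Xx_M\times_M\SP L$. The main obstacle is controlling the Stein factorization $\wt\Sigma\to B$ in positive characteristic, where wild (Artin--Schreier) ramification could in principle obstruct the Kummer reduction. The strategy is to exclude the wild part by working with sections of sufficiently large degree so that the moduli space is geometrically rationally simply connected, and to use Corollary \ref{cor-ffBertini} to rule out inseparable PAC sections after generic hyperplane sectioning; the roots-of-unity hypothesis is then precisely what is needed to kill the remaining tame Kummer part when extracting the $L$-point.
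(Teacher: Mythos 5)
Your proposal follows the structure of the \emph{original} proof in \cite{SPAC}, which the paper explicitly replaces. The paper's remark preceding this theorem states: ``The proof in \cite{SPAC} depends on a symmetry of `Bertini theorems'. Here is a proof that instead uses the finite field Bertini theorem.'' So you have reconstructed the old argument, not the new one. The paper's proof is by contradiction and avoids analyzing the moduli space of sections entirely. It first applies Lemma~\ref{lem-ft} and Lemma~\ref{lem-extend} (with $d=0$), arranges an \'etale map $f\colon B\to\AAA^m_\Lambda$, and makes a finite \'etale base change $\Lambda\to\Lambda'$ (adjoining a root of unity) to trivialize the constant-field extension $k'/k$ appearing in Corollary~\ref{cor-ffBertini}. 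Then: if $\Xx_{B_0}\to B_0$ has a PAC section, the PAC property of $L$ immediately produces an $L$-point; if not, Corollary~\ref{cor-ffBertini} (with $k'=k$ forced) says a general linear curve section $f^{-1}(L)$ has no PAC section, but Theorem~\ref{thm-RCfib} produces a rational section over that curve after extension to an algebraically closed field, contradiction. No moduli space of sections, no Stein factorization.

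There is a genuine gap in your version. You assert that the distinguished component of the Stein factorization $\wt\Sigma\to B$ ``is an abelian Kummer cover of exponent prime to $p$,'' and that this is what the roots-of-unity hypothesis kills. That claim is not justified and is not true in general: the Stein factorization of the space of sections of a rationally connected fibration is an arbitrary finite cover of the base, not a priori abelian or Kummer. What \emph{is} automatic over a finite residue field $\kappa$ --- and what the paper actually uses --- is the much weaker fact that the algebraic closure of $\kappa$ inside the function field of any irreducible component is a finite extension of $\kappa$, hence obtained by adjoining roots of unity. Adjoining the relevant root of unity to $\Lambda$ makes the constant-field extension trivial (so $k'=k$ in Corollary~\ref{cor-ffBertini}), but it says nothing about the rest of the cover being Kummer. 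The paper sidesteps this by working with PAC sections, which only require one component with geometrically irreducible fibers over $B_0$ itself, rather than triviality of the cover $\wt\Sigma\to B$. Your appeal to ``geometric rational simple connectedness'' of the moduli space is also off target: the hypothesis of Theorem~\ref{thm-StarrAx} is that the fibers are \emph{separably rationally connected}, not rationally simply connected, so the dJHS theorem on irreducibility of section spaces is not available in the form you need. Unless you can supply the Kummer-structure claim (which, again, is essentially what the original ``symmetry of Bertini theorems'' argument in \cite{SPAC} is about), your proof does not go through; the paper's contrapositive route via Corollary~\ref{cor-ffBertini} and Theorem~\ref{thm-RCfib} is both correct and considerably shorter.
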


\begin{rmk}
Please note: in \cite{SPAC} the hypothesis that the DVR is
prime regular is missing.  This is a mistake.  The proof is only valid
under the hypothesis that the DVR is prime regular: for the
``bifurcation'' in Step 3 in Section 2 and the use of 
\cite[Lemma 2.5]{GHMS} in the argument preceding Lemma 1.12, the
argument holds 
only if the local homomorphism $S_{\mf{m}_S}\to \OO_{P,Q}$ is
weakly unramified.
For Corollary
1.2, and similar applications, 
this is irrelevant: there is a parameter space over $\SP
\ZZ$ for Fano complete intersections.  
\end{rmk}

\begin{proof}
The proof in \cite{SPAC} depends on a symmetry of
``Bertini theorems''.  Here is a proof that instead uses the finite
field Bertini theorem.  

\mni
By Lemma \ref{lem-ft} and Lemma \ref{lem-extend} (with $d=0$) 
assume we have the
following: a DVR $(\Lambda,\mf{m}_\Lambda)$ whose
residue field is a finite field, a quasi-projective, 
smooth $\Lambda$-scheme $B$ of relative dimension $m$, a projective,
flat morphism $g_B:\Xx_B\to B$, and a field homomorphism
$\text{Frac}(B_0)\hookrightarrow L$ from the fraction field of the
(integral) closed fiber $B_0=B\times_{\SP \Lambda} \SP
(\Lambda/\mf{m}_\Lambda)$ to a perfect PAC field $L$ that is nice.
Replace $\Lambda$ by its Henselization, and base change $B$ and
$\Xx_B$ over the Henselization.  Then there is a unique connected
component of $B$ that contains $B_0$.  Up to replacing $B$ by this
open and closed subset, assume that $B\to \SP \Lambda$ has irreducible
geometric generic fiber.

\mni
Since $B$ is smooth over $\SP \Lambda$, up to replacing $B$ by an open
subset that is dense in all fibers, there exists an \'{e}tale
morphism $f:B\to \AAA^m_\Lambda$.  Denote $X_B\times_B B_0$ by
$X_{B_0}$.  
By the proof of Corollary
\ref{cor-ffBertini}, there exists a finite type, surjective
monomorphism $i:X'_0\to X_0$ such that every connected component of
$X_0'$ is regular and separated.  Up to shrinking $B$ and $B_0$,
assume that the composition $g_0\circ i:X_0'\to B_0$ is flat.  For each
of the finitely many connected components, the algebraic closure of
the field $\Lambda/\mf{m}_\Lambda$ in the function field of the
component is a finite extension obtained by adjoining a root of unity
whose order $n$ is prime to $p$.  Thus the compositum of these fields
is obtained by adjoining a root of unity $\alpha$.  

\mni
Denote by
$m_\alpha(t)$ the minimal polynomial of $\alpha$ over
$\Lambda/\mf{m}_\Lambda$.  This is an irreducible, separable, monic
polynomial.  Let $m(t)\in \Lambda[t]$ be any element reducing to
$m_\alpha(t)$.  Then $\Lambda'=\Lambda[t]/\langle m(t) \rangle$ is an
\'{e}tale extension of $\Lambda$.  Since the quotient
$\Lambda'/\mf{m}_\Lambda \Lambda'$ is a field, the ideal
$\mf{m}_{\Lambda'} = \mf{m}_\Lambda \Lambda'$ is a maximal ideal.  
Since $\mf{m}_\Lambda$ is principal, also $\mf{m}_{\Lambda'}$ is
principal.  Finally, by hypothesis the field $L$ contains all roots of
unity, so $\Lambda/\mf{m}_\Lambda \to L$ factors through
$\Lambda'/\mf{m}_{\Lambda'}$.  Thus, 
up to replacing $\Lambda$ by $\Lambda'$ and replacing
every scheme as above by its base change by the finite and regular local
homomorphism $\Lambda\to \Lambda'$, assume that
$\Lambda/\mf{m}_\Lambda$ is algebraically closed in the fraction field
of every irreducible component of $\Xx_0'$.  Thus the field extension in
Corollary \text{cor-ffBertini} is an isomorphism.  So every field
extension of $\Lambda/\mf{m}_\Lambda$ satisfies the hypothesis of the
corollary.  

\mni
Denote by $k$ the residue field $\Lambda/\mf{m}_\Lambda$.  
If $X'_0\to B_0$ has a PAC section $Y_0\subset X_0'$, then the base
change $Y_0\times_{B_0} \SP L$ is a geometrically irreducible, finite
type $L$-scheme.  Since $L$ is a perfect PAC field, this scheme has an
$L$-point, which then maps under $i$ to an $L$-point of
$X_0\times_{B_0}\SP L$, as desired.  

\mni
Thus, by way of contradiction, assume
that there is no PAC section of $g$.  Then by Corollary \ref{cor-ffBertini}
with $k'$ equal to $k$, there exists a dense, Zariski open subset $U\subset
\text{Grass}_k(\PP^1,\PP^m)$ such that for every field extension
$K/k$ and for every $[L]\in U(K)$, the inverse image $f^{-1}(L)$ is
geometrically irreducible, and the restriction of $g$ over $f^{-1}(L)$
has no PAC section.  By hypothesis, the family $\Xx_B\to B\to
\SP\Lambda$ is a parameter space such that the geometric generic fiber
of $g_B$ is smooth, integral, and separably rationally connected.
Thus, by Theorem \ref{thm-RCfib}, up to replacing $K$ by a finite 
extension $K'/K$ obtained by adjoining a root of unity, there is a
rational section of the restriction of $g$ over $f^{-1}(L)$.  This
contradiction implies that $g$ does have a PAC section.  Therefore
$X_0\times_{B_0} \SP L$ has an $L$-point.
\end{proof}

\section{Specializations of Rationally 
Simply Connected Varieties} \label{sec-RSCspec}  
\marpar{sec-RSCspec}

\mni
The first new result is an analogous result for perfect PAC fields of positive
characteristic that do not necessarily contain all roots of unity, but
where rational connectedness is replaced by \emph{rational simple
  connectedness}.  For an algebraically closed field $\ol{Q}$ 
of characteristic $0$, a \emph{rationally simply connected fibration}
over a $\ol{Q}$-curve is a pair
$(f_{\ol{Q}}:X_{\ol{Q}}\to \Cc_{\ol{Q}},\Ll_{\ol{Q}})$ where
$\Cc_{\ol{Q}}$ is a smooth, projective, connected $\ol{Q}$-curve, where
$f_{\ol{Q}}$ is a proper, flat morphism, and where $\Ll_{\ol{Q}}$ is
an invertible sheaf that satisfies the following six hypotheses.
First, $X_{\ol{Q}}$ is smooth over $\ol{Q}$.  Second, every geometric
fiber of $f_{\ol{Q}}$ is irreducible.  Third, $\Ll_{\ol{Q}}$ is
$f_{\ol{Q}}$-ample.  The final three hypotheses involve the geometric
generic fiber $Y$ of $f_{\ol{Q}}$ is a scheme over the algebraically
closure $k$ of the fraction field of $\Cc_{\ol{Q}}$, together with the
pullback $\Ll_Y$ of $\Ll_{\ol{Q}}$ as an invertible sheaf on $Y$.  The
fourth hypothesis is that for the parameter space $\Kgnb{0,1}(Y/k,1)$
of $1$-pointed, genus $0$ stable maps to $Y$ having $\Ll_Y$-degree
$1$, i.e., ``lines'' $\ell$, for the maximal open subscheme
$Y_{\text{free}}$ 
of $Y$
over which the evaluation morphism
$$
\text{ev}_1:\Kgnb{0,1}(Y/k,1)\to Y, \ \ ([\ell],p)\mapsto p,
$$
is smooth (automatically this is a dense open in $Y$), the 
fiber of $\text{ev}_1$ over every geometric point of $Y_{\text{free}}$ 
is nonempty,
irreducible, and has rationally connected fibers.  There is a second
important morphism, the ``forgetful'' morphism,
$$
\Phi:\Kgnb{0,1}(Y/k,1)\to \Kgnb{0,0}(Y/k,1), \ \ ([\ell],p)\mapsto [\ell].
$$
For every integer
$m\geq 1$ there is a
parameter $k$-scheme $\text{FreeChains}_2(Y/k,m)$ of ordered
$m$-tuples 
$$
(([\ell_1],p_{1,0},p_{1,\infty}),\dots,([\ell]_m,p_{m,0},p_{m,\infty}))
$$
of triples $([\ell_i],p_{i,0},p_{i,\infty})$ of $2$-pointed lines in
$Y$ such that $p_{i,0}$ and $p_{i,\infty}$ are in the dense open
$Y_{\text{free}}$, and such that $p_{i,\infty}$ equals $p_{i+1,0}$ as
points of $Y_{\text{free}}$ for every $i=1,\dots,m-1$.  There is an
evaluation morphism
$$
\text{ev}_2:\text{FreeChains}_2(Y/k,m)\to Y\times_{\SP(k)} Y,
$$
that sends each ordered $m$-tuple as above to the ordered pair
$(p_{1,0},p_{m,\infty})$.  The fifth hypothesis is that there exists
an integer $m\geq 1$ and a dense open $V$ of $Y\times_{\SP(k)}Y$ such
that the fiber of $\text{ev}_2$ over every geometric point of $V$ is
nonempty, irreducible, and ``birationally rationally connected'',
i.e., there exists one projective model of this quasi-projective
variety 
that is rationally connected (hence every projective model is
rationally connected).  If this holds for one $m$, then there exists
an integer $m_0$ such that it holds for every $m\geq m_0$.  The final
hypothesis is that $(Y,\mc{L}_Y)$ contains a very twisting scroll,
cf. \cite[Definition 12.7]{dJHS}.  This hypothesis is equivalent
to existence of a morphism $\zeta:\PP^1_k\to \Kgnb{0,1}(Y/k,1)$ such
that all of the following hold,
\begin{enumerate}
\item[(i)]  the composition $\text{ev}_1\circ \zeta:\PP^1_k\to Y$ is
  free, i.e., $(\text{ev}_1\circ \zeta)^*T_{Y/k}$ is globally
  generated,
\item[(ii)] the morphism $\text{ev}_1$ is smooth at every point in
  $\zeta(\PP^1_k)$,
\item[(iii)] the pullback by $\zeta$ of the relative tangent sheaf
  $T_{\text{ev}_1}$ is ample, and
\item[(iv)] the pullback by $\zeta$ of $T_{\Phi}$ is globally generated.
\end{enumerate}
For a characteristic $0$ field $Q$, a pair $(f_Q:X_Q\to C_Q,\mc{L}_Q)$
is a \emph{rationally simply connected fibration} over a $Q$-curve if
the base change of the pair to the algebraic closure $\ol{Q}$ is a
rationally simply connected fibration over a $\ol{Q}$-curve as above.
The main theorem of \cite{dJHS}, Theorem 13.1 (cf. also
\cite[Definition 4.8 and Theorem 4.9]{SStrsbg}), gives
an integer $\epsilon$ and a sequence
$(Z_{Q,e})_{e\geq \epsilon}$ of irreducible components $Z_{Q,e}$ of
the Hilbert scheme $\text{Hilb}^{et+1-g(C_Q)}_{X_Q/Q}$ satisfying all of the
following.
\begin{enumerate}
\item[(i)]
The geometric generic point of $Z_{Q,e}$ parameterizes the closed
image of a section $\sigma:\Cc_Q\to X_Q$ of $f_K$ of $\Ll_Q$-degree
$e$ and that is $(g)$-free,
i.e., the deformations of the section relative to a fixed divisor of
degree $\max(2g(\Cc_Q),1)$ are unobstructed.
\item[(ii)]
The restriction to $Z_{Q,e}$ of the Abel map of $\Ll_Q$,
$\alpha_{\Ll_Q}|_Z:Z_{Q,e} \to \Pic{e}{\Cc_Q/Q}$, has fiber over the
geometric generic point of $\Pic{e}{\Cc_Q/Q}$ that is nonempty,
irreducible, and rationally connected.
\item[(iii)]
After arbitrary base change from $Q$ to an algebraically closed field,
for every section $\sigma$ as above that is $(g)$-free, after
attaching to the closed image $\sigma(\Cc_Q)$ sufficiently many
general lines in general fibers of $f_Q$, the resulting curve is
parameterized by $Z_{Q,e}$ for the appropriate $\Ll_Q$-degree $e$.
\end{enumerate}

\mni
In order to state the result, it is useful to specify a bounded family
of polarized schemes.  Let $S$ be an integral, Noetherian, regular
scheme of dimension $\leq 1$ whose function field $Q$ has
characteristic $0$; usually $S$ will be a dense Zariski open subset of
the ring of integers $\SP \mf{o}_Q$ of a number field $Q$.  Fix
integers $m,c\geq 1$.

\begin{defn}\cite[Definition 1.9]{StarrXu} \label{defn-pd} \marpar{defn-pd}
A
\emph{parameter datum} over $S$ with a codimension $>c$
compactification 
is a datum 
$$
((M,f_M:\Xx_M\to M,\Ll_{M_Q}),(\ol{M},\OO_{\ol{M}}(1),i))
$$
of a smooth $S$-scheme
$M$ 
of constant relative dimension $m>1$, a
proper, flat morphism $f_M$, an $f_M$-very ample invertible sheaf
$\Ll_{M_Q}$ on $X\times_S \text{Spec}(Q)$, and a codimension $>c$
compactification $(\ol{M},\OO_{\ol{M}}(1),i)$ of $M$ over $S$, i.e.,
$q:\ol{M}\to S$ is flat and proper, $\OO_{\ol{M}}(1)$ is $q$-very
ample, $i:M\to \ol{M}$ is a dense open immersion such that $\partial
\ol{M}:=\ol{M}\setminus M$ intersects every generic fiber of $q$ in a
subscheme of codimension $>c$ in that fiber.
\end{defn}

\mni
For every integer $r\geq 1$, the complete linear system
$H^0(\ol{M}_Q,\OO_{\ol{M}_Q}(r))$ induces a closed immersion into
projective space over $\SP(Q)$.  For the corresponding Grassmannian
parameterizing flat families of linear subvarieties of this projective
space of codimension $m-1$, there is a dense open subset $G_r$
parameterizing linear subvarieties whose inverse images in $\ol{M}_Q$
are smooth, irreducible curves that are complete contained in the open
subset $M_Q$. 

\begin{defn}\cite[Definition 1.10]{StarrXu} \label{defn-RSCprop}
  \marpar{defn-RSCprop} 
A parameter datum as above
satisfies
the \emph{RSC property} if there exists
an 
integer $r_0$ and a sequence $(W_r)_{r\geq r_0}$
of dense open subschemes $W_r \subset G_r$ 
such that for every algebraically closed extension $\ol{K}$ of
$Q$ and for every $\ol{K}$-point of $W_r$ parameterizing a smooth curve
$\Cc_{\ol{K}}$ in $M_{\ol{K}}$, the pullback family $f:\Cc_{\ol{K}}
\times_M\Xx\to \Cc_{\ol{K}}$ 
together with 
the pullback of $\Ll$
is a rationally simply connected fibration over $\Cc_{\ol{K}}$.
\end{defn}  

\mni
The results of \cite{StarrXu} are stated with respect to a
\emph{finite field} $F$.  However, the results hold for any RC
solving field.  When the field is the function field of a curve over
an algebraically closed field, this is already in the proofs of
\cite[Corollary 13.2 and Lemma 13.4]{dJHS}.  

\begin{prop}\cite[Theorem 1.6]{StarrXu} 
\label{prop-concordmain} \marpar{prop-concordmain}
Let $(R,\mf{m}_R)$ be a Henselian DVR that is prime regular 
with characteristic $0$
fraction field $K$, and let $R/\mf{m}_R\to L$
be a field extension.  Let $\Cc_R\to \SP R$ be a generically smooth
$R$-curve.  Let $f_R:\Xx_R\to \Cc_R$ be a projective, surjective morphism,
and let $\Ll_K$ be an $f_R$-ample invertible sheaf on the generic
fiber $X_K =X_R\times_{\SP R} \SP K$.  Let $z:\SP L\to \SP R/\mf{m}_R$
be a field extension that is RC solving.  If $(f_{\ol{K}}:\Xx_{\ol{K}}\to
\Cc_{\ol{K}},\mc{L}_{\ol{K}})$ is a rationally simply connected
fibration, then there exists a sequence $(U_e)_{e\geq \epsilon}$ of
dense open subschemes $U_e\subset \Pic{e}{\Cc_K/K}$ such that
$\Xx_L/\Cc_L$ admits rational points compatibly with $(U_e)_{e\geq
  \epsilon}$.  In particular, for every generic point $\eta$ of the
smooth locus of $\Cc_L=\Cc_R\times_{\SP R} \SP L$, the
$L(\eta)$-scheme $\Xx_R\times_{\Cc_R} \SP L(\eta)$ has an
$L(\eta)$-point.  
\end{prop}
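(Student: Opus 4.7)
My plan is to reduce the existence of sections of $\Xx_L \to \Cc_L$ (of $\Ll$-degree $e$ and parameterized by $U_e \subset \Pic{e}{\Cc_K/K}$) to an application of the RC solving hypothesis on the residue extension, applied to the rationally connected fibers of the Abel maps produced by the main theorem of \cite{dJHS}. The argument is a relative, characteristic $0$-to-residue-field spread out of Theorem 13.1 of \emph{loc.~cit.}

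First, I would spread the datum out to a parameter space over a prime finite DVR. Since $R$ is prime regular, fix a local regular homomorphism $\phi:(\Lambda,\mf{m}_\Lambda)\to (R,\mf{m}_R)$ with $\Lambda$ prime finite. Applying Lemma \ref{lem-ft} successively to $\Cc_R \to \SP R$ and to $(\Xx_R, \Ll_K)$, there exists a parameter datum
$$(M \to \SP \Lambda,\; f_M:\Xx_M \to \Cc_M \to M,\; \Ll_{M_Q})$$
and a dominant $\Lambda$-morphism $\zeta:\SP R \to M$ that pulls back this family to $(\Cc_R, \Xx_R, \Ll_K)$. After shrinking $M$ about the image of $\zeta$ I may assume the generic fiber over $Q = \text{Frac}(\Lambda)$ has smooth total space, and I may assume the RSC property of Definition \ref{defn-RSCprop} holds with respect to a sequence $(W_r)$ of dense opens in the appropriate Grassmannians; this uses the hypothesis that the geometric generic fiber of $f_R$ is itself a rationally simply connected fibration, which propagates to a dense open of the parameter space.

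Second, over the characteristic $0$ generic fiber I would invoke \cite[Theorem 13.1]{dJHS} (together with \cite[Definition 4.8, Theorem 4.9]{SStrsbg}): for every curve $\Cc$ in $M_Q$ parameterized by a point of $W_r$, for large enough $\Ll$-degree $e\geq \epsilon$ the Hilbert scheme $\Hilb^{et+1-g(\Cc)}_{\Xx/\Cc}$ possesses a distinguished irreducible component $Z_{Q,e}$ whose Abel map $\alpha_\Ll:Z_{Q,e}\to \Pic{e}{\Cc/Q}$ has geometrically irreducible, rationally connected generic fiber. I would spread this component out, together with its Abel map, to a relative construction $\mc{Z}_e \to \mc{P}_e$ over the parameter space $M$, shrinking so that the geometric generic fiber of $\mc{Z}_e \to \mc{P}_e$ is smooth, integral, and separably rationally connected (which, in characteristic $0$, is automatic).

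Third, I would specialize and invoke RC solving. The composition $\SP L \to \SP(R/\mf{m}_R) \to M_0$ allows, by Lemma \ref{lem-extend}, an integral extension $(B \to \SP \Lambda,\pi^o_B,z_B)$ carrying this point. For a generic point $\pi$ of the closed fiber of $\mc{P}_e$ pulled back to $B$, the stalk of the structure sheaf at $\pi$ is a prime regular DVR by Remark \ref{rmk-integralreg}, and the fiber $\mc{Z}_e \times_{\mc{P}_e} \pi$ is the special fiber of a flat, projective model over this DVR whose geometric generic fiber is the rationally connected Abel fiber from step two. The RC solving hypothesis on $L$ thus yields an $L(\pi)$-point, i.e., a section of $\Xx_L \to \Cc_L$ of $\Ll$-degree $e$ lying over the line bundle classified by $\pi$. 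The dense open $U_e \subset \Pic{e}{\Cc_K/K}$ is then defined to be the open locus over which such sections exist; a section gives $L(\eta)$-points of $\Xx_R \times_{\Cc_R} \SP L(\eta)$ at every generic point $\eta$ of the smooth locus.

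The main obstacle is the second and third steps: carefully constructing $\mc{Z}_e \to \mc{P}_e$ over the parameter space so that formation of the Abel map commutes with base change both to the characteristic $0$ geometric fiber (where \cite{dJHS} applies) and to the generic point of the closed fiber of the integral extension (where RC solving applies). The RSC property, together with the smoothness and flatness built into the parameter datum formalism, is precisely what makes the constructions of \cite{dJHS} deformation-theoretically unobstructed and compatible with specialization; once this bookkeeping is in hand the RC solving hypothesis at the residue field immediately produces the sought-after points.
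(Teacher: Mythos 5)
Your overall strategy — produce rationally connected Abel-map fibers via \cite[Theorem 13.1]{dJHS} and then hand them to the RC solving hypothesis — is the right idea, but there is a genuine gap in the third step and an unnecessary detour in the first.

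The detour: you begin by spreading out to a parameter space over a prime finite DVR $\Lambda$ and then use Lemma \ref{lem-extend} to build an integral extension of a morphism $\SP L\to M_0$. For this proposition that machinery (which belongs to the proof of Proposition \ref{prop-concordheight}) is not needed, precisely because the hypothesis that $R$ is \emph{prime regular} hands you directly the DVR over which RC solving will be applied. The paper works directly over $\SP R$: apply \cite[Theorem 13.1]{dJHS} to $\Xx_{\ol K}\to\Cc_{\ol K}$, descend the distinguished Hilbert components to $K$, take closures $Z_{R,e}\subset\Hilb_{X_R/R}$, resolve singularities of the generic fiber to obtain $\wt Z_{R,e}$, and define $U_e\subset\Pic{e}{\Cc_K/K}$ as the maximal open over which $\wt Z_{K,e}\to\Pic{e}{\Cc_K/K}$ is smooth and meets the dense open of sections. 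Note that your proposed definition of $U_e$ as ``the open locus over which such sections exist'' is circular: the open must be defined first from the Abel map and only then does one prove sections exist over it.

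The gap: in your third step you apply RC solving at a generic point $\pi$ of the closed fiber of $\mc P_e$ pulled back to $B$, and say this ``yields an $L(\pi)$-point.'' But the residue field $\kappa(\pi)$ of the DVR at $\pi$ is a transcendental extension of $\mathrm{Frac}(B_0)$; it does not embed in $L$, and nothing in the hypotheses makes $L(\pi)$ RC solving. The RC solving definition requires a field embedding from the residue field of the chosen DVR into the RC solving field, and your choice of DVR breaks that. What the paper does instead is use the \emph{Henselian} hypothesis on $R$ to produce, for $e$ sufficiently positive and divisible, an honest $K$-point $[\mc A]$ of $U_e$. The closure $\wt Z_{R,\mc A}$ in $\wt Z_{R,e}$ of the Abel fiber over $[\mc A]$ is then projective and flat over $\SP R$, with smooth, integral, separably rationally connected geometric generic fiber; since $R$ is prime regular and $R/\mf m_R$ embeds in $L$ by hypothesis, RC solving applies literally, giving an $L$-point of $\wt Z_{R,\mc A}\times_{\SP R}\SP L$. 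Finally \cite[Lemma 4.1]{StarrXu} converts this Hilbert-scheme $L$-point into an $L(\eta)$-point of $\Xx_R\times_{\Cc_R}\SP L(\eta)$. Choosing a concrete $K$-rational line bundle, rather than working at a generic point of the Picard scheme, is the step that makes the field bookkeeping come out right.
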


\begin{proof}
The proof in \cite{StarrXu} applies in the RC solving case.
By \cite[Theorem 13.1]{dJHS}, there exists a sequence
$(Z_{R,e})_{e\geq \epsilon}$ of
geometrically integral closed subschemes of the Hilbert scheme
$\text{Hilb}_{X_R/R}$ whose geometric generic point parameterizes the image of a
section of $X_K\to C_K$ and
such that the restriction of the natural Abel map
$\alpha_e:Z_{K,e}\to \Pic{e}{C_K/K}$ is surjective with geometric
generic fiber an integral scheme that is rationally connected.  Since
$K$ has characteristic $0$, by resolution of singularities, 
there exists a projective morphism
$\wt{Z}_{R,e}\to Z_{R,e}$ such that
$\wt{Z}_{K,e}$ is $K$-smooth. Then $U_e$ is defined as the maximal open
subscheme over which $\wt{Z}_{K,e}\to \Pic{e}{C_K/K}$ is smooth and
the fibers intersect the dense open parameterizing closed subschemes
of $X_K$ that are sections of $X_K\to C_K$.  For every $K$-point
$[\mc{A}]$
of
$U_e$ (and these do exist for $e$ sufficiently positive and divisible
using the fact that $R$ is Henselian) the closure
$\wt{Z}_{R,\mc{A}}$
in $\wt{Z}_{R,e}$ of
the fiber of $\alpha_e$ over this $K$-point is a projective, flat
scheme over $\SP R$ whose geometric generic fiber is smooth, integral
and separably rationally connected.  Thus, since $L$ is RC solving,
there exists an $L$-point of $\wt{Z}_{R,\mc{A}}\times_{\SP R} \SP L$.
Using \cite[Lemma 4.1]{StarrXu},  this $L$-point gives an
$L(\eta)$-point of $\Xx_R\times_{\Cc_R} \SP L(\eta)$. 
\end{proof}

\mni
The various complements to the main theorem from \cite{StarrXu}
also extend.  

\begin{prop} \label{prop-concord} \marpar{prop-concord}
Let $S$ be a regular, integral, Noetherian scheme of dimension $\leq
1$ whose function 
field has characteristic $0$ and whose
residue
fields at closed points are either finite fields or characteristic $0$
fields.  Fix a parameter datum over $S$ with a codimension $>1$
compactification.  
Then the following results of
\cite{StarrXu} hold
with the finite field $F$ replaced by any RC solving field $L$: 
Proposition
1.12 and Corollary 1.13.  Assuming that $S$ is a dense open of the
spectrum of the ring of integers of a number field,
Corollary 1.14 also holds.  Finally, for a parameter datum as above,
for a closed point $\mf{s}\in S$ and a regular local homomorphism of
DVRs $\OO_{S,\mf{s}} \to R$, for $\pi_R:\Cc_R\to \SP R$, for an $S$-morphism
$\zeta:\OO_{\Cc_R,\eta} \to \ol{M}$, and for a projective flat
morphism $f_\eta:\Xx_\eta \to \SP \OO_{\Cc_R,\eta}$ as in Proposition
1.17, there exists a modification of the parameter datum over $S$ such
that $\zeta$ extends to a regular morphism $\zeta'$ to $M'$ and such that the
pullback by $\zeta'$ of $\Xx'_{M'}\to M'$ equals $f_\eta$.
In
particular, if the parameter datum satisfies the RSC
property, for every field homomorphism from the residue field of a
closed point of $S$ to $L$, for every function field $E=L(\eta)$ of a
geometrically integral $L$-curve, and for every $S$-morphism $z_E:\SP
E \to M$, the pullback $X_E$ of $X_M$ by $z_E$ has an $E$-rational
point.  
\end{prop}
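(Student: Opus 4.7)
The proposition bundles several extensions of results from \cite{StarrXu}. For the first sentence, direct inspection of the proofs of Proposition 1.12 and Corollary 1.13 in \cite{StarrXu} shows that the finite-field hypothesis is used there only to invoke Theorem \ref{thm-Esnault}, namely that finite fields are RC solving. Substituting the RC solving hypothesis on $L$ in its place yields the stated generalizations verbatim. The extension to $S$ a dense open in the ring of integers of a number field (Corollary 1.14 of \cite{StarrXu}) follows the same way, with the characteristic-zero generic-fiber case handled by Theorem \ref{thm-HX}.

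For the modification extending $\zeta$: the morphism $\zeta:\SP\OO_{\Cc_R,\eta}\to \ol{M}$ extends to a rational map from $\Cc_R$ to $\ol{M}$ by properness of $\ol{M}$ over $S$. Resolving the locus of indeterminacy by a sequence of blowups of $\ol{M}$, and modifying $\Xx_{\ol{M}}$ compatibly (using flatness of $f_\eta$ to arrange that the pullback of the blown-up universal family equals $f_\eta$), gives a new parameter datum $(M', \ol{M}', \Xx'_{M'})$. The codimension-$>c$ hypothesis on $\partial\ol{M}$ together with a further shrinking of $\Cc_R$ around $\eta$ ensures that the extended morphism $\zeta'$ lands in the smooth locus $M'$, as required.

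For the final assertion, given $z_E:\SP E\to M$, apply Lemma \ref{lem-ft} and Lemma \ref{lem-extend} with $d=1$ to produce an integral extension
\[
(B\to \SP\Lambda,\ \pi^o_B:C^o_B\to B,\ z_B:C^o_B\to M)
\]
together with $\psi_B:\Fr(B_0)\hookrightarrow L$ and an isomorphism $\psi_C:L(C^o_F)\xrightarrow{\sim} E$, where $\Lambda = \OO_{S,\mf{s}}$. By Remark \ref{rmk-integralreg}, the stalk $R=\OO_{B,\eta_{B_0}}$ is a prime regular DVR with characteristic-zero fraction field $K=\Fr(R)$. By enlarging $B$ within the construction of the integral extension, and invoking Bertini on the complete linear system $|\OO_{\ol{M}_Q}(r)|$ for some $r\geq r_0$ large, arrange that the generic-fiber curve $C^o_{B,K}\subset M_K$ is a complete intersection cut out by general members of this linear system, so that it corresponds to a $K$-point of the dense open $W_r$ from Definition \ref{defn-RSCprop}. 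The RSC property then endows the pullback $\Xx_{\ol{K}}/C^o_{B,\ol{K}}$ with a rationally simply connected fibration structure. Applying the (generalized) Proposition \ref{prop-concordmain} to the Henselization of $R$, with the RC solving field $L$, yields an $L(\zeta)$-point of the family over a generic point $\zeta$ of the smooth locus of $C^o_B\times_B\SP L$. Transporting this point along the isomorphism $\psi_C$ produces the desired $E$-rational point of $X_E$. The main obstacle is engineering the integral extension so that the generic-fiber curve lies in $W_r$; all other steps are either direct transcriptions of the arguments in \cite{StarrXu} with Esnault's theorem replaced by the RC solving hypothesis, or standard resolution-of-indeterminacy constructions.
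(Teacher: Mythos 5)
Your proposal glosses over the two concrete obstacles that the paper's proof singles out, and it substitutes a genuinely different (and incomplete) mechanism for the modification step.

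First, for Corollary~1.13 you assert that the finite-field hypothesis enters ``only to invoke'' Theorem~\ref{thm-Esnault}. That is not so. The paper points out that the original proof in \cite{StarrXu} uses finiteness of $L$ to guarantee that the image under $z:\SP L\Sem{t}\to \ol{M}$ of the closed point is a \emph{closed} point of $\ol{M}$; for a general RC solving field $L$ this fails. The fix is to run the construction of the approximating curves $C_e$ over the base change $\ol{M}_L = \ol{M}\times_R \SP L$ (where the image point is automatically closed since $\ol{M}_L$ is of finite type over $L$), combined with Artin approximation and Greenberg's theorem to produce a formal and then an algebraic section. Your argument does not see this issue and therefore does not address it.

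Second, your explanation for Corollary~1.14 is off target. That corollary is an Ax--Kochen style transfer, and what must be checked is that the Transfer of Residues Lemma from \cite{DenefAxKochen} survives when the residue field $\mathbb{F}_p$ is replaced by $L$. The paper does this by exhibiting the general bijection $\tau_{\theta,\pi}:\mc{MR}(A)\to \mc{MR}(B)$ between multiplicative residue monoids of two Henselian DVRs with isomorphic residue fields. Invoking Theorem~\ref{thm-HX} for a ``characteristic-zero generic-fiber case'' has nothing to do with what needs checking there.

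Third, for the modification of the parameter datum you propose resolving indeterminacy by blowups of $\ol{M}$. The paper instead extends the proof of \cite[Proposition~1.17]{StarrXu} by appealing to N\'eron desingularization, and it is exactly here that the hypothesis that $\OO_{S,\mf{s}}\to R$ is a \emph{regular} local homomorphism is used. Your blowup construction does not engage with that hypothesis at all, and it has a gap: after blowing up $\ol{M}$ along the indeterminacy locus, there is no reason the image of the closed point of $\SP\OO_{\Cc_R,\eta}$ lands in the new smooth open subset $M'$ rather than in the strict transform of the boundary, nor that the blowup stays smooth over $S$. Néron desingularization is precisely the tool that yields a smooth model through which the DVR map factors, and your route does not reproduce it.

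Finally, the argument for the ``in particular'' clause is not wrong in spirit, but the crucial step you flag as ``the main obstacle'' --- engineering the integral extension so the generic-fiber curve lands in $W_r$ --- is not delivered; it needs the already-extended Proposition~1.12 of \cite{StarrXu} (or the mechanism of Proposition~\ref{prop-concordheight}), not an ad hoc Bertini argument on top of Lemma~\ref{lem-extend}, which by itself does not produce complete-intersection curves in $M$.
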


\begin{proof}
The proof of Proposition 1.12 is essentially Corollary 3.15, which is
valid over any field $F$.  

\mni
The proof of Corollary 1.13 appears to use
the fact that the field is finite, so that the image $z_o$
under
$z:\SP L\Sem{t}\to \ol{M}$ of the closed point is a closed point of
$\ol{M}$.  That can certainly fail if $L$ is not a finite field.
However, since the
construction of the curve $C_e$ is made with respect to the base
change $\ol{M}_L=\ol{M}\times_R\SP L$, the image point $z_0$ in
$\ol{M}_L$ is an
$L$-point, and that is a closed point since $\ol{M}_L$ is a finite
type $L$-scheme.  Let $\Xx_{\ol{M}}\to \ol{M}$ be any projective model
of $\Xx_M\to M\subset \ol{M}$.  
By \cite[Theorem 1.10]{ArtinApprox}, for every integer $e$ there
exists an integral closed curve $C_e$ in 
$\ol{M}_L\times_{\SP L} \PP^1_L$ that approximates to order $e$ the
graph of $z:\SP L\Sem{t} \to \ol{M}_L\times_{\SP L} \PP^1_L$,
considered as a formal section of $\text{pr}_2:\ol{M}_L\times_{\SP L}
\PP^1_L \to \PP^1_L$ over the closed point $0\in\PP^1_L(\SP L)$.
Since $z$ maps the generic point of $L\Sem{t}$ to $M$, also there
exists such $C_e$ that intersects the open $M_L\times_{\SP
  L}\PP^1_L$ in a dense open $C_e^o$; 
in fact this condition will be automatic for all $e\gg
0$.  
Proposition 1.12 then gives a section of the pullback $\Xx_M\times_M
C_e^o \to C_e^o$, which extends to a section of
$\Xx_{\ol{M}}\times_{\ol{M}} C_e \to C_e$ by the valuative criterion
of properness.  Then by \cite[Theorem 1]{Greenberg}, also
$\Xx_{\ol{M}}\times_{\ol{M}} \SP L\Sem{t} \to \SP L\Sem{t}$ has a
section whose generic fiber is a section of $\Xx_M\times_M \SP
L\Semr{t} \to \SP L\Semr{t}$.  

\mni
Corollary 1.14 uses \cite{DenefAxKochen}.  Although the article is
focused on completions at closed points of $\ZZ$,
resp. $\mathbb{F}_p[t]$, the proofs have no hypotheses that the rings
involved should be finite over the residue field, etc.  Using the
beautiful general arguments, the key computation is the isomorphism
$\tau_p:\mc{MR}(\ZZ_p) \to \mc{MR}(\mathbb{F}_p\Sem{t})$.  This is
completely general.  Let $(A,\theta A)$ and $(B, \pi B)$
be Henselian 
DVRs, and let $\ol{\tau}:A/\theta A\to B/\pi B$ be a field
isomorphism.  For every unit $u\in A\setminus \theta A$, there exists
a unit $v\in B \setminus \pi B$ such that $\ol{\tau}(\ol{u})$
equals $\ol{v}$.  Thus, there is a unique bijection
$\tau_{\theta,\pi}:\mc{MR}(A)\to \mc{MR}(B)$ such that for every
integer $n\geq 0$ and every unit $u$,
$\tau_{\theta,\pi}(\text{mres}(u\theta^n))$ equals
$\text{mres}(v\pi^n)$.  This bijection is all that is used in the
Transfer of Residues Lemma, and thus also in the Transfer of
Surjectivity Theorem.  In particular, this applies for $A$ equal to
$L\Sem{t}$ with $\theta$ to $t$ and to $B$ equal to a Cohen ring for $L$
with $\pi$ equal to $p$ (the Witt vectors is one explicit construction
of such a Cohen ring).

\mni
The proof of Proposition 1.17 works in the same way.  In order to
apply the N\'{e}ron desingularization, it is important that the local
homomorphism of DVRs is regular.  
\end{proof}

\mni
To extend \cite[Proposition 1.18]{StarrXu}, we apply Lemma
\ref{lem-extend} to the parameter space of complete intersection
curves in a specified parameter datum.  Thus, let $S$ be the spectrum
of a Dedekind domain such that the fraction field $Q$ of $S$ has
characteristic $0$, and every residue field is a finite field.  For
a specified closed point of $S$, denote by $(\Lambda,\mf{m}_\Lambda)$
the stalk of the structure sheaf at that point.    
Thus $(\Lambda,\mf{m}_\Lambda)$ is a DVR
whose residue field $\kappa$ is a finite field or a characteristic $0$
field.    
Let there be specified a parameter datum over $S$.
Denote by $\Xx_{\ol{M}}\to \ol{M}$ a projective morphism (not
necessarily flat) that restricts to $\Xx_M$ over $M$.

\mni
Assume that the parameter datum satisfies the RSC property.
For
every integer $r\geq r_0$, 
denote by $\Cc_{W_r}\subset
W_r\times_{\SP Q} M_Q$ the universal closed subscheme.  Denote by
$\Cc_{G_r} \subset G_{\Lambda,r}\times_{\SP \Lambda} \ol{M}$ the
closure of $\Cc_{W_r}$.  
Denote the
restrictions to $\Cc_{W_r}$ of the two projections as follows,
$$
\rho_{W_r}:\Cc_{W_r}\to W_r,
$$
$$
\rho_{M,r}:\Cc_{W_r} \to M_Q.
$$
By
hypothesis, $\rho_{W_r}:\Cc_{W_r}\to W_r$ is smooth and
projective with geometric fibers that are irreducible curves.  
For every integer
$e$, denote by $\Pic{e}{\Cc/W_r}\to W_r$ the relative Picard scheme of
$\rho_{W_r}$ 
parameterizing invertible sheaves of degree $d$ on these curves.  The
relative degree over $W_r$ of the pullback to $\Cc_{W_r}$ of
$\OO_{\ol{M}_Q}(1)$ equals $e(r) = e_0 r^{m-1}$ for an integer $e_0$.
For every integer $\ell$, the pullback of $\OO_{\ol{M}_Q}(\ell)$
defines a global section of $\Pic{\ell e(r)}{\Cc/W_r}$ over $W_r$.

\mni
Denote by $f_{r}:\Xx_{r}\to \Cc_{W_r}$ the pullback by $\rho_{M,r}$ of
the universal family $f_{M_Q}:\Xx_{M_Q}\to M_Q$.  
Denote by $\ol{f}_r:\ol{\Xx}_r \to \Cc_{G_r}$ the closure of $\Xx_r$
in $\Cc_{G_r}\times_{\ol{M}} \Xx_{\ol{M}}$.
Denote by $\Ll_r$
the pullback of $\Ll_Q$ to $\Xx_r$.  Consider the base change by the
generic point $\SP(Q(W_r))\to W_r$ of
$\Xx_r$, $\Cc_{W_r}$ and $\Ll_r$.  By the RSC hypothesis, these base
changes form a rationally simply connected fibration.  Thus, by
\cite[Theorem 13.2]{dJHS}, there exists a sequence
$(Z_{e,Q(W_r)})_{e\geq \epsilon'(r)}$ of irreducible components of the
relative Hilbert scheme
$\text{Hilb}^{et+1-g_r}_{\Xx_r/W_r}\times_{W_r} \SP Q(W_r)$
satisfying the conditions listed above.  For every $e\geq
\epsilon'(r)$, define $Z_e$ to be the
closure of $Z_{e,Q(W_r)}$ in the relative Hilbert scheme
$\text{Hilb}^{et+1-g(C_Q)}_{\ol{\Xx}_r/G_r}$.  Define $\epsilon(r)$ to be
the smallest multiple $\ell e(r)$ of $e(r)$ such that $\epsilon(r)\geq
\epsilon'(r)$.

\begin{prop}\cite[Proposition 1.18]{StarrXu}
  \label{prop-concordheight} \marpar{prop-concordheight}
For every
integer $r>0$ there exists an integer $\epsilon(r)>0$ 
with the following
property.  For every closed point of $S$ with residue field $\kappa$,
for every field extension $L/\kappa$ such that $L$ is RC
solving, 
for every integral curve $\Cc_L \subset \ol{M}_L$
intersecting $M_L$, if $\text{deg}_{\Cc_L}(\OO_{\ol{M}}(1)) \leq r$ then there
exists a curve $\mc{C}_L\subset f_{\ol{M}}^{-1}(\Cc_L)$ with $f:\mc{C}_L\to \Cc_L$
an isomorphism over the generic point and with
$\text{deg}_{\mc{C}_L}(\Ll_{\ol{M}}) \leq \epsilon$.
\end{prop}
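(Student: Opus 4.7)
The plan is to lift the input configuration to characteristic zero, where the full rationally simply connected machinery of \cite{dJHS} and Proposition \ref{prop-concordmain} applies, and to extract a uniform degree bound by controlling the lifting.

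First, I would apply Lemma \ref{lem-extend} with $d=1$ to the morphism $z:\SP L(\Cc_L)\to \ol{M}$ induced by the generic point of $\Cc_L$. This produces an integral extension consisting of a smooth, quasi-projective $\Lambda$-scheme $B$ with integral fibers, a smooth projective morphism $\pi^o_B:C^o_B\to B$ of relative dimension one, and a $\Lambda$-morphism $z_B:C^o_B\to \ol{M}$ whose closed-fiber restriction recovers $\Cc_L$ after extending the base field to $L$. The generic fiber $B_K$, with $K=\text{Frac}(\Lambda)$, then parameterizes a characteristic-zero family of curves in $\ol{M}_K$ of $\OO_{\ol{M}}(1)$-degree bounded by $r$.

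Next, I would pass to a reducible degeneration. Fix $r''\geq r_0$ to be chosen. Up to an \'etale base change $B'\to B$ to get sections, I would attach to the fibers of $\pi^o_{B'}$ a general complete intersection curve from $W_{r''}$ meeting each fiber transversally at a general point. A standard smoothing deformation then yields a family $\wt{\Cc}_{B'}\to B'$ whose geometric generic fiber is a smooth irreducible curve of $\OO_{\ol{M}}(1)$-degree bounded by $r+e(r'')$; for generic parameters this curve is itself a complete intersection curve parameterized by $W_R$ for some $R$ depending only on $r$ and $r''$, so the pullback of $\Xx_M$ together with $\Ll_Q$ forms a rationally simply connected fibration by the RSC property of the parameter datum. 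Applying \cite[Theorem 13.1]{dJHS} to this RSC fibration produces, for every $e$ sufficiently large and divisible, an irreducible component $Z_e$ of the relative Hilbert scheme of sections whose Abel map to the relative Picard scheme has geometrically rationally connected generic fibers.

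Specializing the $\wt{\Cc}_{B'}$-family over $\Lambda$ to the reducible curve $\Cc_L\cup \Cc'$ in the closed fiber (with $\Cc'$ the attached $W_{r''}$-curve) and invoking Proposition \ref{prop-concordmain} together with the RC-solving hypothesis on $L$ yields, in the closed fiber, a section of degree $\leq e$ that restricts to sections over each irreducible component. The restriction to $\Cc_L$ provides the required curve $\mc{C}_L\subset f_{\ol{M}}^{-1}(\Cc_L)$, with $\Ll_{\ol{M}}$-degree bounded by an integer $\epsilon(r)$ depending only on $r$ and the fixed parameter datum. The main technical obstacle is ensuring that, upon specialization to the reducible curve, the chosen section does not collapse entirely onto $\Cc'$ and that its restriction to $\Cc_L$ has the claimed degree bound. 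This is handled by the $(g)$-freeness of the sections in the construction of $Z_e$, which guarantees flat deformation through the degeneration so that the flat limit assigns positive degree to each component of the reducible curve; combined with the bound on the total relative Picard degree, this forces the restriction to $\Cc_L$ to have degree bounded uniformly in $r$.
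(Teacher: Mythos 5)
Your plan diverges from the paper's at a crucial structural point, and the divergence introduces a gap. The paper does not try to smooth a reducible curve into a complete intersection curve. Instead it invokes \cite[Corollary 3.15]{StarrXu} to realize $\Cc_L$ directly as an irreducible component of multiplicity one of a reducible complete intersection curve parameterized by the Grassmannian $G_{\kappa,r}$, and it applies Lemma \ref{lem-extend} with $d=0$ and $E=F=L$ to the point $z:\SP L\to G_{\kappa,r}$ in that Grassmannian, not with $d=1$ to the generic point of $\Cc_L$. The Hilbert scheme component $Z_e$ and its resolution $\wt{Z}_e$ are then constructed once and for all over $G_{\Lambda,r}$ (not over the curve-by-curve data); the open $W^o_r$, the Henselian DVR $R$ at the generic point of $B_0$, a section $[\mc{A}]$ of $\Pic{e}{\Cc/W_r}$ coming from the pullback of $\OO_{\ol{M}}(\ell)$, and finally RC-solving for $L$ plus \cite[Proposition 4.1]{StarrXu} produce the curve $\mc{C}_L$ with a uniform degree bound.

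The concrete gap in your argument is in the step ``attach a $W_{r''}$-curve and smooth to a complete intersection curve in $W_R$.'' There is no reason the smoothing of the reducible curve $\Cc_L\cup\Cc'$ is again a complete intersection; the Hilbert scheme of curves in $\ol{M}$ has many components, and the degree arithmetic already obstructs it: a $W_R$-curve has $\OO_{\ol{M}}(1)$-degree exactly $e_0 R^{m-1}$, while your smoothing has degree $r + e_0 (r'')^{m-1}$, which is not of that form for generic $r$. So you cannot invoke the RSC property of the parameter datum for the smoothed curve, and the whole RSC machinery becomes inapplicable at that point. The correct move, as in the paper, is to keep the reducible curve (no smoothing at all): the Grassmannian $G_r$ already parameterizes arbitrary (possibly reducible) complete intersections, and the Bertini argument in \cite[Corollary 3.15]{StarrXu} produces a complete intersection through $\Cc_L$ with $\Cc_L$ as a multiplicity-one component. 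A secondary issue is that your final degree-control step leans on $(g)$-freeness to say the limiting section does not collapse onto $\Cc'$, but $(g)$-freeness governs unobstructedness of sections over the generic fiber, not the distribution of degree among components in a flat limit; the paper avoids this entirely because the degree bound comes from the fixed ample class $\OO_{\ol{M}}(\ell)$ on the total complete intersection, and the restriction to the component $\Cc_L$ is automatically bounded by the total degree.
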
   

\begin{proof}
Having specified $\epsilon(r)$ independent of $\kappa$, we are now
free to prove the proposition after replacing $S$ by $\SP \Lambda$.
Since $Q(W_r)$ has characteristic $0$, by resolution of singularities,
there exists a projective morphism $\wt{Z}_e\to Z_e$ such that the
geometric generic fiber of $\wt{Z}_e\to G_{\Lambda,r}$ is smooth.
There is an Abel map,
$$
\wt{\alpha}_e:\wt{Z}_e\times_{G_r} W_r \to \Pic{e}{\Cc/W_r}.
$$
Denote by $U_e\subset \Pic{e}{\Cc/W_r}$ the maximal open subscheme
over which $\wt{\alpha}_e$ is smooth and such that the fiber of
$\wt{\alpha}_e$ over every geometric point of $U_e$ intersects the
dense open subset of $\wt{Z}_e$ parameterizing closed subschemes that
are images of sections of $f_r$.  In particular, for $e$ equal to
$\epsilon(r)$, 
define $W^o_r\subset W_r$ to be the
image of the dense open $U_e$ under the smooth morphism
$\Pic{e}{\Cc/W_r}\to W_r$.  

\mni
By \cite[Corollary 3.15]{StarrXu}, the curve $\Cc_L$ is an
an irreducible component of multiplicity $1$ in a curve in $\ol{M}_L$
parameterized by a $\Lambda$-morphism $z:\SP L \to G_{\kappa,r}$.  By
Lemma \ref{lem-extend} applied to the parameter space
$G_{\Lambda,r}\to \SP \Lambda$ and the dense open subscheme $W^o_r$ of
the generic fiber $G_{Q,r}$, for $d=0$ and $E=F=L$, there exists an
integral extension of $z$, say
$$
((B\to \SP \Lambda, z_B:B\to G_r),\psi_B:\SP L \to B_0).
$$
Since $\Lambda$ is Henselian, $B$ has geometrically integral closed
fiber and generic fiber over $\SP \Lambda$.  

\mni
Define $R$ to be the
Henselization of the DVR $\OO$ obtained as the stalk of the structure
sheaf of $B$ at the generic point of $B_0$.  Then $\Lambda \to R$ is a
regular local homomorphism, $z_B$ induces a $\Lambda$-morphism,
$z_R:\SP R \to G_R$ mapping the generic point into $W^o_r$, and there
exists a $\Lambda$-morphism $\psi_R:\SP L\to \SP R$ whose composition
with $z_R$ equals $z$.  By construction, for $e=\epsilon(r) = \ell
e(r)$,  
the relative Picard
$\Pic{e}{\CC/W_r}$ has a section over $W_r$ coming from
$\OO_{\ol{M}}(\ell)$.  Thus the pullback by $z_R$ has a section over
$\SP \text{Frac}(R)$.  Since $R$ is Henselian, and since relative
Picard scheme is smooth, there exists a section $[\mc{A}]$ over $R$
that maps into the open subscheme $U_e$.  

\mni
Define $\wt{Z}_{R,\mc{A}}$ 
to be the
closure in $\wt{Z}_e\times_{G_r} \SP R$ of the fiber of
$\wt{\alpha}_e$ over $[\mc{A}]$.  Then $\wt{Z}_{R,\mc{A}}\to \SP R$ is
projective and flat.  By construction, the geometric generic fiber is
separably rationally connected.  Since $R$ is prime regular, and since
$L$ is RC solving, there exists an $L$-point of
$\wt{Z}_{R,\mc{A}}\times_{\SP R} \SP L$.  By \cite[Proposition
4.1]{StarrXu}, the image of this $L$-point in the Hilbert scheme 
$\text{Hilb}^{et+1-g(C_Q)}_{\ol{\Xx}_r/G_r}\times_{G_r} \SP L$
parameterizes a closed subscheme of $\ol{\Xx}_r\times_{G_r}\SP L$
whose restriction over a dense open $\Cc_L^o$ in $\Cc_L$ is the image of a
rational section of $\Xx_M\times_M \Cc_L^o \to \Cc_L^o$.  By
construction, the total degree of this closed subscheme has degree
$e(r)$.  Thus, the closed image of the rational section has degree
$\leq e(r)$.  
\end{proof}

\section{Perfect PAC fields and Global Function Fields} \label{sec-PACglob}  \marpar{sec-PACglob}

\mni
Let $S$ be $\SP \Lambda$, where $\Lambda$ is a Henselian DVR with
finite residue field $\kappa$.  Fix a parameter datum over $S$ with a
codimension $>1$ compactification.  
A \emph{global function field over $M$} is a function field over $M$,
$$
(\SP(R)\to S, E/F,z:\SP E\to M),
$$
such that the residue field extension $\kappa \to F= R/\mf{m}_R$ is
finite, i.e., $F$ is a finite field.  
Thus, the field $E$ -- a function field of a geometrically
integral $F$-curve -- is a global function field.

\begin{defn} \label{defn-global} \marpar{defn-global}
The parameter datum has \emph{rational points over global function
  fields} if for every global function field over $M$ as above, the
base change $X_E = \SP E\times_M X_E$ has an $E$-point.  
\end{defn}

\begin{prop} \label{prop-integral} \marpar{prop-integral}
If a parameter datum has rational points over global function fields,
then for every irreducible closed subset of the closed fiber 
$B\subset M_0$ with pullback family $\Xx_B=B\times_M \Xx_M$, 
there exists a PAC section of $\Xx_B\to B$.  Thus, for every field
extension $\kappa \to L$, for every $S$-morphism $\SP L\to M_0$, the
pullback family $\Xx_L = \SP L\times_M \Xx_M$ has a PAC section over
$\SP L$.  
In particular, if $L$ is a perfect PAC field, then $X_L$
has an $L$-point.
\end{prop}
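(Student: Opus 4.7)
The plan is to argue by contradiction and use Corollary \ref{cor-ffBertini} to reduce to the case of a curve over a finite field, where the global function field hypothesis applies directly. Suppose $m:=\dim B\geq 1$ and that $\Xx_B\to B$ admits no PAC section. Since a PAC section over a dense open of $B$ closes up scheme-theoretically to one on $B$, I may replace $B$ by its normal dense open locus and then use the very ample sheaf from the compactification $\ol M$ in the parameter datum to embed $B$ as a closed subscheme of $\PP^N_\kappa$. Applying Corollary \ref{cor-ffBertini} with codimension $c=m-1$ to $f:\Xx_B\to B$ produces a finite Galois extension $\kappa'/\kappa$ and a dense open $U\subset \text{Grass}_\kappa(\PP^{N-m+1},\PP^N)$ such that, for every finite extension $K/\kappa$ linearly disjoint from $\kappa'$ and every $[L]\in U(K)$, the slice $B_L:=B\times_{\PP^N_\kappa} L$ is a geometrically integral $K$-curve and the pulled-back family $\Xx_{B_L}\to B_L$ still has no PAC section. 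Since $\kappa$ is finite, any sufficiently large finite extension $K/\kappa$ of degree coprime to $[\kappa':\kappa]$ has $U(K)\neq\emptyset$, so such $K$ and $[L]$ exist.

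The generic point of $B_L$ now supplies a global function field over $M$ in the sense of Definition \ref{defn-global}: the field $E:=K(B_L)$ is the function field of a geometrically integral curve over the finite field $K=F$, and, taking $R$ to be the unique Henselian \'etale extension of $\Lambda$ with residue field $K$, the composition $\SP E\to B_L\hookrightarrow M$ is the required morphism $z$. By hypothesis, $\Xx_E=\SP E\times_M \Xx_M$ has an $E$-rational point; the scheme-theoretic closure of its image in $\Xx_{B_L}$ is an integral closed subscheme mapping birationally onto $B_L$, i.e., a rational (in particular PAC) section of $\Xx_{B_L}\to B_L$. This contradicts the previous paragraph, proving the first assertion when $\dim B\geq 1$. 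For the case $\dim B=0$ with $B=\SP F$, choose a one-dimensional irreducible closed subscheme $B'\subset M_0$ that is smooth at the unique closed point of $B$ (possible because $M_0$ is smooth of dimension $m>1$ over $\kappa$); the dimension-one case gives a rational section of $\Xx_{B'}\to B'$, which by properness of $f_M$ and the valuative criterion specializes at that smooth point to an $F$-rational point of $\Xx_F$, providing the desired PAC section of $\Xx_B\to B$.

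For the second assertion, let $B\subset M_0$ be the scheme-theoretic closure of the image of $\SP L\to M_0$, so that the given morphism factors through the generic point of $B$ via a field homomorphism $k(B)\hookrightarrow L$. The first assertion supplies a PAC section $Y\subset \Xx_B$, whose geometric generic fiber is by definition irreducible; this irreducibility persists after further enlargement of the algebraically closed coefficient field, so the reduced closed subscheme of $Y\times_B\SP L$ is a geometrically irreducible integral closed subscheme of $\Xx_L$, i.e., a PAC section of $\Xx_L\to\SP L$. For the final assertion, when $L$ is a perfect PAC field this PAC section is a quasi-projective, geometrically irreducible, finite type $L$-scheme, which admits an $L$-rational point by the defining property of perfect PAC fields, yielding the desired $L$-point of $\Xx_L$.

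The main obstacle will be the Bertini slicing step: one must simultaneously arrange geometric integrality of $B_L$ over a finite extension $K$ of $\kappa$ linearly disjoint from $\kappa'$, preserve non-existence of PAC sections, and verify that the resulting datum fits Definition \ref{defn-global} with a compatible DVR $R$ over $S=\SP\Lambda$. The zero-dimensional base case, handled by the specialization argument through a smooth one-dimensional curve, crucially uses the properness of $f_M$ built into the parameter datum.
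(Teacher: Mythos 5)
Your overall strategy matches the paper's: argue by contradiction, use Corollary~\ref{cor-ffBertini} to slice $B$ down to a geometrically integral curve over a finite field while preserving the absence of PAC sections, then apply the global function field hypothesis to get a rational section and reach a contradiction. The second and third assertions (base change along $\SP L\to B$, then apply the perfect-PAC property) are also handled as in the paper. However, there is one genuine gap and one loosely stated step.

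\textbf{The geometric integrality hypothesis.} You embed $B$ in $\PP^N_\kappa$ and apply Corollary~\ref{cor-ffBertini} to produce a Galois extension $\kappa'/\kappa$ and a dense open $U\subset\text{Grass}_\kappa(\PP^r,\PP^N)$. But both Theorem~\ref{thm-ffBertini} and Corollary~\ref{cor-ffBertini} require the base $B$ to be \emph{geometrically integral over the field being used}, and an irreducible closed subscheme $B\subset M_0$ need not be geometrically integral over $\kappa$; the closed fiber $M_0$ is only assumed smooth over $\kappa$, not geometrically connected, so the constant field of $B$ may be a proper finite extension of $\kappa$. The paper fixes this by replacing $\kappa$ with $F$, the algebraic closure of $\kappa$ in $k(B)$, choosing a new projective embedding $u\colon B\to\PP^N_F$ over $F$, and running the Bertini argument entirely over $F$; since $F$ is still a finite field, the supply of large finite extensions $K/F$ linearly disjoint from $F'/F$ is unaffected, and the resulting $E=K(B_L)$ is still a global function field. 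Without this substitution, both the claimed geometric integrality of the slice $B_L$ and the very applicability of the corollary fail.

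\textbf{The zero-dimensional base case.} The paper handles $\dim B = 0$, $B=\SP F$, directly: set $E=F(t)$ with $z$ factoring through the closed point; the global function field hypothesis yields an $E$-point of $\Xx_E$, whose extension to a morphism $\PP^1_F\to\Xx_F$ (by properness) has geometrically irreducible image, i.e.\ a PAC section. Your route through a one-dimensional $B'\supset B$ plus the valuative criterion can work, but your appeal to ``the dimension-one case gives a \emph{rational} section'' must be read carefully: as you set up the contradiction argument, the dimension-one case only produces a \emph{PAC} section (which need not be flat, nor even defined, at the chosen closed point). What actually gives a rational section of $\Xx_{B'}\to B'$ is the \emph{direct} application of the global function field hypothesis to $k(B')$ itself (with constant field the algebraic closure of $\kappa$ in $k(B')$), not the contradiction step. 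If you state that explicitly, the valuative-criterion specialization at the chosen smooth closed point is fine; as written the reference is ambiguous and could be misread as citing the weaker conclusion. The paper's direct $E=F(t)$ construction sidesteps this entirely and is cleaner.
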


\begin{proof}
First consider the case that $B$ has dimension $0$, i.e., as a
$\kappa$-scheme this equals $\SP F$ for a finite field extension
$F/\kappa$.  Define $E$ to be $F(t)$, the function field of $\PP^1_F$.
Define $z:\SP E \to M$ to be the composition of $\SP E\to \SP F$ and
the specified closed point $\SP F \to M$.  Since the parameter datum
has rational points over global function fields, there exists a
lifting of $z$ to an $F$-morphism $s:E\to \Xx_B$.  The closure $Y$ of the
image of this morphism is the image of an $F$-morphism from $\PP^1_F$,
and hence $Y$ is geometrically irreducible over $\SP F$.  Thus $Y$ is
a PAC section of $\Xx_B \to B$.

\mni
Thus, without loss of generality, assume that $B$ has dimension $m\geq
1$.  
Define $F$ to be the algebraic closure of $\kappa$ in the
fraction field of $B$.  Thus $B$ is a geometrically integral $F$-scheme.
Let $u:B\to \PP^N_F$ be a generically unramified, finite type
morphism.  Set $c$ equal to $m-1$, and set $r$ equal to $N-c$.  Thus,
$\text{Grass}_F(\PP^r,\PP^N)$ parameterizes linear subspaces of
codimension $c=m-1$, so that the inverse image under $u$ has dimension
$\geq 1$.

\mni
By way of contradiction, assume that $f_B:\Xx_B\to B$ has
no PAC section.  Then by Corollary \ref{cor-ffBertini}, there exists
a finite Galois extension $F'/F$ and a dense open subset $U\subset
\text{Grass}_F(\PP^r,\PP^N)$ such that for every field extension
$K/F$ that is linearly disjoint from $F'/F$, 
for every $[L]\in U(\SP K)$,
the curve $C=B\times_{\PP^N_F} L$ is a geometrically integral curve
over $K$ and for $\Xx_C := \Xx_B \times_B C$, the projection morphism
$f_C:\Xx_C \to C$ has no PAC section.  By elementary considerations of
the intersection of $U$ with any of the open affine spaces in
$\text{Grass}_F(\PP^r,\PP^N)$ forming an open Bruhat cell,
or by the more refined Lang-Weil estimates, there exists an integer
$d_0$ such that 
for every finite field extension $K/F$ of degree $\geq d_0$, $U(K)$ is
nonempty.  In particular, there exists such an extension of the finite
field $F$ of degree that is prime to $d=[F':F]$, e.g., of degree $dd_0+1$.
Since $\Xx_C\to C$ has no PAC section, it also has no rational
section.  This contradicts the hypothesis that the parameter datum has
rational points over all global fields.  This contradiction implies
that $\Xx_B\to B$ does have a PAC section.

\mni
Now for a field extension $\kappa \to L$ and a $S$-morphism $z:\SP L
\to M_0$, define $B\subset M_0$ to be the closure of the image of
$z$.  Then $B$ is an integral closed subscheme of $M_0$.  By the
argument above, $\Xx_B\to B$ has a PAC section.  The base change of
this PAC section by the dominant morphism $\SP L\to B$ is a PAC
section of $\Xx_L\to \SP L$.  
\end{proof}

\mni
By \cite[Proposition 1.12]{StarrXu}, if the parameter
datum satisfies the RSC property, then the parameter datum has
rational points over global function fields.

\begin{cor} \label{cor-integral} \marpar{cor-integral}
For every parameter datum that satisfies the RSC property, the
parameter datum has rational points over global function fields.  Thus,
for every
extension field $\kappa \to L$ for which $L$ is a perfect
PAC field, for every $S$-morphism $z:\SP L \to M$, $X_L$ has an $L$-point.
\end{cor}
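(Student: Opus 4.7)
The plan is to deduce both assertions of the corollary as direct consequences of two already-established results in the excerpt: Proposition~\ref{prop-concord} (which extends \cite[Proposition 1.12]{StarrXu} from finite fields to arbitrary RC solving fields) and Proposition~\ref{prop-integral} (which upgrades ``rational points over global function fields'' to points over perfect PAC fields via the Bertini machinery developed in Section~\ref{sec-Bertini}).

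First I will verify the first assertion. Fix a global function field datum $(\SP R \to S, E/F, z:\SP E \to M)$ as in Definition~\ref{defn-global}, so $F = R/\mf{m}_R$ is finite and $E$ is the function field of a geometrically integral $F$-curve. Because finite fields are RC solving by Theorem~\ref{thm-Esnault}, I may take $L := F$ in the final assertion of Proposition~\ref{prop-concord}: the RSC property on the parameter datum, together with the residue field inclusion $\kappa \to F$ of the closed point of $S = \SP \Lambda$, then yields an $E$-rational point on the pullback $X_E$. This is exactly what Definition~\ref{defn-global} demands.

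For the second assertion, I will apply Proposition~\ref{prop-integral} directly. Given any extension $\kappa \to L$ with $L$ a perfect PAC field, and any $S$-morphism $z:\SP L \to M$, note that $z$ automatically factors through the closed fiber $M_0$: the map $\Lambda \to L$ kills $\mf{m}_\Lambda$, since $\kappa$ has positive characteristic $p$ and hence so does $L$. Having just verified the hypothesis of Proposition~\ref{prop-integral}, that proposition then produces a PAC section of $X_L \to \SP L$, which for perfect PAC $L$ yields the desired $L$-point of $X_L$.

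No genuine geometric obstacle remains at this stage: all of the heavy machinery --- the RSC deformation theory of \cite{dJHS} underlying Proposition~\ref{prop-concord}, and the non-algebraically-closed Bertini theorem underlying Proposition~\ref{prop-integral} --- has already been carried out. The only real bookkeeping point is to confirm that the finite residue field $F$ of a global function field is a legitimate choice of RC solving field in Proposition~\ref{prop-concord}, and Theorem~\ref{thm-Esnault} supplies precisely this input.
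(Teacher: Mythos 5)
Your proof is correct and follows essentially the same two-step route as the paper: establish ``rational points over global function fields'' and then feed that into Proposition~\ref{prop-integral}. The only cosmetic difference is in the first step, where the paper invokes \cite[Proposition 1.12]{StarrXu} directly (which already concerns finite fields), whereas you reach the same conclusion via a small detour through the extended Proposition~\ref{prop-concord} combined with Theorem~\ref{thm-Esnault} to confirm that finite fields are RC solving; both are valid.
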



\section{The Grothendieck-Serre Conjecture and Serre's ``Conjecture
  II'' in Positive Characteristic} \label{sec-GrothSerre}  
\marpar{sec-GrothSerre}

\mni
The proof of 
Serre's ``Conjecture II'' for the function field $k(S)$ of a surface over an
algebraically closed field $k$ of arbitrary characteristic was completed
in \cite[Theorem 1.5]{dJHS}.  Since this is crucial for
establishing Serre's ``Conjecture II'' for function fields of curves
over perfect PAC fields that are nice, we briefly recall the proof and
clarify one point.

\mni
The main step there is the proof of Serre's ``Conjecture II'' for
those semisimple, connected, and simply connected groups over $k(S)$
that are of 
the form $G_0\times_{\SP k} \SP k(S)$, with $G_0$ a semisimple,
connected, and simply connected group over $k$.  Such a group is
necessarily split, since $k$ is algebraically closed.  In particular, since
the automorphism group of the split group $G_0$ of type $E_8$ is
precisely $G_0$ acting by conjugation, it follows that every group
$G$ over $k(S)$ of type $E_8$ is isomorphic to $G_0\times_{\SP k} \SP
k(S)$.  Thus, the split case of Serre's ``Conjecture II'' implies the
$E_8$ case of Serre's ``Conjecture II''.  Combined with tremendous
earlier work on Serre's ``Conjecture II'', the $E_8$ case of Serre's
``Conjecture II'' settles the full case of Serre's ``Conjecture II''
over $k(S)$, when $k$ has \emph{characteristic $0$}, cf. \cite[Theorem
1.2(v)]{CTGP}. 

\mni
In fact, the proof in characteristic $0$ implies the proof in
arbitrary characteristic via the type of lifting results from
\cite{dJS8} and further explored in \cite{StarrXu} and
this article.  

\begin{thm} \label{thm-redcharp} \marpar{thm-redcharp}
The characteristic $0$ case of Serre's ``Conjecture II'' for function
fields of surfaces implies the characteristic $p$ version.  Precisely,
for every algebraically closed field $F$ of characteristic $p$, for
every function field $E/F$ of a geometrically integral $F$-scheme of
dimension $2$, for every semisimple algebraic group $G_E$ over $\SP E$
that is connected and simply connected, for every torsor $\mc{T}_E$
over $\SP E$ for $G_E$, there exists an $E$-point of $\mc{T}_E$.
\end{thm}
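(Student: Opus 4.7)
The plan is to lift the characteristic-$p$ data $(E, G_E, \mc{T}_E)$ to analogous characteristic-$0$ data, invoke the hypothesized characteristic-$0$ Serre's Conjecture II to produce a rational point of the lifted torsor, and then specialize back to characteristic $p$ using the rational simple connectedness machinery of Sections \ref{sec-RCsolv}--\ref{sec-RSCspec}.

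First, realize $E = F(X)$ for a smooth, geometrically integral $F$-surface $X$; spread $G_E$ to a reductive group scheme $\mc{G}$ over a dense open $U \subset X$, and spread $\mc{T}_E$ to a $\mc{G}$-torsor $\mc{T}_U$. Then fix a prime finite DVR $(\Lambda, \mf{m}_\Lambda)$ of mixed characteristic $(0,p)$, e.g., a finite ramified extension of the Witt vectors of $\mathbb{F}_p$, and apply Lemmas \ref{lem-ft} and \ref{lem-extend} with $d = 2$ to realize $(X, \mc{G}, \mc{T}_U)$ as an integral extension of a parameter space $(M \to \SP \Lambda, f_M:\Xx_M \to M, \Ll)$ over $\Lambda$. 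Reductive group schemes have unobstructed deformations by SGA3, and the parameter-space formalism absorbs the $H^2$ obstruction to lifting the torsor itself; thus the generic fiber $M_\eta$ parameterizes characteristic-$0$ triples $(\wt{X}_\eta, \wt{\mc{G}}_\eta, \wt{\mc{T}}_\eta)$ of the same shape. Over the geometric generic point of $M_\eta$, the torsor $\wt{\mc{T}}_\eta$ is a torsor for a semisimple, connected, simply connected group over the function field of a geometrically integral surface in characteristic $0$, so by hypothesis it has a rational point.

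Next, reinterpret this rational point as a section of an auxiliary projective family of homogeneous $\wt{\mc{G}}_\eta$-spaces over $\wt{X}_\eta$. Restricted to a sufficiently general smooth curve $\Cc \subset \wt{X}_\eta$, this family is a rationally simply connected fibration in the sense of \cite[Definition 12.7]{dJHS}, by \cite[Theorem 13.1]{dJHS}: components of the Hilbert scheme of sections of sufficiently positive degree are rationally connected over the Picard scheme, via the very twisting scroll construction. This endows the parameter datum with the RSC property (Definition \ref{defn-RSCprop}). To descend, apply Corollary \ref{cor-integral} to this datum with $L = F$. The algebraically closed field $F$ of characteristic $p$ is a nice perfect PAC field (it contains every root of unity, and every geometrically integral variety trivially has an $F$-point), so by Theorem \ref{thm-StarrAx} it is RC solving. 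The corollary then supplies the desired $E$-rational point of $\mc{T}_E$.

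The main obstacle is the RSC verification: one must show that the Hilbert scheme of sections of sufficiently positive $\Ll$-degree of the auxiliary homogeneous-space fibration built from a $G$-torsor is rationally connected over the Picard scheme, and contains a very twisting scroll. This is the technical content of \cite[Theorem 13.1]{dJHS}, and checking that it applies uniformly over $M$ (in particular, over the closed fiber corresponding to the specialization point $\SP F \to M_0$) requires care. A secondary difficulty is the lifting step itself: although reductive group schemes deform unobstructedly, the torsor class need not lift pointwise, and one must use the parameter-space formalism (rather than a direct deformation) to transfer the data across characteristics while retaining enough control to apply the RSC specialization machinery.
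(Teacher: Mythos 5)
Your proposal takes a genuinely different route from the paper's, and unfortunately the route has a gap that I do not see how to fill.

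The central difficulty is a mismatch in transcendence degree. The RSC machinery you want to invoke --- Proposition \ref{prop-concord}, Corollary \ref{cor-integral}, and the supporting apparatus of Sections \ref{sec-RCsolv}--\ref{sec-PACglob} --- is calibrated to produce $E$-points when $E$ is the function field of a \emph{curve} over an RC solving field $L$. You set $L = F$, an algebraically closed field, but then the output of Corollary \ref{cor-integral} is an $L$-point of the pullback family $X_L$, not an $E$-point. The field $E$ in Theorem \ref{thm-redcharp} is the function field of a \emph{surface} over $F$, so it has transcendence degree $2$ over the algebraically closed constant field, whereas the corollary's conclusion lives at transcendence degree $0$ or, via Proposition \ref{prop-concord}, at transcendence degree $1$ over $L$. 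To bridge this you would have to rewrite $F(\text{surface})$ as $L'(\text{curve})$ with $L'$ RC solving, but the only candidate for $L'$ is $F(\text{curve in the surface})$, which is not a PAC field and is only RC solving in the limited sense of Theorem \ref{thm-RCfib}; Corollary \ref{cor-integral} as stated is not applicable with that $L'$ because the corollary is about parameter data over a Henselian DVR with finite residue field and its conclusion is about maps $\SP L \to M_0$, not about function fields of curves over $L$. Your invocation ``the corollary then supplies the desired $E$-rational point'' does not follow from the cited statement.

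A secondary confusion is the direction of the RSC argument. You write that the restriction of the auxiliary homogeneous-space family to a curve ``is a rationally simply connected fibration \ldots by \cite[Theorem 13.1]{dJHS}'', but in that reference the RSC hypotheses are \emph{inputs}, not conclusions. The RSC property of a family of flag varieties or homogeneous spaces is a geometric verification that holds independently of whether the ambient torsor is trivial; the characteristic-$0$ rational point you extract from the hypothesis does not ``endow the parameter datum with the RSC property,'' and the logical connection between that rational point and the RSC property is not made. In fact, the geometric RSC property by itself is exactly what \cite{dJHS} already uses to settle the split case in arbitrary characteristic, so if your argument were to go through along these lines, the characteristic-$0$ hypothesis of the theorem would become vacuous, which is a signal that something has gone wrong.

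What is genuinely missing from your proposal is the specialization mechanism that replaces the RSC machinery here, namely Nisnevich's solution of the Grothendieck--Serre conjecture over a DVR. The paper's proof takes the integral model $(B \to \SP\Lambda, \pi^o_B: \Cc^o_B \to B, (G_{\Cc}, \mc{T}_{\Cc}))$ with $d=2$, uses the characteristic-$0$ hypothesis to find a finite extension $B' \to B$ over which the generic fiber of the torsor becomes trivial, and then looks at the DVR $\OO$ which is the local ring of $\Cc^o_B$ at the generic point of its closed fiber. Over $\SP\OO$ one has a semisimple group scheme and a torsor that is trivial over the fraction field; Nisnevich's theorem then says it is trivial over the residue field, and pushing forward through $\psi_E$ trivializes $\mc{T}_E$. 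This is a strictly more direct mechanism than RSC and is the right one for crossing characteristics at the level of torsors for group schemes; nothing in your proposal plays this role.

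One small correct ingredient: you are right that an algebraically closed field of characteristic $p$ is a nice perfect PAC field and is RC solving. But that fact is not where the action is for this theorem.
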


\begin{proof}
Let $(\Lambda,\mf{m}_\Lambda)$ be a Henselian DVR whose
residue field $\kappa$ is a finite field of characteristic $p$ 
and whose fraction field $Q$ has
characteristic $0$.  Let $H_0$ be a semisimple
group that is connected and split.  Let $\kappa \to F$ be a field
extension (not yet assumed algebraically closed), 
let $E/F$ be the function field of a geometrically integral
$F$-scheme of dimension $d$ (not yet assumed equal to $2$), 
let $G_E$ be a linear algebraic group
over $E$ such that $G_E\times_{\SP E}\SP \ol{E}$ is isomorphic to
$H_0\times_{\SP \kappa} \SP \ol{E}$ as a group scheme over $\ol{E}$,
and let $\mc{T}_E$ be a $G_E$-torsor over $\SP E$.  
By the proof of \cite[Corollary 1.22]{StarrXu} and Lemma
\ref{lem-extend}, there exists an integral model, 
$$
(B\to \SP \Lambda, \pi^o_B:C^o_B\to B,(G_{\Cc}\to
\Cc^o_B,\mc{T}_{\Cc}\to \Cc^o_B)),
$$
and a pair
$$
(\psi_B:\text{Frac}(B_0)\to F,\psi_E:F(C^o_F)\to E)
$$ 
as in Definition \ref{defn-int}, and where $G_{\Cc}\to \Cc^o_B$, resp.
$\mc{T}_{\Cc}\to \Cc^o_B$,
is a
semisimple group scheme, resp. 
is a torsor for this group scheme, such that the base change by
$\psi_E$ is $(G_E,\mc{T}_E)$.  The choice of parameter datum $M$ for
this integral extension involves the automorphism group scheme of
$H_0$ and is fully explored in \cite[Proof of
Corollary 1.22]{StarrXu}.  

\mni
Now assume that $d$ equals $2$.
The function field $Q(B)$ is
characteristic $0$, so by the characteristic $0$ case of Serre's
``Conjecture II'', there exists a finite extension $Q(B')/Q(B)$ such
that after base change by this extension, the torsor is trivial.
After replacing $B$ by a dense Zariski open whose complement has
codimension $\geq 2$, which thus has nonempty intersection with the
closed fiber $B_0$,
there exists a finite, flat morphism
$B'\to B$ such that $B'$ is integral and the associated extension of
fraction fields is the extension $Q(B')/Q(B)$ above.  It may well
happen that $B'\times_{\SP \Lambda} \SP \kappa$ is reducible.  Choose
one irreducible component $B'_0$, and replace $B'$ by the open
complement of the remaining irreducible components.  Then the residue
field $\kappa(B'_0)$ is a finite algebraic extension of
$\kappa(B_0)$.  

\mni
Assume now that $F$ is algebraically closed.  Then there is a
factorization of $\psi_B:\SP E\to \SP \kappa(B_0)$ through
$\psi_{B'}:\SP E \to \SP \kappa(B'_0)$.  Thus, up to replacing $B$ by
$B'$, assume that the generic fiber of $\mc{T}_{\Cc}\to \Cc^o_B$ is a
trivial torsor over the fraction field $Q(\Cc^o_B)$.  Denote by $\OO$
the DVR that is the stalk of the structure sheaf of $\Cc^o_B$ at the
generic point of the closed fiber $\Cc^o_0=\Cc^o_B\times_{\SP \Lambda}
\SP \kappa$.  Then the pullback of $G_{\Cc}$ and $\mc{T}_{\Cc}$ over
$\SP \OO$ form a semisimple group scheme and a torsor for that group
scheme over a DVR.  By construction, this torsor is trivial when
restricted to the fraction field $Q(\Cc^o_B)$ of $\OO$.  Thus, by
Nisnevich's solution of the Grothendieck-Serre Conjecture over DVRs,
\cite{Nisnevich}, the restriction of the torsor over the
residue field of $\OO$ is also trivial.  Taking the further base
change by $\psi_E$, it follows that the original torsor $\mc{T}_E$
over $\SP E$ is trivial.
\end{proof}


\section{Proofs of Theorems \ref{thm-SerreIIpac}, 
\ref{thm-PeriodIndexpac},\ref{thm-C2nicepac}}
\label{sec-proofs} \marpar{sec-proofs}

\mni
By Theorem \ref{thm-HX} and Theorem \ref{thm-StarrAx}, every perfect
PAC field $L$ that is nice is RC solving.  Thus, by Proposition
\ref{prop-concord}, for every regular, integral 
Noetherian scheme $S$ of dimension $\leq
1$ whose function field has characteristic $0$, for every parameter
datum over $S$ with a codimension $>1$ compactification that satisfies
the RSC property, for every function field $E=L(\eta)$ of a
geometrically integral $L$-curve, for every $S$-morphism $z_E:\SP E\to
M$, the pullback $\Xx_E$ of the universal family $\Xx_M\to M$ by $z_E$
has an $E$-rational point.  

\mni
\textbf{The $C_2$ Property and the Period-Index Theorem.}
By \cite[Proposition 1.19]{SPAC}, there is
a parameter datum as above whose universal family is the family of
$2$-Fano complete intersections in projective space, resp. the family
of minimal homogeneous varieties.  Thus, every function field
$L(\eta)$ of a geometrically integral curve over a perfect PAC field
that is nice is $C_2$.  One example of minimal homogeneous spaces are
generalized Severi-Brauer varieties, i.e., a smooth projective $E$-scheme
$X_E$ such that $X_E\otimes_{\SP E}\SP \ol{E}$ is isomorphic to a
(standard $A_n$-type) Grassmannian $\text{Grass}_{\ol{E}}(r,\ol{E}^n)$ 
whose associated Isom torsor
reduces to the group of inner automorphisms (there are outer
automorphisms only if $n$ equals $2r$, $r>1$). 
Thus, each generalized Severi-Brauer
variety $\Xx_E$ has an $E$-point if (and only if) it has 
vanishing elementary obstruction, i.e.,
if there exists an invertible sheaf $\Ll_E$ on $\Xx_E$ whose base change
to $\Xx_E\times_{\SP E} \SP \ol{E}$ generates the Picard group.  As
explained in \cite[Theorem 11.1]{SStrsbg} (based on
joint work with de Jong from \cite{dJS8}), this implies that
Period equals Index for Severi-Brauer varieties over $E$.

\mni
\textbf{The Split Case of Serre's ``Conjecture II''.  Full Serre's
  ``Conjecture II'' in Characteristic $0$.}
Next, as explained in the proof of \cite[Theorem 1.4]{dJHS},
existence of $E$-rational points on minimal homogeneous varieties
implies Serre's ``Conjecture II'' for torsors over $E$ for semisimple
groups that are connected, simply connected, and \emph{split}.  As
explained in the proof of Theorem \ref{thm-redcharp}, when $E$ is of
characteristic $0$, this implies the full Serre's ``Conjecture II''.
Thus, it only remains to prove Serre's ``Conjecture II'' in case $L$
has characteristic $p$.  Since $L$ is nice, $L$ contains the algebraic
closure $\ol{\kappa}$ of the prime field.

\mni
\textbf{Full Serre's ``Conjecture II'' in Positive Characteristic.}
By the proof of Theorem \ref{thm-redcharp}, for every semisimple
algebraic group $G_E$ over $E$ that is connected and simply connected,
and for every $G_E$-torsor $\mc{T}_E$, there exists an integral
model.  In particular, the closed fiber of this integral model gives a
smooth, quasi-projective $\kappa$-scheme $B_0$ that is integral (but
typically not 
geometrically irreducible), a quasi-projective, smooth morphism $C_0^o\to B_0$
whose geometric fibers are irreducible curves, a semisimple group
scheme $G_{\Cc,0}\to \Cc_0^o$ whose geometric fibers are connected and
simply connected, and a torsor $\mc{T}_{\Cc,0}\to \Cc_0^o$ under
$G_{\Cc,0}$.  Moreover, there is a homomorphism of $\kappa$-extensions 
$\psi_B:\kappa(B_0)
\to L$ and an associated isomorphism of $F$-extensions
$\psi_E:F(\Cc_F^o)\to E$ such that the base changes by $\psi_E$ of
$G_{\Cc,0}$, resp. $\mc{T}_{\Cc,0}$ equal $G_E$, resp. $\mc{T}_E$.

\mni
Since $B_0$ is a quasi-projective, smooth, irreducible
$\kappa$-scheme, $\kappa(B)/\kappa$ is a finitely generated field
extension.  Thus the algebraic closure $\kappa'$ 
of $\kappa$ in $\kappa(B)$ is a
finite extension of $\kappa$, and so it is again a finite field.  Note
that $B$ is geometrically integral over $\kappa'$.  Via the morphisms
to $B_0$, the schemes $\Cc_0^o$, $G_{\Cc,0}$ and $\mc{T}_{\Cc,0}$ are
all quasi-projective $\kappa'$-schemes.

\mni
Denote by $\Cc_0$ a projective compactification of $\Cc_0^o$.  Up to
replacing $B_0$ by a dense open subscheme, assume that $\Cc_0\to B_0$ is
projective and flat.  Also, up to normalizing (all schemes are of
finite type over $\kappa'$, so normalization is finite), 
assume that $\Cc_0^o$ is normal.  Then the
non-regular locus has codimension $2$.  Since $\Cc_0$ has relative
dimension $1$ over $B_0$, the image of the non-regular locus in $B_0$
is a closed subset of codimension $\geq 1$.  Up to shrinking $B_0$
once more, assume that $\Cc_0$ is regular.  

\mni
Denote by $\ol{\mc{T}}_{\Cc,0}$ a projective compactification of
$\mc{T}_{\Cc,0}$.  As above, the non-flat locus of
$\ol{\mc{T}}_{\Cc,0}\to \Cc_0$ has codimension $\geq 2$ in $\Cc_0$.
The image of this subset in $B_0$ is a closed subset of codimension
$\geq 1$.  Thus, up to shrinking $B_0$ once more, assume that
$\ol{\mc{T}}_{\Cc,0} \to \Cc_0$ is projective and flat.

\mni
Inside the relative Hilbert scheme
$\text{Hilb}_{\ol{\mc{T}}_{\Cc}/B_0}$, there is a locally closed
subscheme $\text{Sec}$ such that for every $B$-scheme $T$, the
$T$-points of $\text{Sec}$ are precisely those closed subschemes $\ol{Z}\subset
T\times_{B_0} \ol{\mc{T}}_{\Cc,0}$ satisfying the following conditions:
for the intersection
$Z$ of $\ol{Z}$ with the open subscheme $T\times_{B_0} \mc{T}_{\Cc,0}$,
for the projection $Z\to T\times_{B_0} \Cc_0$, the maximal open subset
of $T\times_{B_0}\Cc_0$ over which this projection is an isomorphism
is an open subset that surjects to $T$.  Denote by $Z^o$ the inverse
image in $Z$ of this open subset, so that $Z^o\to T\times_{B_0} \Cc_0$
is an open immersion.  Thus, $Z^o$ defines a rational section of the
base change morphism,
$$
\mc{T}_{\Cc,0}\times_{B_0}\text{Sec} \to \Cc_0\times_{B_0} \text{Sec}.
$$
Of course the scheme $\text{Sec}$ has countably
many connected components $(\text{Sec}_i)_{i\in I}$  
indexed by the countably many possible
Hilbert polynomials of $Z$. 

\mni
The claim is that there exists $i\in I$ such that the morphism
$$
\phi_i:\text{Sec}_i\times_{\SP \kappa'} \SP \ol{\kappa}' \to B_0\times_{\SP
  \kappa'} \SP \ol{\kappa}',
$$
has a PAC section.  
Assuming the claim, since $L$ is assumed to
contain $\ol{\kappa}'$, the base change $\SP L \times_{B_0} \text{Sec}$
has an $L$-point.  For this $L$-point, the rational section $Z^o$
defines an $E$-point of $\mc{T}_E$.  Thus, it suffices to prove the
claim.  

\mni
If $B_0$ has dimension $0$, then the claim is true.  Indeed, $B_0\to
\SP \kappa'$ is an isomorphism, and
$\Cc_0\to B_0$ is a geometrically integral curve over the finite field
$\kappa'$.
By Steinberg's Theorem, \cite{Steinberg}, or 
by \cite{dJS},
after base change to $\SP \ol{\kappa}',$
the pullback of the torsor $\mc{T}_{\Cc,0}$ over this curve has
a rational section.  Thus, assume that $B$ has dimension $m\geq 1$.

\mni
The existence or non-existence of a PAC section of $\phi_i$ 
is preserved by base change from the algebraically closed field
$\ol{\kappa}'$ by any extension $\ol{\kappa}'\hookrightarrow k$ with $k$
an algebraically closed field.  
Let $u:B_0\to \PP^N_{\kappa'}$ be a generically unramified, finite type
morphism.  Define $c$ to be $m-1$, and define $r$ to be $N-c$.
Define $k$ to be the algebraic closure of the function field of
$\text{Grass}_{\kappa'}(\PP^r,\PP^N)$.  Define $[H]\in
\text{Grass}_{\kappa'}(\PP^r,\PP^N)(\SP k)$ to be the $k$-point
corresponding to the universal linear space.  By the proof of Theorem
\ref{thm-ffBertini}, the corresponding curve
$B_H=B\times_{\PP^N_{\kappa'}} H$ is a smooth, irreducible, 
quasi-projective curve
over $k$.

\mni
The base change of $\Cc_0\to B_0$ by $B_H\to B_0$ gives a projective,
flat, generically smooth morphism $\Cc_H\to B_H$ with geometrically
irreducible fibers.  Since $B_H$ is
itself a smooth, irreducible, 
quasi-projective curve over the algebraically closed
field $k$, $\Cc_H$ is a generically smooth, irreducible, 
quasi-projective surface
over $k$.  The pullback of
$\mc{T}_{\Cc,0}$ over the surface $\Cc_H$ is a torsor for a
semisimple, connected, and simply connected algebraic group over
$\Cc_H$.  By Theorem \ref{thm-redcharp}, there exists a rational
section of this torsor.  The closure of this section in the pullback of
$\ol{\mc{T}}_{\Cc,0}$ defines a closed subscheme $Z$.  The projection
$Z\to B_H$ is flat, since $B_H$ is a smooth curve.  Over a dense open
subset of $B_H$, this closed subscheme defines a section of the
restriction of $\phi_i$, for $i$ equal to the Hilbert polynomial of
the fibers of $Z\to B_H$.

\mni
Note that $\ol{\kappa}'$ is already algebraically closed, so every
finite Galois extension is an isomorphism, thus automatically linearly
disjoint from every field extension $K/k$.
For this choice of $i$, if there is no PAC section of $\phi_i$, then by
Corollary \ref{cor-ffBertini}, there exists a dense, Zariski open
subset $U_i \subset \text{Grass}_{\ol{\kappa}}(\PP^r,\PP^N)$ such that
for every field 
extension $K/\kappa$ 
and for every $[H']\in U_i(\SP K)$, there is no PAC section of
the restriction of $\phi_i$ over the curve $B_0\times_{\PP^N_{\kappa'}} H'.$
For $K$ equal to $k$ and for $[H]$, there is a PAC section of
the restriction of $\phi_i$ over $B_0\times_{\PP^N_\kappa} H$.
Therefore, there does exist a PAC section of $\phi_i$.  Thus, the
torsor $\mc{T}_E$ has an $E$-point.

\mni
\textbf{Low Degree Complete Intersections in Grassmannians.}
By \cite[Proposition 1.19]{StarrXu}, there exists a parameter
datum $M$ with a codimension $>1$ compactification for pairs $(\Xx,\Ll)$
of polarized schemes whose base change to an algebraic closure is
isomorphic to $\text{Grass}(r,K^{\oplus n})$ with its Pl\"{u}cker
invertible sheaf.  Denote by $P\to M$ the projective bundle
parameterizing $1$-dimensional subspaces of $H^0(\Xx,\Ll^{\otimes
  d})$.  This admits a codimension $>1$ compactification
$P\hookrightarrow \ol{P}$ by the same
GIT construction used to construct the codimension $>1$
compactification of $M$.
Inside $\Xx_M\times_M P$, define $Y_P$ to be the closed
subscheme that is the zero scheme of the corresponding global section
of $\Ll^{\otimes d}$.  The projection $Y_P\to \Xx_M$ is a projective
bundle.  Thus $Y_P$ is smooth over $S$.  Thus, by \cite[Lemma
4.4]{StarrXu}, the datum satisfies the first hypothesis of the 
RSC property.  The inequality $(3r-1)d^2-d<n-4r-1$ implies the
inequality $r(n-r) \geq 3$.  Thus, by \cite[Corollary 4.6]{StarrXu},
the datum satisfies the second hypothesis of the RSC property. 
The third hypothesis of the RSC property follows by construction:
since $\Ll$ is relatively ample for $\Xx_M\to M$, it is also
relatively ample for $\Xx_M\times_M P \to P$, and thus its restriction
to $Y_P$ is relatively ample for $Y_P\to P$.  Finally, hypotheses
four, five, and six are verified in the PhD thesis of Robert Findley,
\cite{Findley}.


\section{Proof of Theorem \ref{thm-AKnicepac}}
\label{sec-proofsAK} \marpar{sec-proofsAK}

\mni
Let $(R,\mf{m}_R)$ be a Henselian DVR with residue field $k$ (no
hypothesis yet on $k$) and with fraction field $K$.  Assume that $R$
is excellent, i.e.,
$\text{Frac}(\wh{R})/K$ is a separable extension.  This holds
automatically if $K$ has characteristic $0$ or if $R$ is complete.

\begin{lem} \label{lem-CDtrans} \marpar{lem-CDtrans}
If the residue field $k$ is separably closed, then $K$ has cohomological 
dimension $\leq
1$.  If $k$ is algebraically closed, then $K$ has
dimension $\leq 1$, and it 
is even $C_1$.
If the cohomological dimension of $k$ is $\leq 1$, then the
cohomological dimension of $K$ is $\leq 2$ under either of the
following conditions: if $K$ has characteristic $0$ or if $R$ is complete.  
If $k$ is
perfect of
dimension $\leq 1$, then for every finite extension $K'/K$,
for every Severi-Brauer variety
over $K'$, the Period equals the Index.    
\end{lem}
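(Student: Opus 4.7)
The plan is to handle the four assertions separately, using the structure of the absolute Galois group and Brauer group of a Henselian discretely valued field, together with Artin approximation to transfer from $\hat{R}$ down to $R$. The excellent and Henselian hypotheses together with Hensel's lemma give a bijection between finite separable extensions of $K$ and of $\hat{K}$, hence an isomorphism $\text{Gal}(K^{\text{sep}}/K) \cong \text{Gal}(\hat{K}^{\text{sep}}/\hat{K})$, so the cohomological dimensions and Brauer groups of $K$ and $\hat{K}$ coincide and many steps reduce to the complete case.

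For Claim 1, if $k$ is separably closed there are no nontrivial unramified extensions, so $\text{Gal}(K^{\text{sep}}/K)$ coincides with its inertia subgroup, which is well-known to have cohomological dimension $\leq 1$ (Serre, \emph{Cohomologie galoisienne} II.3). For Claim 2, since $k$ is algebraically closed, every finite separable extension $K'/K$ is a Henselian DVR whose residue field is again algebraically closed, and the Witt exact sequence
\[
0 \to \text{Br}(R') \to \text{Br}(K') \to H^1(k, \mathbb{Q}/\mathbb{Z}) \to 0
\]
together with the vanishing of both end terms gives $\text{Br}(K') = 0$, hence dimension $\leq 1$. For the $C_1$ property in the paper's sense, a specialization of a Fano complete intersection is defined by homogeneous polynomials of degrees $d_1, \dots, d_c$ with $\sum d_i < n$; by classical Tsen-Lang applied over $\hat{K}$ there is a $\hat{K}$-point of the zero scheme, and after scaling it becomes an $\hat{R}$-point with at least one unit coordinate. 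Artin-Popescu approximation, valid because $R$ is excellent Henselian, then yields a nearby $R$-point of the same scheme, whose image in the generic fiber is the desired $K$-point.

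For Claim 3, the key input is Serre's theorem that $\text{cd}(\hat{K}) \leq \text{cd}(k) + 1$ for complete discretely valued fields (\emph{Galois Cohomology} II.4.3, Proposition 12). If $R$ is complete, this gives the conclusion immediately; if instead $K$ has characteristic $0$, the identification of absolute Galois groups above reduces the assertion to the complete case. For Claim 4, by the same identification I may assume $R$ is complete. A finite extension $K'/K$ corresponds to a Henselian DVR with residue field $k'$ a finite extension of $k$, hence perfect of dimension $\leq 1$, so $\text{Br}(k') = 0$, and the Witt sequence identifies $\text{Br}(K') \cong \text{Hom}_{\text{cts}}(\text{Gal}(k'^{\text{sep}}/k'), \mathbb{Q}/\mathbb{Z})$. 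A character of order $n$ corresponds to a cyclic extension of $k'$ of degree $n$, which lifts uniquely to an unramified cyclic extension $K''/K'$, and the Brauer class is represented by the cyclic algebra $(K''/K', \text{Frob}, \pi)$ for a uniformizer $\pi$. Because $K''/K'$ is unramified, the norm multiplies valuations by $n$, so $\pi$ is not a norm; the cyclic algebra is therefore a division algebra of dimension $n^2$, giving period equal to index equal to $n$.

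The main obstacle is the $p$-primary part of the Brauer group when $\text{char}(k) = p > 0$, where the elementary Witt exact sequence fails and one must invoke Kato's logarithmic-differential refinement. The fact that $k$ is perfect of dimension $\leq 1$ still forces $\text{Br}(k')\{p\} = 0$, so the $p$-part of $\text{Br}(K')$ remains controlled by characters of $\text{Gal}(k'^{\text{sep}}/k')$, and the same unramified cyclic algebra argument goes through after replacing Kummer with Artin-Schreier type constructions where needed.
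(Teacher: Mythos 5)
Your proof is correct in its essentials but organized around a genuinely different reduction principle than the paper's. Your key move is the isomorphism $\text{Gal}(K^{\text{sep}}/K) \cong \text{Gal}(\hat{K}^{\text{sep}}/\hat{K})$ for an excellent Henselian DVR, which you use to reduce all four claims to the complete case in one stroke. The paper does not invoke this isomorphism; instead it applies Greenberg's approximation theorem separately to three different classes of varieties: Severi--Brauer varieties (to transfer vanishing of $\text{Br}(K)[\ell]$ from $\hat{K}$ to $K$), the Tsen--Lang complete intersections (for $C_1$), and reduced-norm equations for orders in central simple algebras (via the Merkurjev--Suslin characterization of $\text{cd} \leq 2$). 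Both routes bottom out in the same fact about excellent Henselian DVRs; yours is more uniform, while the paper's is more concrete and avoids having to justify the Galois-group identification, which itself relies on a combination of Artin approximation (to see $K$ is separably algebraically closed in $\hat{K}$) and Krasner's lemma (for surjectivity of $\text{Gal}(\hat{K}) \to \text{Gal}(K)$), not merely Hensel's lemma as you state.

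Two imprecisions are worth flagging. First, the absolute Galois group isomorphism does not by itself give $\text{Br}(K) \cong \text{Br}(\hat{K})$, since the Galois module $(K^{\text{sep}})^\times$ differs from $(\hat{K}^{\text{sep}})^\times$; your sentence claiming the Brauer groups coincide via that isomorphism is therefore not literally a consequence, though the conclusion is true and your subsequent residue-sequence computation recovers it. Second, for the period-equals-index part your argument matches the paper's in substance, but the paper obtains the $p$-primary case directly from Grothendieck's residue exact sequence \cite[Proposition 2.1]{BrauerIII}, which applies to the full Brauer group precisely because $k$ is perfect; your detour through a Kato-style refinement and Artin--Schreier constructions is not needed. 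Also a small slip: for general $k'$ the character does not canonically give a Frobenius, so the cyclic algebra should be written with respect to a chosen generator of the cyclic Galois group rather than ``$\text{Frob}$.''
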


\begin{proof}
Let $K'/K$ be any finite algebraic extension.  Since $R$ is excellent,
the integral closure $R'$ of $R$ in $K'$ is a finite $R$-module.
Moreover, $R'$ is a semilocal ring whose localization at any maximal
ideal is a DVR.  The residue fields of $R'$ are finite extensions of
$k$.  The above hypotheses on $k$ are each preserved under finite
field extension.  Thus, the localizations of $R'$ satisfy the same
(respective) hypotheses as $R$.  Thus, every argument below for $K$
also applies to $K'$.

\mni
If $K$ is complete, and if $k$ has cohomological dimension $\leq 0$,
resp. $\leq 1$, then also $K$ has cohomological dimension $\leq
1$. resp. $\leq 2$, \cite[Proposition II.12, p. 85]{GalCoh}.  

\mni
Next
consider the case that $K$ is not complete.  For every prime $\ell$
different from the characteristic of $K$, $\text{cd}_\ell(K)\leq 1$ if
and only if $\text{Br}(K)[\ell]=\{0\}$, \cite[Proposition II.4,
p. 76]{GalCoh}.  Given a Severi-Brauer variety $X_K\to \SP K$ whose
period equals $\ell$, there is a projective, flat model $X_R\to \SP R$
(typically ramified over the closed point).  If $\text{Frac}(\wh{R})$
has cohomological dimension $\leq 1$, then $X_R\times_{\SP R} \SP
\wh{R}$ has an $\wh{R}$-point.  Then by approximation \cite[Theorem
1]{Greenberg}, also $X_R$ has an $R$-point.  If $K$ has characteristic
$p$, then the $p$-cohomological dimension of $K$ is $\leq 1$,
\cite[Proposition II.3, p. 75]{GalCoh}.  Thus, if
$\text{Frac}(\wh{R})$ has cohomological dimension $\leq 1$, then also
$K$ has cohomological dimension $\leq 1$.  Please note: this argument
does not say anything about $\text{Br}(K)[p]$.

\mni
By \cite{Lang52}, if $R$ is complete and $k$ is algebraically closed,
then $K$ is a $C_1$-field.  Again applying \cite[Theorem 1]{Greenberg}, this
also holds if $R$ is excellent but not necessarily complete.  

\mni
Next assume that $K$ has characteristic $0$
and that $k$ has cohomological dimension $\leq 1$.  Then
$\text{Frac}(\wh{R})$ has cohomological dimension $\leq 2$. 
By the
Merkurjev-Suslin Theorem, \cite[Corollary 24.9]{Suslin84}, the field
$K$, resp. $\text{Frac}(\wh{R})$, has cohomological dimension $\leq 2$
if and only if, for every central simple algebra over $K$,
resp. over $\text{Frac}(\wh{R})$, the reduced norm is surjective.  For
a central simple algebra $A_K$ over $K$, this extends to a finite,
flat $R$-algebra $A_R$ (possibly ramified at the closed point).  For
every $r\in R$, the equation $\text{Nrd}_{A_R/R}(x) = r$ have a
solution in $\wh{R}$, since $\text{Frac}(\wh{R})$ has cohomological
dimension $\leq 2$.  Thus, again applying \cite[Theorem 1]{Greenberg},
also there is a solution over $R$.  Thus $K$ has cohomological
dimension $\leq 2$.

\mni
Finally, assume that $k$ is perfect.  Since $R$ is Henselian, for
every $n\geq 0$, the pullback map 
$$
H^n_{\text{\'{e}t}}(\SP k,\mathbb{G}_m) 
\to
H^n_{\text{\'{e}t}}(\SP R,\mathbb{G}_m) 
$$
is an isomorphism.  
By 
\cite[Proposition 2.1, p. 93]{BrauerIII},
there is a restriction exact
sequence 
$$
\begin{CD}
0 @>>> \text{Br}(k) @>>> \text{Br}(K) @>>> 
H^1_{\text{\'{e}t}}(\SP
k,\QQ/\ZZ) 
@>>> H^3_{\text{\'{e}t}}(\SP k,\mathbb{G}_m) \dots
\end{CD}
$$
If $k$ has cohomological dimension $\leq 1$, this gives an
isomorphism,
$$
\textbf{Br}(K)\xrightarrow{\cong}
H^1_{\text{\'{e}t}}(\SP
k,\QQ/\ZZ).  
$$
Via the short exact sequence of Abelian groups,
$$
\begin{CD}
0 @>>> (1/d)\ZZ/\ZZ @>>> \QQ/\ZZ @> d >> \QQ/\ZZ @>>> 0,
\end{CD}
$$
the $d$-torsion in the Brauer group is identified with 
$H^1_{\text{\'{e}t}}(\SP
k,(1/d)\ZZ/\ZZ).$  Thus, for every element $\alpha$ of order $d$ in
the Brauer group, the associated torsor over $k$ gives a Galois field
extension $k'/k$ with cyclic Galois group of order $d$.  There is an
associated \'{e}tale extension $R\to R'$ of degree $d$ that is also a
local homomorphism of DVRs.  The pullback of $\alpha$ to $\SP k'$ is
the zero class.  Thus, comparing exact sequences for $R$ and $R'$, the
pullback of $\alpha$ to $R'$ is the zero class.  Thus, the index of
$\alpha$ equals $d$.
\end{proof}

\mni
Let $S$ be a dense open subscheme of the spectrum of the ring of
integers of a number field.  For every parameter datum over $S$, 
by the extension Proposition
\ref{prop-concord} of \cite[Corollary 1.4]{StarrXu}, which in turn
relies on \cite[Section 7]{DenefAxKochen}, 
there exists an integer $p_0$ such
that for 
every closed point $\SP \kappa \to S$ of characteristic $p\geq p_0$,
for every field extension $\kappa\hookrightarrow L$ with $L$ a perfect
PAC field that is nice, for every pair of Henselian DVR over $S$,
$\SP A\to S$ with $\mf{m}_A = \theta A$, resp.  
$\SP R\to S$ with $\mf{m}_R = \pi R$, each having isomorphic 
residue field extensions of $\kappa$,
$\ol{\tau}:A/\mf{m}_A\xrightarrow{\cong} R/\mf{m}_R$, both equal to
$\kappa \hookrightarrow L$,
for every $S$-morphism $z_A: \SP A\to M$, the pullback
$z_A^*\Xx_M$ has an $A$-point if and only if for every
$S$-morphism $z_R:\SP R\to M$, the pullback $z_R^*\Xx_M$ has an
$R$-point.  This uses the following isomorphism of commutative monoids
under multiplication
$\mc{MR}(A) = A/(1+\mf{m}_A)$, resp. $\mc{MR}(B)=B/(1+\mf{m}_B)$, 
given by
$$
\tau_{\theta,\pi}:\mc{MR}(A)\to \mc{MR}(R), \ \
\tau_{\theta,\pi}(\text{mres}(u\theta^n)) = \text{mres}(v\pi^n),
$$ 
where $u\in A\setminus \mf{A}$, resp. $v\in R\setminus\mf{R}$, are units
such that $\ol{\tau}(\ol{u})$ equals $\ol{v}$ as elements in $L$.  

\mni
Assume now that the parameter datum has a codimension $>1$
compactification and has the RSC property.
By the extension of \cite[Corollary 1.13]{StarrXu}, for
$A=L\Sem{t}$, for every morphism $z_A:\SP L\Sem{t}\to M$, $z_A^*\Xx_M$
does have an $A$-point.  Thus, for every Henselian DVR $R$ over $S$
with residue field extension $\kappa\hookrightarrow L$, for every
$S$-morphism $z_R:\SP R\to M$, also $z_R^*\Xx_M$ has an $R$-point.  

\mni
Applying this to the parameter datum from the previous section 
for complete intersections in
projective space, resp. hypersurfaces in Grassmannians (both of which
are defined over $\SP \ZZ$), 
for every $(n;d_1,\dots,d_c)$ with $d_1^2+\dots + d_c^2 < n-1$, 
resp. for every $(n,r,d)$ with $(3r-1)d^2 - d < n-4r-1$, there exists
an integer $p_0$ such that for every $p\geq p_0$,
for every Henselian DVR $R$ with residue field $L$ a perfect PAC field
of characteristic $p$ that contains a primitive root of unity of order
$n$ for every integer $n$ prime to $p$, there is a $K$-point of every 
$(\Xx_K,\Ll_K)$ over
$\SP K$ whose base change to $\ol{K}$ is the common zero scheme in
$\PP^{n-1}_{\ol{K}}$ of hypersurfaces of degrees $(d_1,\dots,d_c)$
together with the restriction of $\OO_{\PP^{n-1}}(1)$, resp. the base
change is isomorphic to a degree $d$ hypersurface in
$\text{Grass}_{\ol{K}}(r,\ol{K}^{\oplus n})$ together with its
Pl\"{u}cker invertible sheaf.  

\mni
Finally, applying this to 
the parameter datum for torsors for the split group of
type $E_8$, there exists an integer $p_0$ such that for every $p\geq
p_0$, for every DVR as above with $L$ of characteristic $p\geq p_0$,
every torsor over $\SP K$ for the split group of type $E_8$ is
trivial.  Since the center of this group is trivial, and since there
are no outer automorphisms for this group, also every torsor for the
automorphism group of this group is a trivial torsor.  Thus every form
of $E_8$ over $\SP K$ is isomorphic to the split form.  Therefore,
every torsor for a group of type $E_8$ over $\SP K$ is a trivial
torsor.  By Lemma \ref{lem-CDtrans} 
also the characteristic $0$ field $K$ has cohomological dimension $2$ and
satisfies Period equals Index.  By \cite[Theorem 1.2]{CTGP}, Serre's
``Conjecture II'' holds for $K$.


\section{Proof of Theorem \ref{thm-C2nicepac}}
\label{sec-proofs2} \marpar{sec-proofs2}

\mni
For $2$-Fano complete intersections in projective space, resp. for low
degree hypersurfaces in Grassmannians, there exists a parameter datum
for these schemes
over $\SP \ZZ$ with a codimension $>1$ compactification, and the
parameter datum satisfies the RSC property, cf. the previous section.
Thus, by Corollary \ref{cor-integral}, every perfect PAC field of
positive characteristic has rational points for these schemes.

\bibliography{my}
\bibliographystyle{alpha}

\end{document}